\documentclass[11pt]{article}
\setlength{\topmargin}{-.5in}
\setlength{\textheight}{9in}
\setlength{\oddsidemargin}{.125in}
\usepackage[utf8]{inputenc}
\setlength{\textwidth}{6.25in}
\usepackage{makeidx,amssymb,amscd}
\usepackage{amsthm}
\usepackage{amsmath}
\usepackage{amsfonts}
\usepackage[toc,page]{appendix}
\usepackage{graphicx}
\usepackage{mathrsfs}
\usepackage[colorlinks]{hyperref}
\newcommand{\R}{\mathbb{R}}

\newtheorem{theorem}{Theorem}
\newtheorem{lemma}{Lemma}
\numberwithin{equation}{section}

\newtheorem{corollary}{Corollary}
\newtheorem{proposition}{Proposition}
\newtheorem{definition}{Definition}

\newtheorem{proof of thm}{Proof of Theorem 1}

\date{\empty}
\begin{document}
\title{Local exponential stabilization for a class of Korteweg-de Vries equations by means of time-varying feedback laws}
\author{Jean-Michel Coron\thanks{Universit\'{e} Pierre et Marie Curie-Paris 6, UMR 7598 Laboratoire Jacques-Louis Lions, 75005 Paris, France. E-mail: \texttt{coron@ann.jussieu.fr}. JMC was supported by ERC advanced grant 266907 (CPDENL) of the 7th Research Framework Programme (FP7).},\;\; Ivonne Rivas\thanks{Universit\'{e} Pierre et Marie Curie-Paris 6, UMR 7598 Laboratoire Jacques-Louis Lions, 75005 Paris, France. E-mail: \texttt{rivas@ann.jussieu.fr}. IR was supported by ERC advanced grant 266907 (CPDENL) of the 7th Research Framework Programme (FP7).},\;\; Shengquan Xiang\thanks{Universit\'{e} Pierre et Marie Curie-Paris 6, UMR 7598 Laboratoire Jacques-Louis Lions, 75005 Paris, France. E-mail: \texttt{shengquan.xiang@ens.fr.}}}
\maketitle
\begin{abstract}
We study the exponential stabilization problem for a nonlinear Korteweg-de Vries equation on bounded interval in cases where the linearized control system is not controllable. The system has Dirichlet boundary conditions at the end-points of the interval, a Neumann nonhomogeneous boundary condition at the right end-point which is the control. We build a class of time-varying feedback laws for  which the solutions of the closed-loop systems with small initial data decay exponentially to $0$. We present also results on the well-posedness of the closed-loop systems for general time-varying feedback laws.
\end{abstract}
\smallskip
\noindent \textbf{Keywords.} Korteweg-de Vries, time-varying feedback laws, stabilization, controllability.

\noindent \textbf{AMS Subject Classification.}
93D15,    	
93D20,   	
35Q53.   	
\section{Introduction}
\label{sec-introduction}
Let $L\in(0,+\infty)$. We consider the stabilization of the following controlled Korteweg-de Vries (KdV) system
\begin{gather}\label{kdv}
\begin{cases}
y_t+y_{xxx}+y_x+y y_x=0 \;\;\;\;\;&\textrm{for}\;\;  (t,x) \in (s,+\infty)\times (0,L), \\
y(t,0)= y(t,L)=0   &\textrm{for}\;\; t \in (s, + \infty),\\
y_x(t,L)=u(t)    &\textrm{for}\;\; t \in (s,+\infty),\\
\end{cases}
\end{gather}
where $s \in \mathbb{R}$ and where,  at time $t \in [ s, +\infty )$, the state is $y(t, \cdot) \in L^2(0,L)$  and the control is $u(t) \in \mathbb{R}$.

Boussinesq in \cite{1877-Boussinesq}, and Korteweg and de Vries in \cite{kdv} introduced the KdV equations for describing the propagation of small amplitude long water waves. For better understanding of KdV, one can see Whitham's book \cite{whitham}, in which different mathematical models of water waves are deduced. These equations have turned out to be good models not only for water waves but also to describe  other physical phenomena. For mathematical studies  on these equations, let us mention the following \cite{bona, constantin, craig, temam} and the references therein as well as the discovery of solitons   and the inverse scattering method \cite{cohen, gardner} to solve these equations. We also refer here to  \cite{bsz, bsz2, coron04, ruz, zhang} for well-posedness results of initial-boundary-value problems of our KdV equation \eqref{kdv} or for other equations which are similar to \eqref{kdv}. Finally, let us refer to \cite{cerpatu, rz} for reviews on recent progresses on the control of various KdV equations.

The controllability research on \eqref{kdv} began in 1997 when Lionel Rosier  showed in \cite{rosier97} that the  linearized KdV control system  (around $0$ in $L^2(0,L)$)
\begin{gather}\label{Lkdv}
\begin{cases}
y_t+y_{xxx}+y_x=0 \;\;\;\;\;&\textrm{in}\;\;  (0,T)\times (0,L), \\
y(t,0)= y(t,L)=0   &\textrm{on}\;\; (0,L),\\
y_x(t,L)=u(t)    &\textrm{on}\;\; (0,T),\\
\end{cases}
\end{gather}
is controllable if and only if $L \notin \mathcal{N}$, where $\mathcal{N}$ is called the set of critical lengths
 and is defined by
\begin{equation}\label{N}
\mathcal{N} := \Big \{    2 \pi \sqrt{\frac{l^2 + lk + k^2}{3}} ;\,  l, k \in \mathbb{N}^{*}   \Big \}.
\end{equation}
 From this controllability result Lionel Rosier, in the same article, deduced that the nonlinear KdV equations \eqref{kdv} are locally controllable (around $0$ in $L^2(0,L)$) if $L \notin \mathcal{N}$. His work also shows that {\color{black} the} $L^2(0,L)$ space can be decomposed as  $H \oplus M$, where $M$ is the ``uncontrollable'' part for the linearized KdV  control systems \eqref{Lkdv}, and $H$ is the ``controllable'' part.  Moreover, $M$ is of finite dimension, a dimension which is strongly depending on some number theory property of the length $L$. More precisely, the dimension of $M$ is  the number of different pairs of positive integers $(l_j,k_j)$ satisfying
\begin{equation}\label{defL}
 L = 2 \pi \sqrt{\frac{l_j^2 + l_jk_j + k_j^2}{3}}.
\end{equation}
 For each such pair of  $(l_j, k_j)$ with $l_j \geqslant k_j$, we can find two nonzero real valued functions $\varphi^j_1$ and $\varphi^j_2$ such that $\varphi^j:=\varphi^j_1 +  i\varphi^j_2$ is a solution of
\begin{gather}\label{varj}
\begin{cases}
-i\omega(l_j, k_j) \varphi^j + \varphi^{j'} +  \varphi^{j'''} = 0, \\
\varphi^j (0) = \varphi^j (L) =0,\\
\varphi^{j'} (0) = \varphi^{j'} (L) =0,
\end{cases}
\end{gather}
where $\varphi^j_1, \varphi^j_2 \in C^{\infty}([0,L])$ and $\omega(l_j, k_j)$ is defined by
\begin{equation}
\label{eq-omega}
  \omega (l_j, k_j) := \frac{(2l_j + k_j)(l_j - k_j)(2k_j + l_j)}{3\sqrt{3}(l_j^2 + l_j k_j + k_j^2)^{3/2}}.
\end{equation}
When $l_j > k_j$, the functions  $\varphi^j_1, \varphi^j_2$ are linearly independent, but when $l_j = k_j$ then $\omega(l_j, k_j)= 0 $ and  $\varphi^j_1, \varphi^j_2$ are linearly dependent. It is also proved in  \cite{rosier97}  that
\begin{equation}\label{M=spanphi}
  M = \textrm{Span} \{ \varphi^1_1, \varphi^1_2, ..., \varphi^n_1, \varphi^n_2 \}.
\end{equation}
Multiplying \eqref{Lkdv} by $\varphi^j $, integrating on $(0, L)$, performing integrations by parts and combining with \eqref{varj}, we get
 \[ \frac{d}{dt} \left( \int_0^L y(t,x) \varphi^j (x) dx \right) = i\omega(l_j, k_j)  \int_0^L y(t,x) \varphi^j (x) dx,  \]
which shows that $M$ is included in the  ``uncontrollable'' part of \eqref{Lkdv}. Let us point out that there exists at most {\color{black} one} pair of $(l_j, k_j)$ such that $l_j = k_j$. Hence we can classify $L \in \mathbb{R}^{+}$ in  5 different cases and therefore divide $\mathbb{R}^{+}$ into five disjoint subsets of $(0,+\infty)$, which are defined as follows:
\begin{enumerate}
\item  $\mathcal{C} := \mathbb{R}^{+}\setminus{\mathcal{N}}$. Then $M = \{ 0 \}$.
\item $\mathcal{N}_1 := \big{\{} L \in \mathcal{N};$ there exists one and only one ordered pair $(l_j, k_j)$ satisfying  \eqref{defL} and one has $l_j = k_j\big{\}}$. Then  the dimension of $M$ is 1.
\item $\mathcal{N}_2 := \big{\{} L \in \mathcal{N};$ there exists one and only one ordered pair  $(l_j, k_j)$ satisfying  \eqref{defL} and one has $l_j > k_j\big{\}}$. Then  the dimension of $M$ is 2.
\item $\mathcal{N}_3 := \big{\{} L \in \mathcal{N};$ there exist $n \geqslant 2$ different ordered pairs $(l_j, k_j)$ satisfying  \eqref{defL}, and none of them satisfies $l_j = k_j\big\}$. Then the dimension of $M$ is $2n$.
\item $\mathcal{N}_4 := \big{\{} L \in \mathcal{N};$ there exist $n \geqslant 2$ different ordered pairs $(l_j, k_j)$ satisfying  \eqref{defL},  and one of them satisfies $l_j = k_j$ $\big{\}}$.  Then  the dimension of $M$ is $2n-1$.
\end{enumerate}
The five sets $\mathcal{C}$, $\{ \mathcal{N}_i \}_{i = 1}^4$ are pairwise disjoint and
\[ \mathbb{R}^{+} = \mathcal{C} \cup \mathcal{N}_1 \cup \mathcal{N}_2 \cup \mathcal{N}_3 \cup \mathcal{N}_4,\,
\mathcal{N}= \mathcal{N}_1 \cup \mathcal{N}_2 \cup \mathcal{N}_3 \cup \mathcal{N}_4. \]
Additionally, Eduardo Cerpa proved that each of these five sets has infinite number of elements:
see \cite[Lemma 2.5]{cerpa07}; see also \cite[Proposition 8.3]{coron} for the case of $\mathcal{N}_1$.

Let us point out that $L \notin \mathcal{N} $ is equivalent to $M = \{  0 \}$. Hence, Lionel Rosier solved the (local) controllability problem of the nonlinear KdV equations for $L \in \mathcal{C}$. Later on Jean-Michel Coron and Emmanuelle Cr\'{e}peau  proved in \cite{coron04} the small-time local controllability of nonlinear KdV equations for the second case $L \in \mathcal{N}_1$, by ``power series expansion'' method,   the nonlinear term $yy_x$ gives this controllability.  Later on, in 2007, Eduardo Cerpa proved the local controllability in large time for the third case $L \in \mathcal{N}_2$ \cite{cerpa07}, still by using the ``power series expansion'' method. In this case, an expansion to the order 2 is sufficient and the local controllability in small time remains open. Finally Eduardo Cerpa  and  Emmanuelle Cr\'{e}peau in \cite{cerpa09} concluded the study by proving the local controllability in large time of \eqref{kdv} for the  two remaining critical cases (for which dim $M \geqslant 3$).  The proof of all these results rely on the ``power series expansion'' method, a method introduced in \cite{coron04}. This method has also been used to prove controllability results for Schr\"{o}dinger equations \cite{2005-Beauchard-JMPA, 2006-Beauchard-Coron-JFA, 2014-Beauchard-Morancey-MCRF, 2014-Morancey-AIHP} and for rapid asymptotic stability of a Navier-Stokes control system in \cite{2016-Chowdhury-Ervedoza-Raymond-preprint}. In this article we use
it to get exponential stabilization of \eqref{kdv}. For studies on the controllability of other KdV control systems problems, let us refer to \cite{rosier-2015-stable, Ludovick,  glass, goubet, rosier04, zhang} and the references therein.
{\color{black}}

The asymptotic stability of $0$ without control (control term equal to 0) has been studied for years, see, in particular, \cite{2013-Cerpa-Coron-IEEE, goubet, jz, mmp, pazoto, zuazua02, rz2, russz, RRz2}. Among which, for example,   the  local exponential stability for our KdV equation if $L \notin \mathcal{N}$ was proved in \cite{zuazua02}. {\color{black} Let also point out  here that in \cite{doronin2014}, the authors give the existence of (large) stationary solutions which ensures that the exponential stability result in \cite{zuazua02} is only local.}

Concerning the stabilization by means of feedback laws, the locally exponentially stabilization with arbitrary decay rate (rapid stabilization) with some linear feedback law was obtained by Eduardo Cerpa and Emmanuelle Cr\'{e}peau in \cite{2009-Cerpa-Crepau-DCDS} for the linear KdV equation \eqref{Lkdv}.
For the nonlinear case, {\color{black} the first rapid stabilization for  Korteweg-de Vries
equations  was obtained in \cite{2010-Laurent-Rosier-Zhang-CPDE} by Camille Laurent, Lionel Rosier and Bing-Yu Zhang in the case of localized distributed control on a periodic domain. In that case the linearized control system, let us write it $\dot y =Ay +Bu$, is controllable. These authors used an 
 approach due to Marshall Slemrod  \cite{1974-Slemrod-SICON} to construct linear feedback laws
  leading to the rapid stabilization of $\dot y =Ay+Bu$ and then proved that the same feedback laws give the rapid stabilization of the nonlinear Korteweg de Vries equation. In the case of distributed control the operator $B$ 
  is bounded. For boundary control the operator $B$ is unbounded. The Slemrod approach has been modified to handle this case by Vilmos Komornik in \cite{1997-Komornik} and by Jose Urquiza in \cite{2005-Urquiza}; and \cite{2009-Cerpa-Crepau-DCDS} precisely
  uses the modification presented in \cite{2005-Urquiza}. However, in contrast with the case of distributed control,  it leads to unbounded linear feedback laws and one does know for the moment if these linear feedback laws lead to asymptotic stabilization for the nonlinear Korteweg de Vries equation. One does not even know if the closed system is well posed for this nonlinear equation.} The first rapid stabilization result in the nonlinear case and with boundary controls was obtained by Eduardo Cerpa and Jean-Michel Coron in \cite{2013-Cerpa-Coron-IEEE}. Their approach relies on the backstepping method/transformation (see \cite{2008-Krstic-Smyshlyaev-book} for an excellent starting point to get inside this method due to Miroslav Krstic and his collaborators). When $L\not \in \mathcal{N}$,  by using a more general transformation and the controllability of \eqref{Lkdv} , Jean-Michel Coron and Qi L\"{u} proved in \cite{coronluqi} the rapid stabilization of our KdV control system. Their method can be applied to many other equations, like  Schr\"{o}dinger equations \cite{2016-Coron-Gagnon-Morancey-preprint} and Kuramoto-Sivashinsky equations \cite{coronluqi2}.
When $L \in \mathcal{N}$,  as mentioned above, the linearized control system \eqref{Lkdv} is not controllable, but the control system \eqref{kdv} is controllable. Let us recall that for the finite dimensional case, the controllability doesn't imply the existence of a (continuous) stationary feedback law which  {\color{black} stabilizes} (asymptotically, exponentially etc.) the control system, see \cite{74, 1990-Coron-SCL}. However the controllability in general implies the existence of (continuous) \textit{time-varying} feedback laws which asymptotically (and even in finite time) stabilize the control system; see \cite{1995-Coron}. Hence it is natural to look for  time-varying feedback laws $u(t, y(t, \cdot))$  such that $0$ is (locally) asymptotically stable for the closed-loop system
\begin{gather}\label{kdv-closed-loop}
\begin{cases}
y_t+y_{xxx}+y_x+y y_x=0 \;\;\;\;\;&\textrm{for}\;\;  (t,x) \in (s,+\infty)\times (0,L), \\
y(t,0)= y(t,L)=0   &\textrm{for}\;\; t \in (s, + \infty),\\
y_x(t,L)=u(t,y(t,\cdot))    &\textrm{for}\;\; t \in (s,+\infty).
\end{cases}
\end{gather}
{\color{black} \label{comment-LRZ-time-varying}
Let us also point out that in  \cite{2010-Laurent-Rosier-Zhang-CPDE}, as in \cite{1994-Coron-Rosier-JMSEC} by Jean-Michel Coron and Lionel Rosier which was dealing with finite dimensional control systems, time-varying feedback laws were used in order to combine two different feedback laws to get rapid \textit{global} asymptotic stability of the closed loop system.} Let us emphasize that $u = 0 $ leads to (local) asymptotic stability when $L \in \mathcal{N}_1$ \cite{coron15} and $L \in \mathcal{N}_2$ \cite{ccst}. However, in both cases, the convergence is not exponential. It is then natural to ask if we can get exponential convergence to $0$ with the help of some suitable time-varying feedback laws $u(t,y(t,\cdot))$. The aim of this paper is to
prove that it is indeed possible in the case where
\begin{equation}\label{Lgood}
\text{$L$ is in $\mathcal{N}_2$ or in $\mathcal{N}_3$.}
\end{equation}
Let us denote by $P_H: L^2(0,L)\to H$ and $P_M: L^2(0,L)\to M$ the orthogonal projection (for the $L^2$-scalar product) on $H$ and $M$ respectively. Our main result is the following one, where the precise definition of a solution of \eqref{kdv-Cauchy} is given in Section \ref{sec-preliminaries}.
\begin{theorem}\label{thm1}
Assume that \eqref{Lgood} holds. Then there exists a periodic time-varying feedback law $u$, $C>0$, $\lambda >0$ and $r > 0$ such that, for every $s \in \mathbb{R}$ and for every $\lVert y_0 \lVert_{L^2_L} < r$, the Cauchy problem
\begin{gather}\label{kdv-Cauchy}
\begin{cases}
y_t+y_{xxx}+y_x+y y_x=0 \;\;\;\;\;&\textrm{for}\;\;  (t,x) \in (s,+\infty)\times (0,L), \\
y(t,0)= y(t,L)=0   &\textrm{for}\;\; t \in (s, + \infty),\\
y_x(t,L)=u(t,y(t,\cdot))    &\textrm{for}\;\; t \in (s,+\infty),\\
y(s,\cdot) = y_0  &\textrm{for}\;\; x \in (0, L){\color{black} ,}
\end{cases}
\end{gather}
has at least one  solution in $C^0([s, +\infty); L^2(0,L)) \cap L^2_{loc}([s, +\infty); H^1(0,L))$ and every solution $y$ of \eqref{kdv-Cauchy} is defined on $[s,+\infty)$ and satisfies, for every $t\in [s, +\infty)$,
\begin{equation}
\label{cvexponentielle}
\lVert P_H (y(t)) \lVert_{L^2_L} + \lVert P_M (y(t)) \lVert_{L^2_L}^{\frac{1}{2}}
\leqslant C e^{- \lambda (t-s)} \big( \lVert P_H(y_0) \lVert_{L^2_L} + \lVert P_M (y_0) \lVert_{L^2_L}^{\frac{1}{2}} \big).
\end{equation}
\end{theorem}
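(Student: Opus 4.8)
The plan is to construct a single $T$-periodic (in time) feedback law $u(t,y)$ that acts differently on the two pieces of Rosier's decomposition $y = P_H y + P_M y =: w + z$, and to establish the decay of the quantity $N(y) := \lVert P_H y \rVert_{L^2_L} + \lVert P_M y \rVert_{L^2_L}^{1/2}$ by proving a strict contraction over one period and then iterating. On the controllable subspace $H$ the linearized system \eqref{Lkdv} is controllable, so I would first take a (stationary) feedback $u_H$, obtained by adapting the backstepping approach of \cite{2013-Cerpa-Coron-IEEE} or the Gramian method of \cite{coronluqi} to the restriction of the dynamics to $H$, that drives $w$ to $0$ exponentially with a rate we may prescribe as large as needed. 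The whole difficulty is concentrated in the finite-dimensional uncontrollable part $M$: by the identity $\frac{d}{dt}\int_0^L y\varphi^j\,dx = i\omega(l_j,k_j)\int_0^L y\varphi^j\,dx$, the control $u$ has no first-order effect on $z$, which merely rotates; since \eqref{Lgood} excludes every pair with $l_j=k_j$, all frequencies $\omega(l_j,k_j)$ are nonzero, so $z$ genuinely oscillates on each direction of \eqref{M=spanphi}, and this oscillation is what I will exploit.

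To steer $z$ I would use the power series expansion / return method of \cite{coron04}, as in \cite{cerpa07,cerpa09}: the only way the control reaches $M$ is through the projection $P_M(y y_x)$ of the quadratic term. I would therefore let the $M$-part of the feedback inject into $w$, via the controllability of $H$, a $T$-periodic ``return'' oscillation whose profile is fixed and whose amplitude is a function $\rho(P_M y)$ scaling like $\lVert P_M y\rVert_{L^2_L}^{1/2}$. Because $P_M(ww_x)$ is quadratic in $w$, an oscillation of amplitude $\sim\lVert z\rVert_{L^2_L}^{1/2}$ produces over one period a net second-order displacement of $z$ of size $\sim\lVert z\rVert_{L^2_L}$, which can be pointed in any prescribed direction of $M$; here the nondegeneracy ensured by $L\in\mathcal{N}_2\cup\mathcal{N}_3$ (all $\omega_j\neq 0$, so that the time-averaged quadratic control-to-$M$ map is onto $M$) is essential. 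Choosing the amplitude and direction of $\rho$ to cancel a fixed fraction of $z$, and the profile so that the first-order trajectory is periodic (leaving no first-order residual in $w$), I would obtain at the end of the period $\lVert z(s+T)\rVert_{L^2_L} \leqslant \gamma_M\lVert z_0\rVert_{L^2_L} + O(\lVert z_0\rVert_{L^2_L}^{3/2})$ and $\lVert w(s+T)\rVert_{L^2_L} \leqslant \gamma_H\lVert w_0\rVert_{L^2_L} + C\lVert z_0\rVert_{L^2_L}$, with $\gamma_H,\gamma_M\in(0,1)$. Combining these yields $N(y(s+T)) \leqslant \gamma\,N(y_0)$ with $\gamma<1$, once $\gamma_H$ and $\gamma_M^{1/2}$ are taken below $\gamma$ and $\lVert y_0\rVert_{L^2_L}$ is small enough to absorb the lower-order terms; this is precisely the mechanism that forces the exponent $\frac12$ onto the $M$-component in \eqref{cvexponentielle}.

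The heart of the work is this one-period contraction. I would expand the closed-loop solution in the small parameter measuring the data size, isolate the first-order (linear) and second-order (quadratic) contributions explicitly through \eqref{varj}--\eqref{eq-omega} and \eqref{M=spanphi}, and then control the remainder uniformly using the well-posedness and Kato-type smoothing estimates for \eqref{kdv} from \cite{bsz,coron04} together with the $C^0([s,+\infty);L^2(0,L))\cap L^2_{loc}([s,+\infty);H^1(0,L))$ energy bounds. The main obstacle I anticipate is making these remainder estimates compatible with the non-Lipschitz amplitude $\rho(z)\sim\lVert z\rVert_{L^2_L}^{1/2}$, and with the bilinear cross terms coupling $w_0$ to the injected oscillation: the square root makes the feedback only H\"older continuous at $z=0$, so I must track powers of $\lVert z_0\rVert_{L^2_L}$ carefully to show that the cubic-order errors are genuinely dominated by the quadratic gain. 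This non-Lipschitz character is also exactly why the statement asserts the existence of \emph{at least one} solution rather than uniqueness.

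Finally, I would establish well-posedness of \eqref{kdv-Cauchy} for this feedback by a fixed-point/compactness argument, treating the feedback term as a forcing and invoking the linear estimates of \cite{bsz,coron04}; continuity, but not Lipschitz continuity, of $u(t,\cdot)$ gives existence through a Schauder-type scheme. Once the per-period contraction $N(y(s+T))\leqslant\gamma\,N(y_0)$ is shown to hold for every admissible solution and every starting time $s$ (using the $T$-periodicity of $u$, so that smallness, and hence the estimate, propagates across periods), I would iterate it over the intervals $[s+kT,s+(k+1)T]$ and combine with a uniform within-period growth bound to obtain \eqref{cvexponentielle} with $\lambda=-\frac1T\ln\gamma$ and a suitable $C$, valid for all $t\geqslant s$ and all $\lVert y_0\rVert_{L^2_L}<r$.
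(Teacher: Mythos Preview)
Your broad strategy---a one-period contraction for a quantity equivalent to $N(y)=\lVert P_Hy\rVert_{L^2_L}+\lVert P_My\rVert_{L^2_L}^{1/2}$, built on a feedback of amplitude $\sim\lVert P_My\rVert_{L^2_L}^{1/2}$ that moves $P_My$ only through the quadratic term---matches the paper. Two points, however, separate your outline from a complete proof.

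\textbf{No feedback on $H$ is used.} The paper does not introduce any $u_H$. It relies instead on the fact that the \emph{free} linear semigroup already contracts on $H$: by Rosier's observability inequality one has $\lVert S(T)y_0\rVert_{L^2_L}^2\leqslant\rho_1\lVert y_0\rVert_{L^2_L}^2$ for all $y_0\in H$ with $\rho_1<1$ (property~$(\mathcal{P}_1)$). The feedback is therefore a function of $P_My$ only (see \eqref{u}). This is simpler than adapting \cite{2013-Cerpa-Coron-IEEE} (which acts at the left Dirichlet end) or \cite{coronluqi} (which needs $L\notin\mathcal{N}$) to the subspace $H$, and it avoids any interaction between a stabilizing $u_H$ and the injected oscillation.

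\textbf{The missing term and the role of a small parameter.} Your per-period estimate for $z=P_My$ reads $\lVert z(T)\rVert\leqslant\gamma_M\lVert z_0\rVert+O(\lVert z_0\rVert^{3/2})$, but it omits the contribution of $P_M(ww_x)$ coming from $w=P_Hy$ itself, which is of order $\lVert w_0\rVert_{L^2_L}^2$ and is \emph{not} lower order. After the square root this yields a term $\sqrt{C}\,\lVert w_0\rVert$ in $N(y(T))$ that competes directly with $\gamma_H\lVert w_0\rVert$; since the constant $C$ comes from the nonlinearity and the Kato smoothing bound, it cannot be made small by shrinking $r$ alone, so the inequality $N(y(T))\leqslant\gamma N(y_0)$ does not close as written. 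The paper resolves this by inserting a small parameter $\varepsilon$ into the feedback amplitude (so $u_\varepsilon\sim\varepsilon\lVert P_My\rVert^{1/2}$) and by proving the contraction for the weighted quantity
\[
V_\varepsilon(y)=\lVert P_Hy\rVert_{L^2_L}^2+\varepsilon\,\lVert P_My\rVert_{L^2_L},
\]
namely $V_\varepsilon(y(T))\leqslant\rho_2\lVert P_Hy_0\rVert^2+\varepsilon(1-\delta\varepsilon^2)\lVert P_My_0\rVert$ (Proposition~\ref{proposition2}). The $\varepsilon$-weight makes the dangerous $C\lVert w_0\rVert^2$ contribution to $\lVert z(T)\rVert$ harmless, since $\varepsilon C\lVert w_0\rVert^2$ is absorbed by $(\rho_2-\rho_1)\lVert w_0\rVert^2$. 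The argument is then split into the two regimes $\lVert y_0^H\rVert\geqslant\varepsilon^{2/3}\lVert y_0^M\rVert^{1/2}$ and $\lVert y_0^H\rVert<\varepsilon^{2/3}\lVert y_0^M\rVert^{1/2}$; only in the second regime is the full second-order expansion carried out. The decay of $V_\varepsilon$ is then converted to \eqref{cvexponentielle}.

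As a minor point, the paper obtains existence for the non-Lipschitz closed loop not by a Schauder argument but by a Carath\'eodory-type delay approximation $y_x(t,L)=u(t,y(t-\tfrac{1}{n}))$ followed by a compactness passage (Theorem~\ref{th-feedback-not-Lipschitz}); this is perhaps more direct than setting up a Schauder fixed point in $\mathcal{B}_{s,T}$.
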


In order to simplify the notations, in this paper we  sometimes simply denote $y(t, \cdot)$ by $y(t)$, if there is no misunderstanding, {\color{black} sometimes we also simply denote $L^2(0, L)$ (resp. $L^2(0, T)$) by $L^2_L$ (resp. $L^2_T$).} Let us explain briefly an important ingredient of our proof of Theorem~\ref{thm1}. Taking into account the uncontrollability
of the linearized system, it is natural to split the KdV system into a coupled system for $(P_H(y), P_M(y))$. Then the finite dimensional analogue of our KdV control system is
 \begin{equation}\label{form-quadratic}
\dot{x} = Ax + R_1 (x,y) + Bu, \;\;\;  \dot{y} = Ly + Q(x,x)+ R_2(x,y),
 \end{equation}
where $A$, $B$, and $L$ are  matrices, $Q$ is a quadratic map, $R_1, R_2$ are polynomials and $u$ is the control. The state variable $x$ plays the role of $P_H(y)$, while $y$ plays the
role of $P_M(y)$. The two polynomials $R_1$ and $R_2$ are quadratic and $R_2(x,y)$ vanishes for $y=0$. For this  ODE system, in many cases the Brockett condition \cite{74} and the Coron condition \cite{coron} for the existence of continuous stationary stabilizing feedback laws do not hold. However, as shown in \cite{2016-Coron-Rivas-preprint}, many physical systems of form \eqref{form-quadratic} can be exponentially stabilized by means of time-varying feedback laws. We follow the construction of these time-varying feedback laws given in this article. However, due to the fact that $H$ is of infinite dimension, many parts of the proof have to be modified compared to {\color{black} those} given in \cite{2016-Coron-Rivas-preprint}. In particular we do not know how to use a Lyapunov approach, in contrast to what is done in \cite{2016-Coron-Rivas-preprint}.

 This article is organized as follows. In Section \ref{sec-preliminaries},  we recall some classical results and definitions {\color{black} about}
 \eqref{kdv} and \eqref{Lkdv}. In Section~\ref{sec-closed-loop-system}, we study the existence and uniqueness of solutions to the closed-loop system \eqref{kdv-Cauchy} with  time-varying feedback laws $u$
which are not smooth. In Section~\ref{sec-contruction-feedbacks}, we  construct our time-varying feedback laws. In Section \ref{sec-proof-th1}, we prove two estimates for solutions to the closed-loop
system \eqref{kdv-Cauchy} (Propositions  \ref{proposition3} and \ref{proposition2}) which imply Theorem \ref{thm1}.  The article ends with three appendices where proofs of propositions used in the main parts of the article are given.

\section{Preliminaries}
\label{sec-preliminaries}
We first recall some results on KdV equations and give the definition of a solution to the Cauchy problem \eqref{kdv-Cauchy}.
Let us start with the nonhomogeneous linear Cauchy problem
\begin{gather}\label{likdv}
\begin{cases}
y_t+y_{xxx}+y_x = \tilde{h}   \;\;\; & \textrm{in} \;\;(T_1, T_2) \times (0, L),\\
y(t,0)= y(t,L)=0  & \textrm{on} \;\;(T_1, T_2), \\
y_x(t,L)= h(t) & \textrm{on} \;\;(T_1, T_2), \\
y(T_1,x)=y_0(x) & \textrm{on} \;\;(0, L),
\end{cases}
\end{gather}
for
\begin{gather}
\label{propertyT1T2}
-\infty< T_1<T_2<+\infty,
\\
\label{popertyy0}
y_0 \in L^2 (0,L),
\\
\label{popertytildeh}
\tilde{h} \in L^1 (T_1,T_2; L^2(0,L)),
\\
\label{popertyh}
h \in L^2(T_1,T_2).
\end{gather}
Let us now give  the definition of a solution to \eqref{likdv}.
\begin{definition}\label{definition2}
 A solution to the Cauchy problem \eqref{likdv} is a function $y \in L^1 (T_1, T_2; L^2(0,L))$  such that, for almost every $\tau \in [T_1,T_2]$ the following holds: for every
 $\phi \in C^3([T_1, \tau] \times [0,L])$ such that
\begin{equation}
\phi (t,0) = \phi (t,L) = \phi_x (t,0) = 0, \;\;\forall t \in [T_1,\tau],
\end{equation}
one has
\begin{align}
-\int_{T_1}^{\tau} \int_0^L &(\phi_t + \phi_x + \phi_{xxx})y dxdt - \int_{T_1}^{\tau} h(t) \phi_x (t,L) dt -\int_{T_1}^{\tau} \int_0^L \phi \tilde h  dxdt  \notag\\
 & + \int_0^L y(\tau,x)\phi(\tau,x) dx - \int_0^L y_0 \phi(T_1,x) dx = 0.
\end{align}
\end{definition}

 For $T_1$ and $T_2$ satisfying \eqref{propertyT1T2}, let us define the linear space $\mathcal{B}_{T_1,T_2}$ by
\begin{equation}\label{defcalB}
\mathcal{B}_{T_1,T_2} := C^0([T_1, T_2]; L^2(0,L)) \cap L^2(T_1, T_2; H^1(0,L)).
\end{equation}
This linear space $\mathcal{B}_{T_1,T_2}$ is equipped with the following norm
\begin{equation}\label{defnormB}
  \lVert y \lVert_{\mathcal{B}_{T_1,T_2}}:=\max \{\lVert y(t) \lVert_{L^2_L};\, t\in [T_1,T_2]\}
  + \left(\int_{T_1}^{T_2}\lVert y_x(t) \lVert_{L^2_L}^2 dt\right)^{1/2}.
\end{equation}
With this norm, $\mathcal{B}_{T_1,T_2}$ is a Banach space.

Let $\mathcal{A}: \mathcal{D}(\mathcal{A})\subset L^2(0,L)\rightarrow L^2(0,L)$ be the linear operator defined by
\begin{gather}\label{def-domain-A}
  \mathcal{D}(\mathcal{A}){\color{black} :=}  \left\{\phi\in H^3(0,L);\,\phi(0)=\phi(L)=\phi_x(L)=0\right\},
  \\
  \label{def-value-A}
  \mathcal{A}\phi:=-\phi_x -\phi_{xxx},\;\; \forall \phi\in \mathcal{D}(\mathcal{A}).
\end{gather}
It is known that both $\mathcal{A}$ and $\mathcal{A}^*$ are closed and dissipative  (see e.g. \cite[page 39]{coron}),  and therefore $\mathcal{A}$ generates a strongly continuous semigroup of contractions $S(t), \, t\in [0,+\infty)$ on $L^2(0,L)$.

In \cite{rosier97}, Lionel Rosier using the above properties of $\mathcal{A}$ together with multiplier techniques proved the following existence and uniqueness result for the Cauchy problem \eqref{likdv}.
\begin{lemma}\label{lem1}
The Cauchy problem \eqref{likdv} has one and only one solution. This solution is in $\mathcal{B}_{T_1,T_2}$ and there exists a constant $C_2>0$ depending only on $T_2-T_1$  such that
\begin{equation}\label{l1}
\lVert y \lVert_{\mathcal{B}_{T_1,T_2}} \leqslant C_2 \left( \lVert y_0 \lVert_{L^2_L} + \lVert h \lVert_{L^2(T_1,T_2)} + \lVert \tilde{h} \lVert_{L^1 (T_1,T_2; L^2 (0,L))} \right).
\end{equation}
\end{lemma}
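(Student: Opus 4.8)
The plan is to combine the semigroup generated by $\mathcal{A}$ with two classical multiplier estimates, and then run an approximation argument. By linearity I split the solution as $y=y^{(1)}+y^{(2)}$, where $y^{(1)}$ carries the datum $y_0$ and the interior source $\tilde h$ with homogeneous boundary control, and $y^{(2)}$ carries the boundary control $h$ with $y_0=0$ and $\tilde h =0$. Since $\mathcal{A}$ generates the contraction semigroup $S(t)$, the first part is given by Duhamel's formula
\[
y^{(1)}(t)=S(t-T_1)y_0+\int_{T_1}^{t}S(t-\tau)\tilde h(\tau)\,d\tau,
\]
which lies in $C^0([T_1,T_2];L^2(0,L))$ with $\lVert y^{(1)}\rVert_{C^0(L^2_L)}\le \lVert y_0\rVert_{L^2_L}+\lVert\tilde h\rVert_{L^1(T_1,T_2;L^2_L)}$ because $S$ is a contraction. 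The whole difficulty is therefore concentrated in the extra spatial regularity $y\in L^2(T_1,T_2;H^1(0,L))$ (the Kato smoothing effect) and in the part $y^{(2)}$, whose control operator is \emph{unbounded}.

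The regularity and the full estimate \eqref{l1} come from two multiplier identities, established first for smooth solutions. Multiplying the equation in \eqref{likdv} by $y$, integrating by parts on $(0,L)$ and using $y(t,0)=y(t,L)=0$ together with $y_x(t,L)=h(t)$ gives the energy identity
\[
\frac{d}{dt}\lVert y(t)\rVert_{L^2_L}^{2}+y_x(t,0)^{2}=h(t)^{2}+2\int_0^L \tilde h\, y\,dx,
\]
from which, dropping the nonnegative trace term and using $\tilde h\in L^1(T_1,T_2;L^2_L)$, a Gronwall argument bounds $\max_{t}\lVert y(t)\rVert_{L^2_L}$ by the data. Multiplying instead by $xy$ and integrating by parts produces
\[
\frac{d}{dt}\int_0^L x\,y(t,x)^{2}\,dx+3\int_0^L y_x(t,x)^{2}\,dx=\int_0^L y(t,x)^{2}\,dx+L\,h(t)^{2}+2\int_0^L x\,\tilde h\, y\,dx,
\]
and integrating in time controls $\int_{T_1}^{T_2}\lVert y_x(t)\rVert_{L^2_L}^{2}\,dt$ by the already-estimated $L^2$ quantities and the data. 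Together these yield \eqref{l1}, with a constant depending only on $T_2-T_1$ (by time-translation invariance and the Gronwall factor $e^{C(T_2-T_1)}$).

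The main obstacle is making these formal computations legitimate for the rough data allowed by \eqref{popertyy0}--\eqref{popertyh}: with $y_0\in L^2$ and $h\in L^2$ only, $y$ has no classical trace and the unbounded boundary term must be given a meaning consistent with Definition~\ref{definition2}. I would handle this by smooth approximation. For smooth compatible data one lifts the boundary condition by a function $w(t,x)$ with $w_x(t,L)=h(t)$ and $w(t,0)=w(t,L)=0$, so that $v:=y-w$ solves a homogeneous-boundary problem with a smooth modified source and is produced by $S(t)$; the resulting $y$ is classical, lies in $C^0([T_1,T_2];\mathcal{D}(\mathcal{A}))$, and all the integrations by parts above are justified, so \eqref{l1} holds. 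Approximating arbitrary $(y_0,\tilde h,h)$ by such smooth data and applying \eqref{l1} to the differences shows that the corresponding solutions are Cauchy in $\mathcal{B}_{T_1,T_2}$; the limit $y\in\mathcal{B}_{T_1,T_2}$ inherits the bound \eqref{l1}. Crucially, the weak formulation of Definition~\ref{definition2} places the boundary trace on the test function, so the term $\int h\,\phi_x(t,L)\,dt$ passes to the limit using only $h\to h$ in $L^2(T_1,T_2)$ and the boundedness of the fixed function $t\mapsto\phi_x(t,L)$; hence the limit $y$ solves \eqref{likdv} in the sense of Definition~\ref{definition2}.

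Finally, uniqueness follows from the adjoint. If $y\in\mathcal{B}_{T_1,T_2}$ solves \eqref{likdv} with $y_0=0$, $\tilde h=0$ and $h=0$, I choose, for a given smooth final profile $\phi_\tau$, the test function $\phi$ solving the backward adjoint equation $\phi_t+\phi_x+\phi_{xxx}=0$ with $\phi(t,0)=\phi(t,L)=\phi_x(t,0)=0$ and $\phi(\tau,\cdot)=\phi_\tau$; these boundary conditions are exactly those of $\mathcal{D}(\mathcal{A}^{*})$, so such $\phi$ is the orbit under the (contraction) semigroup generated by $\mathcal{A}^{*}$ and is smooth. The weak identity then collapses to $\int_0^L y(\tau,x)\phi_\tau(x)\,dx=0$ for a.e. $\tau$, and since the admissible $\phi_\tau$ are dense in $L^2(0,L)$ we conclude $y\equiv 0$. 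Equivalently, \eqref{l1} applied to the difference of two solutions gives uniqueness directly, which closes the argument.
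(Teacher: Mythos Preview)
Your proposal is correct and follows essentially the same approach the paper invokes: the paper does not give its own proof but attributes the result to Rosier \cite{rosier97}, who established it precisely via the semigroup properties of $\mathcal{A}$ together with the energy and $xy$ multiplier identities you wrote down, and the paper notes separately that uniqueness in the weak sense of Definition~\ref{definition2} follows from the classical duality argument (cf.\ \cite[Proof of Theorem 2.37]{coron}) that you reproduced.
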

In fact the notion of solution to the Cauchy problem \eqref{likdv} considered in \cite{rosier97} is a priori stronger than the one we consider here (it is required to be in $C^0([T_1,T_2];L^2(0,L))$. However the uniqueness of the solution in the sense of Definition~\ref{definition2} still follows from classical arguments; see, for example, \cite[Proof of Theorem 2.37, page 53]{coron}.

Let us now turn to the nonlinear KdV equation
\begin{gather}\label{nonlikdv}
\begin{cases}
y_t+ y_{xxx}+ y_x+ y y_x= \tilde{H}   \;\;\; & \textrm{in} \;\;(T_1, T_2) \times (0, L),\\
y(t,0)= y(t,L)=0  & \textrm{on} \;\;(T_1, T_2), \\
y_x(t,L)= H(t) & \textrm{on} \;\;(T_1, T_2), \\
y(T_1,x)=y_0(x) & \textrm{on} \;\;(0, L).
\end{cases}
\end{gather}
Inspired by Lemma~\ref{lem1}, we adopt the following definition.
\begin{definition}\label{definition3}
A solution to \eqref{nonlikdv} is a function $y\in \mathcal{B}_{T_1,T_2}$ which is a solution
of \eqref{likdv} for $\tilde h:=\tilde H-yy_x\in L^1 (T_1,T_2; L^2 (0,L))$ and $h:= H$.
\end{definition}
Throughout this article we will use similar definitions without giving them precisely.
As an example, it will be the case for system \eqref{skdv}.

In \cite{coron04}, Jean-Michel Coron and Emmanuelle Cr\'{e}peau proved the following lemma on the well-posedness
of the Cauchy problem \eqref{nonlikdv} for small initial data.
\begin{lemma}\label{lem2}
There exist $\eta >0$ and $C_3 >0 $ depending on $L$ and $T_2-T_1$ such that, for every $y_0 \in L^2 (0,L)$, every $H \in L^2(T_1,T_2)$ and every $\tilde{H} \in L^1 (T_1,T_2; L^2(0,L)) $ satisfying
\begin{equation}\label{condition}
 \lVert y_0 \lVert_{L^2_L} + \lVert H \lVert_{L^2(T_1,T_2)}
 + \lVert \tilde{H} \lVert_{L^1 (T_1,T_2; L^2 (0,L))}   \leqslant \eta,
\end{equation}
 the Cauchy problem  \eqref{nonlikdv} has a unique solution and this solution satisfies
\begin{equation}\label{nl1}
\lVert y \lVert_{\mathcal{B}_{T_1,T_2}} \leqslant C_3 \big( \lVert y_0 \lVert_{L^2_L} + \lVert H \lVert_{L^2(T_1,T_2)} + \lVert \tilde{H} \lVert_{L^1 (T_1,T_2; L^2 (0,L))} \big).
\end{equation}
\end{lemma}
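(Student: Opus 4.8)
The plan is to construct the solution by a contraction mapping (Banach fixed point) argument, using the linear theory of Lemma~\ref{lem1} to treat the nonlinearity $yy_x$ as a source term. Given data $(y_0,H,\tilde H)$ as in the statement, I would define, for $z\in\mathcal{B}_{T_1,T_2}$, the map $\Gamma(z):=y$, where $y$ is the unique solution furnished by Lemma~\ref{lem1} of the linear Cauchy problem \eqref{likdv} with boundary datum $h:=H$ and source $\tilde h:=\tilde H-z z_x$. For this to make sense one must check that $z z_x\in L^1(T_1,T_2;L^2(0,L))$ whenever $z\in\mathcal{B}_{T_1,T_2}$; this is the only place where the nonlinearity enters, and it is the crux of the argument. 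A fixed point $y=\Gamma(y)$ is then, by Definition~\ref{definition3}, precisely a solution of \eqref{nonlikdv}.

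The key ingredient is therefore a bilinear estimate: there is a constant $C=C(L,T_2-T_1)$ such that, for all $z,w\in\mathcal{B}_{T_1,T_2}$,
\[ \| z w_x \|_{L^1(T_1,T_2;L^2(0,L))} \leqslant C\,\|z\|_{\mathcal{B}_{T_1,T_2}}\,\|w\|_{\mathcal{B}_{T_1,T_2}}. \]
The pointwise-in-time bound $\|z w_x\|_{L^2_L}\leqslant\|z\|_{L^\infty(0,L)}\|w_x\|_{L^2_L}$ together with the one-dimensional Gagliardo--Nirenberg inequality $\|z\|_{L^\infty(0,L)}^2\leqslant C(\|z\|_{L^2_L}\|z_x\|_{L^2_L}+\|z\|_{L^2_L}^2)$ reduces matters to integrating in $t$ powers of $\|z_x(t)\|_{L^2_L}$ and $\|w_x(t)\|_{L^2_L}$ that are strictly below the square. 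The worst term carries a factor $\|z_x\|_{L^2_L}^{1/2}\|w_x\|_{L^2_L}$, and since $1/2+1=3/2<2$ one closes the estimate by H\"older's inequality in time, the surplus integrability producing a positive power of $(T_2-T_1)$ and absorbing the $C^0_tL^2_x$ part through the sup-norm. Writing $z z_x-w w_x=\tfrac12\big((z-w)(z+w)\big)_x$ and applying the same bound yields the companion Lipschitz estimate
\[ \| z z_x - w w_x \|_{L^1(T_1,T_2;L^2(0,L))} \leqslant C\big(\|z\|_{\mathcal{B}_{T_1,T_2}}+\|w\|_{\mathcal{B}_{T_1,T_2}}\big)\|z-w\|_{\mathcal{B}_{T_1,T_2}}. \]

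With these two estimates the fixed point argument is routine. Set $\delta:=\|y_0\|_{L^2_L}+\|H\|_{L^2(T_1,T_2)}+\|\tilde H\|_{L^1(T_1,T_2;L^2_L)}$. Combining the linear estimate \eqref{l1} with the bilinear bound gives $\|\Gamma(z)\|_{\mathcal{B}_{T_1,T_2}}\leqslant C_2(\delta+C\|z\|_{\mathcal{B}_{T_1,T_2}}^2)$, while the Lipschitz estimate gives $\|\Gamma(z)-\Gamma(w)\|_{\mathcal{B}_{T_1,T_2}}\leqslant C_2C\big(\|z\|_{\mathcal{B}_{T_1,T_2}}+\|w\|_{\mathcal{B}_{T_1,T_2}}\big)\|z-w\|_{\mathcal{B}_{T_1,T_2}}$. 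Choosing the radius $R:=2C_2\delta$ and then $\eta$ small enough that $C_2CR\leqslant 1/2$, the map $\Gamma$ sends the closed ball $\{\|z\|_{\mathcal{B}_{T_1,T_2}}\leqslant R\}$ into itself and is a strict contraction there. Banach's fixed point theorem then gives a unique $y$ in this ball with $\Gamma(y)=y$, and $\|y\|_{\mathcal{B}_{T_1,T_2}}\leqslant R=2C_2\delta$ yields \eqref{nl1} with $C_3:=2C_2$.

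The main obstacle, and essentially the only non-bookkeeping step, is the bilinear estimate: verifying that the time-integrability of the nonlinear source closes is exactly where the structure $C^0([T_1,T_2];L^2(0,L))\cap L^2(T_1,T_2;H^1(0,L))$ of $\mathcal{B}_{T_1,T_2}$ is used, and where the sub-quadratic power $3/2$ makes the H\"older step work. A secondary point is that the contraction argument gives uniqueness only within the ball of radius $R$; to obtain uniqueness among \emph{all} solutions of \eqref{nonlikdv} in $\mathcal{B}_{T_1,T_2}$, as claimed, one argues by continuity in time, applying the local Lipschitz estimate on successive short subintervals so that any solution must coincide with the constructed one.
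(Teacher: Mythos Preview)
Your proof is correct and follows the standard approach. The paper does not actually prove Lemma~\ref{lem2} but attributes it to \cite{coron04}; the fixed-point argument you outline is precisely the method used there, and the paper itself records the key bilinear estimate separately as Lemma~\ref{lem7} (with the explicit factor $T^{1/4}$ coming from the H\"older step you describe) and reproduces essentially the same contraction argument in the proof of Lemma~\ref{lem-small} in Appendix~\ref{appendix-proof-th-feedback-Lipschitz}.
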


\section{Time-varying feedback laws and well-posedness of the associated closed-loop system}
\label{sec-closed-loop-system}
Throughout this section $u$ denotes a time-varying feedback law: it is a map from $\R\times L^2(0,L)$ with values into $\R$. We assume that this map is a Carath\'{e}odory map, i.e.  it satisfies the three
following properties
\begin{gather}\label{locallybounded}
\forall R>0, \exists \; C_{B}(R)>0 \text{ such that } \left(\Vert y\Vert_{L^2_L} \leqslant R\Rightarrow
|u(t,y)|\leqslant C_B(R),\;\; \forall t \in \R\right),
\\
\label{measurable}
\text{ $\forall y\in L^2(0,L)$, the function $t\in \R\mapsto u(t,y)\in \R$ is measurable,}
\\
\label{almostcontinuity}
\text{for almost every  $t\in \R$, the function $y \in L^2(0,L) \mapsto u(t,y)\in \R$ is continuous.}
\end{gather}
In this article we always assume that
\begin{gather}\label{CR>1}
C_B(R) \geq 1, \;\; \forall R\in [0,+\infty), \\
\label{Cincreasing}
\text{$R\in [0, +\infty)\mapsto C_B(R)\in \R$ is a non-decreasing function.}
\end{gather}
Let $s\in \R$ and let $y_0\in L^2(0,L)$. We start by giving the definition of a solution to
\begin{gather}\label{kdv-closed-loop-I-sans-y0}
\begin{cases}
y_t+y_{xxx}+y_x+y y_x=0 \;\;\;\;\;&\textrm{for}\;\;  t\in \R, \, x\in  (0,L), \\
y(t,0)= y(t,L)=0   &\textrm{for}\;\; t\in \R ,\\
y_x(t,L)=u(t,y(t,\cdot))    &\textrm{for}\;\;  t\in \R,
\end{cases}
\end{gather}
and to the Cauchy problem
\begin{gather}\label{kdv-closed-loop-I}
\begin{cases}
y_t+y_{xxx}+y_x+y y_x=0 \;\;\;\;\;&\textrm{for}\;\;  t>s, \, x\in  (0,L), \\
y(t,0)= y(t,L)=0   &\textrm{for}\;\; t >s ,\\
y_x(t,L)=u(t,y(t,\cdot))    &\textrm{for}\;\; t>s,
\\
y(s,x)=y_0(x) &\textrm{for}\;\; x \in (0,L),
\end{cases}
\end{gather}
where $y_0$ is a given function in $L^2(0,L)$ and $s$ is a given real number.
\begin{definition}
\label{def-solution-closed-loop}
Let $I$ be an interval of $\R$ with a nonempty interior. A function $y$ is a solution of \eqref{kdv-closed-loop-I-sans-y0} on $I$ if
 $y\in C^0(I;L^2(0,L))$ is such that, for every $[T_1,T_2]\subset I$
with $-\infty< T_1<T_2<+\infty$, the restriction of $y$ to $[T_1,T_2]\times (0,L)$ is a solution of \eqref{nonlikdv} with $\tilde{H}:=0$, $H(t):=u(t,y(t))$ and $y_0:=y(T_1)$. A function $y$ is a solution to the Cauchy problem \eqref{kdv-closed-loop-I} if there exists an interval $I$ with a nonempty interior satisfying
$I\cap (-\infty,s]=\{s\}$ such that $y\in C^0(I; L^2(0,L))$  is a solution of \eqref{kdv-closed-loop-I-sans-y0} on $I$ and satisfies  the initial condition
$y(s)=y_0$ in $L^2(0,L)$. The interval $I$ is denoted  by $D(y)$.
We say that a solution $y$ to the Cauchy problem \eqref{kdv-closed-loop-I} is maximal if, for every solution $z$ to the  Cauchy problem \eqref{kdv-closed-loop-I}
such that
\begin{gather}
\label{Dzplusgrand}
  D(y)\subset D(z), \\
\label{yrestrictionofz}
y(t)=z(t) \text{ for every $t$ in } D(y),
\end{gather}
one has
\begin{equation}\label{Dy=Dz}
D(y)=D(z).
\end{equation}
\end{definition}
Let us now state our theorems concerning the Cauchy problem \eqref{kdv-closed-loop-I}.
\begin{theorem}
\label{th-feedback-Lipschitz}
Assume that $u$ is a Carath\'{e}odory function and  that, for every $R>0$, there exists $K(R)>0$ such that
\begin{equation}\label{lip-condition-R}
\left(\lVert y\rVert_{L^2_L}\leqslant R \text{ and } \lVert z\rVert_{L^2_L}\leqslant R\right)
\Rightarrow
\left(| u(t,y)-u(t,z)|\leqslant K(R) \lVert y- z\rVert_{L^2_L},\;\;\; \forall t\in \R\right).
\end{equation}
Then, for every $s\in \R$ and for every $y_0\in L^2(0,L)$, the Cauchy problem \eqref{kdv-closed-loop-I} has
one and only one maximal solution $y$. If $D(y)$ is not equal to $[s,+\infty)$, there exists $\tau\in \R$ such that
$D(y)=[s,\tau)$ and one has
\begin{equation}\label{divergenceattau}
  \lim_{t\rightarrow \tau^-}\lVert y(t)\rVert_{L^2_L }=+\infty.
\end{equation}
Moreover, if $C_B(R)$ satisfies
\begin{equation}
\label{Clinearlybounder}
  \int_0^{+\infty} \frac{R}{(C_B(R))^2}dR = + \infty,
\end{equation}
then
\begin{equation}
\label{definedpartout}
D(y)=[s,+\infty).
\end{equation}
\end{theorem}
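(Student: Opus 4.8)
The plan is to read \eqref{kdv-closed-loop-I} as a nonlinear equation solved by a Banach fixed point on short time intervals, using the linear theory of Lemma~\ref{lem1} to invert the KdV operator and the Lipschitz hypothesis \eqref{lip-condition-R} to control the feedback. Concretely, for $z\in\mathcal{B}_{T_1,T_2}$ I would set $H_z(t):=u(t,z(t))$ and $\tilde h_z:=-zz_x$, and let $\Gamma(z)$ be the unique solution furnished by Lemma~\ref{lem1} of the linear problem \eqref{likdv} with these data and initial value $y_0$. That $t\mapsto u(t,z(t))$ is measurable and bounded (hence in $L^2$) follows from $z\in C^0(L^2)$ together with the Carath\'eodory assumptions and \eqref{locallybounded}; that $zz_x\in L^1(T_1,T_2;L^2)$ and, crucially, that the associated bilinear map gains a positive power of the interval length is the standard one-dimensional KdV estimate
\[
\int_{T_1}^{T_2}\lVert (zz_x)(t)\rVert_{L^2_L}\,dt \leqslant C\,(T_2-T_1)^{1/4}\,\lVert z\rVert_{\mathcal{B}_{T_1,T_2}}^{2},
\]
obtained from a Gagliardo--Nirenberg bound $\lVert z\rVert_{L^\infty}\leqslant C\lVert z\rVert_{L^2_L}^{1/2}\lVert z_x\rVert_{L^2_L}^{1/2}$ followed by H\"older in time. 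Combining this with $\lVert H_z\rVert_{L^2(T_1,T_2)}\leqslant C_B(\rho)(T_2-T_1)^{1/2}$ and the Lipschitz gain $\lVert H_{z_1}-H_{z_2}\rVert_{L^2(T_1,T_2)}\leqslant K(\rho)(T_2-T_1)^{1/2}\lVert z_1-z_2\rVert_{\mathcal{B}_{T_1,T_2}}$ coming from \eqref{lip-condition-R}, Lemma~\ref{lem1} shows that for a radius $\rho$ comparable to $\lVert y_0\rVert_{L^2_L}$ and for $T_2-T_1$ small enough, $\Gamma$ maps the closed ball of radius $\rho$ in $\mathcal{B}_{T_1,T_2}$ into itself and is a contraction there. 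Its fixed point is a solution in the sense of Definition~\ref{def-solution-closed-loop}, and the same estimate gives local uniqueness. Since $C_B$ and $K$ are nondecreasing (see \eqref{Cincreasing}), the length of this interval depends only, and monotonically, on $\lVert y_0\rVert_{L^2_L}$; this uniform-in-bounded-data local existence time is the fact I reuse below.

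Next I would assemble the maximal solution and prove the blow-up alternative. By local uniqueness any two solutions with the same data $(s,y_0)$ coincide on the intersection of their intervals, so the union of all such solutions is itself a solution, defined on an interval $D(y)$ equal either to $[s,+\infty)$ or to $[s,\tau)$; this yields existence and uniqueness of the maximal solution. Suppose $D(y)=[s,\tau)$ with $\tau<+\infty$ and that \eqref{divergenceattau} fails, i.e. there exist $R>0$ and $t_n\uparrow\tau$ with $\lVert y(t_n)\rVert_{L^2_L}\leqslant R$. Let $\delta(R)>0$ be the uniform local existence time for data of norm $\leqslant R$ produced above. Choosing $n$ with $\tau-t_n<\delta(R)$ and solving \eqref{kdv-closed-loop-I} afresh from $y(t_n)$ yields a solution on $[t_n,t_n+\delta(R)]$ which, by uniqueness, coincides with $y$ on $[t_n,\tau)$ and hence extends $y$ strictly past $\tau$, contradicting maximality. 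Therefore $\lVert y(t)\rVert_{L^2_L}\to+\infty$ as $t\to\tau^-$, which is \eqref{divergenceattau}.

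Finally, under \eqref{Clinearlybounder} I would exclude finite-time blow-up by an energy estimate combined with an Osgood-type comparison. Multiplying the equation by $y$ and integrating by parts on $(0,L)$, using $y(t,0)=y(t,L)=0$ and $y_x(t,L)=u(t,y(t))$, the transport and nonlinear terms vanish and one gets, for a.e.\ $t$,
\[
\frac{d}{dt}\lVert y(t)\rVert_{L^2_L}^2 = u(t,y(t))^2 - y_x(t,0)^2 \leqslant u(t,y(t))^2 \leqslant \big(C_B(\lVert y(t)\rVert_{L^2_L})\big)^2 ,
\]
the last step using \eqref{locallybounded}. Writing $V(t):=\lVert y(t)\rVert_{L^2_L}^2$, this absolutely continuous function satisfies $\dot V\leqslant C_B(\sqrt V)^2$, so integrating from $s$ to any $t\in D(y)$ and changing variables gives $\int_{\lVert y(s)\rVert_{L^2_L}}^{\lVert y(t)\rVert_{L^2_L}} 2\rho\,C_B(\rho)^{-2}\,d\rho\leqslant t-s$. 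If $D(y)=[s,\tau)$ with $\tau<+\infty$, the blow-up alternative forces $\lVert y(t)\rVert_{L^2_L}\to+\infty$, so the left-hand side tends to $\int^{+\infty} 2\rho\,C_B(\rho)^{-2}\,d\rho=+\infty$ by \eqref{Clinearlybounder} (and $C_B\geqslant 1$ by \eqref{CR>1} keeps the integrand integrable near $\rho=\lVert y(s)\rVert_{L^2_L}$), while the right-hand side stays $\leqslant\tau-s<+\infty$: a contradiction. Hence $D(y)=[s,+\infty)$, which is \eqref{definedpartout}.

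I expect the main obstacle to be the rigorous justification of the energy identity at the low regularity of $\mathcal{B}_{T_1,T_2}$: the boundary trace $y_x(t,0)$ is that of a function only $H^1$ in space, and the very existence of $y_x(\cdot,0)\in L^2(T_1,T_2)$ is a hidden-regularity phenomenon. This is exactly what the multiplier method underlying Lemma~\ref{lem1} provides, so I would establish the estimate first for smooth data and pass to the limit, or invoke the a priori bound already present in Rosier's proof of Lemma~\ref{lem1}. Once this estimate is in hand, the clean conceptual point is to recognize that \eqref{Clinearlybounder} is precisely the Osgood condition guaranteeing global existence, which closes the argument.
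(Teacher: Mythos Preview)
Your proposal is correct and follows essentially the same route as the paper: a Banach fixed-point argument on a ball in $\mathcal{B}_{s,s+T(R)}$ using Lemma~\ref{lem1} and the bilinear estimate $\lVert yy_x\rVert_{L^1_TL^2_L}\leqslant c_4 T^{1/4}\lVert y\rVert_{\mathcal{B}}^2$ (stated in the paper as Lemma~\ref{lem7}), the blow-up alternative by restarting from a bounded time near $\tau$, and the Osgood argument from the energy inequality $\dot E\leqslant C_B^2(\sqrt E)$. The paper also justifies the energy identity exactly as you anticipate, by first deriving it for smooth data and passing to the limit via uniqueness. One small point the paper makes explicit and you elide: the contraction only yields uniqueness \emph{within the ball} $\mathcal{B}_1$, so to get unconditional local uniqueness one must check that any solution $y$ satisfies $\lVert y\rVert_{\mathcal{B}_{\tau,\tau+T}}\leqslant 3C_2\lVert y(\tau)\rVert_{L^2_L}$ for $T$ small enough (depending on $y$), which follows from continuity of $t\mapsto \lVert y\rVert_{\mathcal{B}_{\tau,\tau+t}}$.
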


\begin{theorem}
\label{th-feedback-not-Lipschitz}
Assume that $u$ is a Carath\'{e}odory function which satisfies condition \eqref{Clinearlybounder}.
Then, for every $s\in \R$ and for every $y_0\in L^2(0,L)$, the Cauchy problem \eqref{kdv-closed-loop-I} has
at least one maximal solution $y$ such that $D(y) = [s, + \infty)$.
\end{theorem}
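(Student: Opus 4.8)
The plan is to prove existence by a time-delay (frozen-feedback) approximation that replaces the closed loop by a sequence of \emph{open-loop} forced KdV problems, to which the (local-in-time) well-posedness theory behind Lemma~\ref{lem2} applies, and then to pass to the limit by compactness. Fix $s\in\R$, $y_0\in L^2(0,L)$ and an arbitrary horizon $S>s$. For $\epsilon>0$ I would build $y^\epsilon$ on $[s,S]$ step by step on subintervals of length $\leqslant\epsilon$: set $y^\epsilon(t):=y_0$ for $t\leqslant s$, and on each successive step solve the nonhomogeneous nonlinear problem \eqref{nonlikdv} with $\tilde H:=0$ and the \emph{precomputed} boundary datum $h^\epsilon(t):=u(t,y^\epsilon(t-\epsilon))$, which depends only on already-constructed values. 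Thus no feedback loop is present inside a step; measurability of $h^\epsilon$ follows from \eqref{measurable}--\eqref{almostcontinuity} and $|h^\epsilon(t)|\leqslant C_B(\lVert y^\epsilon(t-\epsilon)\rVert_{L^2_L})$ from \eqref{locallybounded}. Each step is solvable by the fixed-point argument of Lemma~\ref{lem2} on a short interval (the smallness requirement \eqref{condition} being replaced by shrinking the step length, using the $\sqrt{\delta}$ gain in the bilinear estimate for $yy_x$), and the step length is controlled by the a priori bound below, so finitely many steps reach $S$ and $y^\epsilon\in\mathcal{B}_{s,S}$ is well defined.

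Next I would derive a uniform a priori bound. Multiplying the equation by $y^\epsilon$ and integrating by parts (using $y^\epsilon(t,0)=y^\epsilon(t,L)=0$) gives $\frac{d}{dt}\lVert y^\epsilon(t)\rVert_{L^2_L}^2 = h^\epsilon(t)^2 - y^\epsilon_x(t,0)^2 \leqslant C_B(\lVert y^\epsilon(t-\epsilon)\rVert_{L^2_L})^2$. Writing $M^\epsilon(t):=\sup_{[s,t]}\lVert y^\epsilon\rVert_{L^2_L}$ and using that $C_B$ is non-decreasing \eqref{Cincreasing}, this yields $M^\epsilon(t)^2 \leqslant \lVert y_0\rVert_{L^2_L}^2 + \int_s^t C_B(M^\epsilon(\tau))^2\,d\tau$, and the Osgood-type condition \eqref{Clinearlybounder} (after the substitution $N=r^2$, $dN=2r\,dr$) prevents $M^\epsilon$ from blowing up on $[s,S]$. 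Hence $\{y^\epsilon\}$ is bounded in $C^0([s,S];L^2(0,L))$ uniformly in $\epsilon$ by some $\rho=\rho(S,\lVert y_0\rVert_{L^2_L})$, and then Lemma~\ref{lem1} bounds it uniformly in $\mathcal{B}_{s,S}$, in particular in $L^2(s,S;H^1(0,L))$.

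Then comes compactness. From the equation $\partial_t y^\epsilon = -y^\epsilon_{xxx}-y^\epsilon_x-\tfrac12\partial_x((y^\epsilon)^2)$ is bounded in $L^2(s,S;H^{-2}(0,L))$, using $H^1(0,L)\hookrightarrow L^\infty(0,L)$ to control $(y^\epsilon)^2$ in $L^2(s,S;L^2(0,L))$. By the Aubin--Lions--Simon lemma, using $H^1(0,L)\hookrightarrow\hookrightarrow L^2(0,L)\hookrightarrow H^{-2}(0,L)$, a subsequence of $\{y^\epsilon\}$ converges strongly in $L^2(s,S;L^2(0,L))$ to some $y$, weakly in $L^2(s,S;H^1(0,L))$, weakly-$*$ in $L^\infty(s,S;L^2(0,L))$, and in $C^0([s,S];H^{-2}(0,L))$; along a further subsequence $y^\epsilon(t)\to y(t)$ in $L^2(0,L)$ for a.e.\ $t$, and the vanishing shift gives $y^\epsilon(\cdot-\epsilon)\to y$ in $L^2(s,S;L^2(0,L))$ as well, hence $y^\epsilon(t-\epsilon)\to y(t)$ in $L^2(0,L)$ for a.e.\ $t$. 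The crucial identification of the boundary datum now uses the Carath\'eodory structure rather than any Lipschitz bound: by the almost-continuity \eqref{almostcontinuity}, $h^\epsilon(t)=u(t,y^\epsilon(t-\epsilon))\to u(t,y(t))$ for a.e.\ $t$, while $|h^\epsilon(t)|\leqslant C_B(\rho)$, so by dominated convergence $h^\epsilon\to u(\cdot,y(\cdot))$ in $L^2(s,S)$.

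Finally I would pass to the limit in the weak formulation of Definition~\ref{definition2} written for each $y^\epsilon$ (with $\tilde h=-y^\epsilon y^\epsilon_x$, $h=h^\epsilon$): the linear terms pass by the weak and strong convergences, the nonlinear contribution rewrites as $-\tfrac12\int\int\phi_x(y^\epsilon)^2$ and passes by strong $L^2(s,S;L^2)$ convergence, the boundary term $-\int h^\epsilon\phi_x(t,L)\,dt$ passes by the $L^2(s,S)$ convergence just obtained, and the endpoint terms pass using the a.e.\ $L^2$ (and $C^0 H^{-2}$) convergence. This shows $y$ solves \eqref{likdv} with $\tilde h=-yy_x$ and $h=u(\cdot,y(\cdot))$, so by Lemma~\ref{lem1} $y\in\mathcal{B}_{s,S}\subset C^0([s,S];L^2(0,L))$, $y(s)=y_0$, and $y$ is a solution of \eqref{kdv-closed-loop-I} on $[s,S]$. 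A diagonal extraction as $S\to+\infty$ yields a solution on $[s,+\infty)$, which is automatically maximal since the constraint $D(z)\cap(-\infty,s]=\{s\}$ forces $D(z)\subset[s,+\infty)$ for every solution $z$ (so no larger domain exists). The main obstacle is precisely the identification of the limiting boundary datum as $u(\cdot,y(\cdot))$: without Lipschitz continuity one cannot argue by contraction as in Theorem~\ref{th-feedback-Lipschitz}, and the time-delay device is what reduces this nonlinear passage to the limit to an a.e.\ $L^2$ convergence of the frozen argument combined with \eqref{almostcontinuity}, the strong $L^2(s,S;L^2)$ compactness from Aubin--Lions--Simon being the enabling technical ingredient.
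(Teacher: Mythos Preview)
Your proposal is correct and follows essentially the same strategy as the paper: a Carath\'{e}odory-type time-delay approximation $y^{\epsilon}$ solving open-loop problems with frozen boundary datum $u(t,y^{\epsilon}(t-\epsilon))$, a uniform $L^2$ bound via the Osgood-type integral condition \eqref{Clinearlybounder}, compactness by Aubin--Lions to get strong $L^2_tL^2_x$ convergence, and identification of the limiting boundary datum using the a.e.-in-$t$ continuity of $y\mapsto u(t,y)$ together with a.e.\ pointwise convergence of the shifted sequence. The only cosmetic differences are that the paper builds $y^n$ directly on $[s,+\infty)$ (avoiding the diagonal extraction) and passes to the limit in the boundary term using weak-$*$ convergence in $L^\infty$ rather than your (slightly stronger) dominated-convergence argument in $L^2$.
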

The proofs of Theorem~\ref{th-feedback-Lipschitz} and  Theorem~\ref{th-feedback-not-Lipschitz} will be given in Appendix~\ref{appendix-proof-th-feedback-Lipschitz}.

We end up this section with the following proposition which gives the expected connection between  the evolution of $P_M(y)$ and $P_H(y)$
 and the fact that $y$ is a solution to \eqref{kdv-closed-loop-I-sans-y0}.
 \begin{proposition}\label{proposition1}
 Let $u: \R\times L^2(0,L)\rightarrow \R$ be a Carath\'{e}odory feedback law. Let $-\infty<s<T<+\infty$, let $y\in\mathcal{B}_{s,T}$ and let
 $y_0\in L^2(0,L)$. Let us denote $P_H(y)$ (resp. $P_M(y)$) by $y_1$ (resp. $y_2$). Then $y$ is a solution
 to the Cauchy problem \eqref{kdv-closed-loop-I} if and only if
 \begin{gather}\label{skdv}
 \left\{
 \begin{array}{l}
 \left\{
 \begin{array}{l}
 y_{1t} + y_{1x}+ y_{1xxx} + P_{H}\big((y_1 +y_2)(y_1 +y_2)_x\big) =0, \\
 y_{1}(t,0) = y_1 (t,L) = 0,\\
 y_{1x}(t,L) = u(t,y_1 +y_2),\\
 y_1(0,\cdot) = P_H(y_0),
 \end{array}
 \right.
 \\
  \left\{
  \begin{array}{l}
 y_{2t} + y_{2x}+ y_{2xxx} + P_{M}\big((y_1 +y_2)(y_1 +y_2)_x\big) =0, \\
 y_{2}(t,0) = y_2 (t,L) = 0,\\
 y_{2x}(t,L) = 0,\\
 y_2(0,\cdot) = P_M(y_0).
 \end{array}
 \right.
 \end{array}
 \right.
 \end{gather}
 \end{proposition}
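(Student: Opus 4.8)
The plan is to prove the two implications separately, after recording the structural properties of $M$ that make the splitting possible. The three facts I would establish first are: (1) every element of $M$ is a smooth function vanishing, together with its first derivative, at both endpoints, since by \eqref{varj} the generators satisfy $\varphi^j(0)=\varphi^j(L)=0$ and $\varphi^{j'}(0)=\varphi^{j'}(L)=0$; (2) $M$ is invariant under $\mathcal{A}=-\partial_x-\partial_{xxx}$ — equivalently $\partial_x+\partial_{xxx}$ maps $M$ into $M$ — because the $\varphi^j$ are eigenfunctions of $\mathcal{A}$, which is precisely the content of \eqref{varj}; (3) consequently $y_2:=P_M(y)$ is, for each $t$, a smooth function of $x$, and both $y_1:=P_H(y)$ and $y_2$ belong to $\mathcal{B}_{s,T}$ whenever $y$ does. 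I would also note that, since elements of $M$ satisfy $\phi(0)=\phi(L)=\phi_x(0)=0$, they are admissible test functions in the sense of Definition~\ref{definition2}, and that they satisfy the extra condition $\phi_x(L)=0$.

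For the implication ``\eqref{skdv} $\Rightarrow$ \eqref{kdv-closed-loop-I}'', I would simply write down the weak formulation of Definition~\ref{definition2} for each of the two subsystems in \eqref{skdv} — with $\tilde h_1:=-P_H((y_1+y_2)(y_1+y_2)_x)$, $h_1:=u(t,y_1+y_2)$ for the first, and $\tilde h_2:=-P_M((y_1+y_2)(y_1+y_2)_x)$, $h_2:=0$ for the second — and add them. Using $P_H+P_M=\mathrm{Id}$ one gets $\tilde h_1+\tilde h_2=-(y_1+y_2)(y_1+y_2)_x=-yy_x$, while $y_1+y_2=y$ and $P_H(y_0)+P_M(y_0)=y_0$; since the control term attached to $h_2$ vanishes, the sum of the two identities is exactly the weak formulation of \eqref{nonlikdv} with $\tilde H=0$ and $H(t)=u(t,y(t))$, that is, the definition of a solution of \eqref{kdv-closed-loop-I}. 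Hence $y=y_1+y_2$ solves the closed-loop system.

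For the converse the key step is to extract the $M$-component equation. Given a solution $y$ of \eqref{kdv-closed-loop-I}, I would test the weak formulation of the full equation against $\phi(t,x)=\theta(t)e(x)$, where $e$ ranges over a basis of $M$ and $\theta\in C^3$. Two simplifications occur: the boundary-control term $\int h(t)\phi_x(t,L)\,dt$ drops out because $e_x(L)=0$; and, by the invariance $\partial_x e+\partial_{xxx}e\in M$ together with integrations by parts producing no boundary contributions (here all four boundary conditions satisfied by $e$ and by $y_2$ are used), every spatial pairing with $y$ collapses to a pairing with $y_2=P_M(y)$. The resulting identity is precisely the weak formulation of the second (the $M$) subsystem of \eqref{skdv} tested against $\theta e$. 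Because this $M$-weak-form depends on a test function $\phi$ only through $P_M\phi$ — the pairings involving only $y_2$ and $\mathcal{A}y_2$, both in $M$, while $H=M^{\perp}$ makes $P_H\phi$ contribute nothing — testing against the functions $\theta e$ already exhausts it, so $y_2$ is a solution of the second subsystem; alternatively, one reduces to the finite-dimensional system of ODEs satisfied by the coordinates $\langle y(t),e\rangle$ and invokes the uniqueness in Lemma~\ref{lem1} together with $S(t)M\subset M$. Subtracting the $M$-identity from the full identity — valid for every admissible $\phi$ — then yields the weak formulation of the first (the $H$) subsystem for $y_1=y-y_2$. The initial conditions $y_1(s)=P_H(y_0)$, $y_2(s)=P_M(y_0)$ follow by applying the continuous projections to $y(s)=y_0$, and the boundary conditions are encoded in the weak formulations, with $y_{2x}(\cdot,L)=0$ automatic from $y_2\in M$.

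The step I expect to be the main obstacle is this $M$-projection in the converse: one must verify that after projecting, the $M$-component genuinely satisfies a \emph{closed} KdV-type equation in the weak sense of Definition~\ref{definition2}, with no residual coupling to the $H$-component through the third-order operator and no surviving boundary term from the control. Both are consequences of the eigenfunction relation \eqref{varj} — that is, of $\mathcal{A}M\subseteq M$ and of the additional Neumann conditions $\varphi^{j'}(0)=\varphi^{j'}(L)=0$ — but making these reductions rigorous at the regularity $\mathcal{B}_{s,T}$, where the trace of $y_x$ at $x=L$ and the integrations by parts exist only in the weak sense, is the delicate part; it is the finite dimensionality and smoothness of $M$ that ultimately makes it manageable.
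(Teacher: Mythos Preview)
Your proposal is correct and follows essentially the same route as the paper's proof. The paper also treats the ``if'' direction as immediate by adding the two weak formulations, and for the converse it projects the test function as $\phi=\phi_1+\phi_2$ with $\phi_2=P_M\phi$, uses $\phi_{2x}(\cdot,L)=0$ to kill the control term and $\phi_{2t}+\phi_{2x}+\phi_{2xxx}\in M$ to replace $y$ by $y_2$ in the spatial pairing, then observes that $\phi_{1x}+\phi_{1xxx}\in H$ (via integration by parts) to pass back to arbitrary admissible $\phi$ and obtain the weak formulation of the $M$-subsystem; subtraction yields the $H$-subsystem. The only cosmetic difference is that you start from test functions of the special form $\theta(t)e(x)$ with $e\in M$ and then argue these suffice, whereas the paper starts from a general $\phi$ and works with its $M$-projection; both arguments rest on the same two facts you isolate, namely the invariance $\mathcal{A}M\subset M$ from \eqref{varj} and the extra Neumann condition at $x=L$ for elements of $M$.
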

 The proof of  this proposition is given in Appendix \ref{sec-appendix-A}.


\section{Construction of time-varying feedback laws}
\label{sec-contruction-feedbacks}
In this section, we construct feedback laws which will lead to the local exponential stability stated in Theorem \ref{thm1}. Let us denote by $M_{\mathbf{1}}$ the set of elements in $M$ having a $L^2$-norm equal to $1$:
\begin{equation}\label{defM1}
M_{\mathbf{1}} := \big{\lbrace} y \in M ; \, \lVert y \lVert_{L^2_L} = 1 \big{\rbrace}.
\end{equation}
Let $M^{j}$ be the linear space generated by $\varphi_1^{j}$ and $\varphi_2^{j}$ for every $j \in \{1, 2, ..., n \}$:
\begin{equation}
M^{j} := \textrm{Span} \{ \varphi_1^{j}, \varphi_2^{j}  \}.
\end{equation}
  The construction of our feedback laws relies on the following proposition.
\begin{proposition}
\label{existence-of-v}\label{prop--2}
There {\color{black} exist}  $T> 0$ and  $v\in L^{\infty} \big( [0,T] \times M_{\mathbf{1}}; \mathbb{R} \big)$ such
that the following  three properties hold.
\begin{itemize}
\item[\label{P1}($\mathcal{P}_1$)]  There exists $\rho_1 \in (0,1)$ such that
\[  \lVert S(T) y_0 \lVert_{L^2(0, L)}^{2}  \leqslant \rho_1 \lVert  y_0 \lVert_{L^2(0, L)}^{2}, \;\; \; \textrm{for every}  \;\;y_0 \in H.\]
\item[\label{P2}($\mathcal{P}_2$)]   For every $y_0 \in M $,
 \[ \lVert S(T) y_0 \lVert_{L^2(0, L)}^{2}  =  \lVert  y_0 \lVert_{L^2(0, L)}^{2}.\]
\item[\label{P3}($\mathcal{P}_3$)]  There exists $C_0 > 0$ such that
\begin{equation}\label{p3lip}
\mid v(t,y) - v(t,z) \mid \leqslant C_0 \lVert  y - z \lVert_{L^2(0, L)}, \;\;\;\; \forall t \in [0,T], \; \forall y,z \in M_{\mathbf{1}}.
\end{equation}
Moreover, there exists $\delta > 0$ such that, for every $z \in M_{\mathbf{1}}$, the solution $(y_1, y_2)$ to the following equation
\begin{gather}\label{homostab}
 \begin{cases}
 y_{1t} + y_{1x}+ y_{1xxx} =0, \\
 y_{1}(t,0) = y_1 (t,L) = 0,\\
 y_{1x}(t,L) = v(t,z),\\
 y_1(0,x) = 0, \\
 y_{2t} + y_{2x}+ y_{2xxx} + P_{M}\big(y_1 y_{1x}\big) =0, \\
 y_{2}(t,0) = y_2 (t,L) = 0,\\
 y_{2x}(t,L) = 0,\\
 y_2(0,x) = 0, \
 \end{cases}
 \end{gather}
satisfy
\begin{equation}
\label{y1nul-y2OK}
y_1 (T) = 0 \;\;\; \textrm{and}\;\;\; \big{<} y_2 (T), S(T) z \big{>}_{L^2(0, L)}  \;<\; -2\delta.
\end{equation}
\end{itemize}
\end{proposition}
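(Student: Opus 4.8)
The plan is to treat ($\mathcal{P}_1$) and ($\mathcal{P}_2$) as spectral facts about the contraction semigroup $S$ and to concentrate the work on ($\mathcal{P}_3$). For ($\mathcal{P}_2$), every $y_0\in M$ is a combination of the functions $\varphi^j_1,\varphi^j_2$, which by \eqref{varj} satisfy $\varphi^j(0)=\varphi^j(L)=\varphi^j_x(0)=\varphi^j_x(L)=0$; hence $M$ is invariant under $S$, and in the energy identity $\frac{d}{dt}\lVert S(t)y_0\rVert_{L^2_L}^2=-|\partial_x(S(t)y_0)(0)|^2$ the boundary term vanishes along trajectories in $M$, so $\lVert S(t)y_0\rVert_{L^2_L}=\lVert y_0\rVert_{L^2_L}$ for all $t$ and all $y_0\in M$, which is ($\mathcal{P}_2$) for every $T$. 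For ($\mathcal{P}_1$) I would use the same identity together with Rosier's observability inequality on $H$: there exist $T>0$ and $c>0$ with $\int_0^T|\partial_x(S(t)y_0)(0)|^2\,dt\geqslant c\lVert y_0\rVert_{L^2_L}^2$ for $y_0\in H$ (its proof rests on the fact that the only $y_0$ with $\partial_x(S(\cdot)y_0)(0)\equiv 0$ lie in $M$, combined with a compactness--uniqueness argument). Integrating the energy identity then gives $\lVert S(T)y_0\rVert_{L^2_L}^2\leqslant(1-c)\lVert y_0\rVert_{L^2_L}^2$, i.e. ($\mathcal{P}_1$) with $\rho_1=1-c$. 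I then fix $T$ large enough for this and for the resonance argument below.

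For ($\mathcal{P}_3$) the first step is to reduce the inner-product condition to an explicit quadratic functional of $y_1$. The control does not reach the $M$-component of the linear equation: integrating by parts against $\varphi^j$ and using \eqref{varj}, all boundary terms (including the one carrying $y_{1x}(t,L)$) cancel, so $\frac{d}{dt}\langle y_1,\varphi^j\rangle=-i\omega(l_j,k_j)\langle y_1,\varphi^j\rangle$ and $P_M(y_1)(t)=0$ for all $t$; thus $y_1(T)=0$ is equivalent to $P_H(y_1)(T)=0$, which is reachable by the controllability of \eqref{Lkdv} on $H$ (and realizable with the estimates of Lemma~\ref{lem1}). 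Next, writing $y_2(T)=-\int_0^T S(T-\tau)P_M\big(y_1y_{1x}\big)\,d\tau\in M$ by Duhamel and using that $S$ is unitary on $M$, an integration by parts in $x$ gives
\begin{equation*}
\langle y_2(T),S(T)z\rangle_{L^2_L}=\tfrac12\int_0^T\!\!\int_0^L y_1^2(\tau,x)\,\partial_x\big(S(\tau)z\big)(x)\,dx\,d\tau .
\end{equation*}
Hence ($\mathcal{P}_3$) amounts to choosing, for each $z\in M_{\mathbf 1}$, a control $v(\cdot,z)$ steering $y_1$ from $0$ back to $0$ and making this functional $<-2\delta$, uniformly in $z$ and Lipschitz in $z$.

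The construction I would use is a ``slotted'' one designed to kill cross terms. Split $[0,T]$ into finitely many subintervals $[t_{k-1},t_k]$ and let $w_k$ be a control supported in $[t_{k-1},t_k]$ steering the linearized system from $0$ to $0$ on that slot; setting $v(t,z)=\sum_k\chi_k(z)\,w_k(t)$ with constants $\chi_k(z)$ keeps $y_1(T)=0$ by linearity, and, since the supports are disjoint, makes the functional split without cross terms as $\tfrac12\sum_k\chi_k(z)^2 F_k(z)$, where $F_k(z)=\int_{t_{k-1}}^{t_k}\!\int_0^L (y_1^{(k)})^2\,\partial_x(S(\tau)z)\,dx\,d\tau=\langle f_k,z\rangle$ is real-linear in $z$ with $f_k\in M$. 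If the $w_k$ can be chosen so that the vectors $f_k$ positively span $M$ (their convex hull contains a ball $\bar B(0,2c_0)$), then for every unit $z$ some $F_k(z)<-2c_0$, so the open sets $\{z\in M_{\mathbf 1}:F_k(z)<-2c_0\}$ cover the compact set $M_{\mathbf 1}$. Taking a Lipschitz partition of unity $\{\chi_k\}$ subordinate to this cover yields $\langle y_2(T),S(T)z\rangle\leqslant -c_0\sum_k\chi_k(z)^2\leqslant -c_0/N=:-2\delta$ (with $\delta$ chosen slightly smaller for strictness), while the Lipschitz estimate \eqref{p3lip} follows from that of the $\chi_k$ since the $w_k\in L^\infty$.

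The main obstacle, and the point where \eqref{Lgood} ($L\in\mathcal{N}_2\cup\mathcal{N}_3$) really enters, is producing controls $w_k$ whose second-order motions $f_k$ positively span $M$. Here I would realize each $y_1^{(k)}$, through the controllability of \eqref{Lkdv} on $H$, as an approximately time-harmonic profile $g_k(\tau)\psi_k(x)$ with $\psi_k\in H$ and temporal frequency tuned to a half-resonance $\omega(l_j,k_j)/2$ of a chosen mode; then $y_1^2$ resonates with the rotating factor $S(\tau)z$, and over a sufficiently long slot $\langle f_k,z\rangle$ is dominated by the $\varphi^j$-component of $z$, with sign and phase selected through $g_k$ and $\psi_k$. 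The genuinely delicate points are that the coupling coefficients $\int_0^L\psi_k^2\,\partial_x\varphi^j\,dx$ be nonzero, so that motion in the mode can actually be generated, and that by varying the phase and the excited mode one obtains enough vectors $f_k$ to positively span all of $M$; this is exactly the second-order ``power series expansion'' analysis of Cerpa and of Cerpa--Cr\'epeau, whose non-degeneracy relies on the number-theoretic structure separating $\mathcal{N}_2$ from $\mathcal{N}_3$. I expect the bookkeeping needed to obtain the uniform lower bound $2\delta$ together with the exact return condition $y_1(T)=0$ to be the most technical part of the argument.
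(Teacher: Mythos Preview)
Your treatment of ($\mathcal{P}_1$) and ($\mathcal{P}_2$) matches the paper's exactly, and your slotted construction for ($\mathcal{P}_3$)---controls $w_k$ with disjoint time supports so that $y_1^2$ has no cross terms and the quadratic functional decouples---is precisely the paper's idea. Your Duhamel reformulation $\langle y_2(T),S(T)z\rangle=\tfrac12\int_0^T\!\int_0^L y_1^2\,\partial_x(S(\tau)z)\,dx\,d\tau$ is correct and a clean way to see why the slot trick works.

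The genuine difference is in how you generate the positively spanning family $\{f_k\}$. Your half-resonance heuristic (time-harmonic $y_1^{(k)}$ at frequency $\omega_j/2$) is suggestive but is not how the paper proceeds, and controllability on $H$ does not by itself give control over the full time profile of $y_1^{(k)}$. The paper instead invokes the Cerpa/Cerpa--Cr\'epeau result \emph{once} per mode $M^j$ to produce a single control $u_0^j$ on a short interval $[0,T_0]$ with $y_1(T_0)=0$ and $y_2(T_0)=\psi_1^j\in M^j\setminus\{0\}$, and then obtains the remaining directions by \emph{time-translating the same control} to later slots: running $u_0^j$ on $[\tau,\tau+T_0]$ and freely evolving to $T$ produces $S(T-\tau-T_0)\psi_1^j$, and since $S|_{M^j}$ is a rotation of period $p^j$, shifts of $0,\,p^j/4,\,p^j/2,\,3p^j/4$ yield four vectors at right angles which positively span the two-dimensional $M^j$. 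This rotation-by-time-translation trick is the concrete replacement for your phase-tuning idea and is where the hypothesis $L\in\mathcal{N}_2\cup\mathcal{N}_3$ (each $M^j$ two-dimensional with nonzero $\omega_j$) is really used. For the final assembly the paper writes $-S(T)z=\sum_{i,j}\alpha_i^j\psi_i^j$ with $\alpha_i^j\geqslant 0$ and $\alpha_1^j\alpha_3^j=\alpha_2^j\alpha_4^j=0$, sets $v(t,z)=\sum\alpha_i^j u_i^j(t)$, and reads off $y_2(T)=\sum(\alpha_i^j)^2\psi_i^j$; your Lipschitz partition-of-unity packaging would also work here, but the paper's explicit nonnegative decomposition is more direct and makes the Lipschitz estimate \eqref{p3lip} immediate.
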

\begin{proof}[Proof of Proposition~\ref{existence-of-v}]
Property \hyperref[P2]{($\mathcal{P}_2$)} is given in \cite{rosier97}, one can also see  \eqref{SrotationonM} and \eqref{SrotationonM^j}. Property \hyperref[P1]{($\mathcal{P}_1$)} follows from the dissipativity of $\mathcal{A}$ and the controllability of
\eqref{Lkdv} in $H$ (see also \cite{zuazua02}). Indeed, integrations by parts (and simple density arguments) show that,
in the distribution sense in $(0,+\infty)$,
\begin{equation}\label{coro1}
\frac{d}{d t} \lVert S(t)y_0 \lVert_{L^2_{L}}^2 = - y_x^2 (t,0).
\end{equation}
Moreover, as Lionel Rosier proved in \cite{rosier97}, for every $T>0$, there exists $c> 1$ such that, for every $y_0 \in H$,
\begin{equation}\label{coro2}
\lVert y_0 \lVert_{L^2_{L}}^2 \leqslant c \lVert y_x(t, 0) \lVert_{L^2(0, T)}^2.
\end{equation}
Integration of identity \eqref{coro1} on $(0,T)$ and the use of \eqref{coro2} give
\begin{equation}
\lVert S(T)y_0 \lVert_{L^2_L}^2 \leqslant  \frac{c-1}{c}\lVert y_0 \lVert_{L^2_L}^2.
\end{equation}
Hence $\rho _1:=(c-1)/c\in (0,1) $ satisfies the required properties.

Our concern now is to deal with \hyperref[P3]{($\mathcal{P}_3$)}.  Let us first recall a result on
the controllability of the linear control system
\begin{gather}\label{Lkdv-new}
\begin{cases}
y_t+y_{xxx}+y_x=0 \;\;\;\;\;&\textrm{in}\;\;  (0,T)\times (0,L), \\
y(t,0)= y(t,L)=0   &\textrm{on}\;\; (0,L),\\
y_x(t,L)=u(t)    &\textrm{on}\;\; (0,T),\\
\end{cases}
\end{gather}
where, at time $t\in[0,T]$  the state is $y(t, \cdot) \in L^2(0,L)$. Our goal is to investigate the cases where $L\in \mathcal{N}_2\cup \mathcal{N}_3$, but in order to explain more clearly our construction of $v$, we first deal with the case where
\begin{equation}\label{choice-L}
L = 2 \pi \sqrt{\frac{1^2 + 1\times 2 + 2^2}{3}}=2 \pi \sqrt{\frac{7}{3}},
\end{equation}
which corresponds to $l=1$ and $k=2$ in \eqref{N}.
In that case the uncontrollable subspace  $M$ is a two dimensional vector subspace of $L^2(0,L)$  generated by
\begin{gather}
\varphi_1 (x)= C\left(\cos\left(\frac{5}{\sqrt{21}}x\right)-3\cos\left(\frac{1}{\sqrt{21}}x\right)+2\cos\left(\frac{4}{\sqrt{21}}x\right)\right),\notag\\
\varphi_2 (x)= C\left(-\sin\left(\frac{5}{\sqrt{21}}x\right)-3\sin\left(\frac{1}{\sqrt{21}}x\right)+2\sin\left(\frac{4}{\sqrt{21}}x\right)\right), \notag
\end{gather}
 where $C$ is a positive constant such that $\lVert \varphi_1 \lVert_{L^2_L} = \lVert \varphi_2 \lVert_{L^2_L} = 1$. They satisfy
\begin{gather}\label{varphi}
\begin{cases}
\displaystyle  \varphi_1' + \varphi_1''' = -\frac{2\pi}{p} \varphi_2, \\
\varphi_1(0) = \varphi_1 (L) = 0, \\
\varphi_1' (0) = \varphi_1' (L) = 0,
\end{cases}
\end{gather}
and
\begin{gather}\label{varphi2}
\begin{cases}
\displaystyle \varphi_2' + \varphi_2''' =\frac{2\pi}{p} \varphi_1, \\
\varphi_2(0) = \varphi_2 (L) = 0, \\
\varphi_2' (0) = \varphi_2' (L) = 0,
\end{cases}
\end{gather}
with  (see  \cite{cerpa07})
\begin{equation}\label{valuep}
p:= \frac{441\pi }{10 \sqrt{21}}.
\end{equation}
 For every $t>0$, one has
\begin{equation}\label{SrotationonM}
S(t)M\subset M \text{ and  $S(t)$ restricted to $M$ is the rotation  of angle } \frac{ 2\pi t}{p},
\end{equation}
if the orientation on $M$ is chosen so that $(\varphi_1,\varphi_2)$ is a direct basis,
a choice which is done from now on.  Moreover the control $u$ has no action on $M$ for the linear control system \eqref{Lkdv}:
{\color{black} for every initial data $y_0\in M$, whatever is $u\in L^2(0,T)$, the solution $y$ of \eqref{Lkdv} with $y(0)=y_0$ satisfies $P_M( y(t))= S(t)y_0$, for every $t\in[0,+\infty)$.}
Let us denote by $H$ the orthogonal in $L^2(0,L)$ of $M$ for the $L^2$-scalar product $H:= M^{\perp}$. This linear space
is left invariant by the linear control system \eqref{Lkdv}: for every initial data $y_0\in H$, whatever is $u\in L^2(0,T)$, the solution $y$ of \eqref{Lkdv} satisfying $y(0)=y_0$ is such that $y(t)\in H$, for every $t\in[0,+\infty)$. Moreover,
as it is proved by Lionel Rosier  in \cite{rosier97},  the linear control system \eqref{Lkdv} is controllable in $H$ in small-time. More precisely, he proved the following lemma.
\begin{lemma}\label{lem4}
Let $T>0$. There exists $C>0$ depending only on $T$  such that, for every $y_0$, $y_1 \in H$, there exists {\color{black} a} control $u \in L^2(0,T)$ satisfying
\begin{equation}
\lVert u\lVert_{L^2_T} \leqslant C ( \lVert y_0 \lVert_{L^2_L} + \lVert y_1 \lVert_{L^2_L} ),
\end{equation}
such that the solution $y$ of the Cauchy problem
\begin{gather*}
\begin{cases}
y_t+y_{xxx}+y_x = 0 \;\;\; & \textrm{in} \;\;(0, T) \times (0, L),\\
y(t,0)= y(t,L)=0 & \textrm{on} \;\;(0, T), \\
y_x(t,L)= u(t) & \textrm{on} \;\;(0, T),
\\
y(0,x) = y_0(x) & \textrm{on} \;\;(0, L),
\end{cases}
\end{gather*}
 satisfies $y(T,\cdot) = y_1$.
\end{lemma}

 A key ingredient of our construction of $v$  is the following proposition.
\begin{proposition}\label{prop-order-2-H1-au-lieu-de-L2}
Let $T > 0$. For every $L \in \mathcal{N}_2 \cup \mathcal{N}_3$, for every $j \in \{1, 2, ..., n \}$, there exists $u^j \in H^1 (0,T)$ such that
\begin{equation*}
 \alpha(T, \cdot) = 0 \;\; \;\textrm{and} \;\;\;  P_{M^{j}} (\beta (T, \cdot)) \neq  0,
\end{equation*}
where $\alpha$ and $\beta$ are the solution of
\begin{gather}\label{cerab}
 \begin{cases}
 \alpha_{t} + \alpha_{x}+ \alpha_{xxx} =0, \\
 \alpha(t,0) = \alpha (t,L) = 0,\\
 \alpha_{x}(t,L) = {\color{black} u^j(t)},\\
 \alpha(0,x) = 0, \\
 \beta_{t} + \beta_{x}+ \beta_{xxx} + \alpha \alpha_x  =0, \\
 \beta(t,0) = \beta (t,L) = 0,\\
 \beta_x(t,L) = 0,\\
 \beta(0,x) = 0. \
 \end{cases}
 \end{gather}
\end{proposition}
Proposition \ref{prop-order-2-H1-au-lieu-de-L2} is due to Eduardo Cerpa and Emmanuelle Cr\'{e}peau if one requires only $u$ to be in $L^2(0,T)$ instead of being in $H^1(0,T)$: see \cite[Proposition 3.1]{cerpa07} and \cite[Proposition 3.1]{cerpa09}. We explain in Appendix~\ref{sec-appendix-C} how to modify the proof of \cite[Proposition 3.1]{cerpa07}  (as well as \cite[Proposition 3.1]{cerpa09}) in order to get Proposition \ref{prop-order-2-H1-au-lieu-de-L2}.

We decompose $\beta$ by $\beta = \beta_1 + \beta_2$, where $\beta_1 := P_H (\beta)$ and $\beta_2:= P_M(\beta)$. Hence, similarly to   Proposition~\ref{proposition1}, we get
\begin{gather}\label{cerb2}
 \begin{cases}
 \beta_{2t} + \beta_{2x}+ \beta_{2xxx} +P_M \big( \alpha \alpha_x \big) =0, \\
 \beta_2(t,0) = \beta_2 (t,L) = 0,\\
 \beta_{2x}(t,L) = 0,\\
 \beta_2(0,x) = 0,
 \end{cases}
\end{gather}
where $\beta_2 (T,\cdot) = P_M (\beta (T, \cdot)) \neq  0$. In particular,  $P_{M^j} (\beta_2 (T,\cdot)) = P_{M^j} (\beta (T, \cdot)) \neq  0$.\\

 Combining \eqref{cerab} and \eqref{cerb2}, we get:
\begin{corollary}\label{corollary3}
For every $L \in \mathcal{N}_2 \cup \mathcal{N}_3$, for every $ T_0 > 0$, for every $j \in \{1, 2, ..., n\}$,  there exists $u_0^j \in L^{\infty} (0,T_0)$ such that the solution $(y_1,y_2)$ to equation \eqref{homostab} with $v(t,z) := u_0^j(t)$ satisfies
\begin{equation}\label{first}
y_1 (T_0) = 0 \;\;\;\; \textrm{and} \;\;\;\; P_{M^j} (y_2(T_0)) \neq 0.
\end{equation}
\end{corollary}

Now we come back to the case when \eqref{choice-L} holds.  Let us fix $T_0>0$ such that
\begin{equation}\label{T0<p/4}
T_0<\frac{p}{4}.
\end{equation}
Let
\begin{equation}\label{defq}
q:=\frac{p}{4}.
\end{equation}
Let $u_0$ be as in Corollary \ref{corollary3}.  We denote by
\begin{equation}
Y_1(t):= y_1(t), \;\;Y_2(t):= y_2(t), \textrm{ for } t\in  [0, T_0],
\end{equation}
and
\begin{equation}\label{defpsi1}
\psi_1:=Y_2(T_0) \in M\setminus\{0\}.
\end{equation}
Let
\begin{gather}
\label{defpsii}
\psi_2=S(q)\psi_1\in M,\;\;\; \psi_3=S(2q)\psi_1\in M,\;\;\;\psi_4=S(3q)\psi_1 \in M,
\\
\label{defT}
  T:=3q+T_0,
\\
\label{defK1}
K_1 := [3q,3q+ T_0],
\\
\label{defK2}
K_2 := \left[2q,2q+ T_0\right],
\\
\label{defK3}
K_3 := \left[q,q+ T_0\right],
\\
\label{defK4}
K_4 := \left[0,T_0\right].
\end{gather}
Note that \eqref{T0<p/4} implies that
\begin{equation}\label{disjoint}
\text{$K_1$, $K_2$, $K_3$ and $K_4$ are pairwise disjoint.}
\end{equation}
Let us define four functions $[0,T]\rightarrow \R$:   $u_{1}$, $u_{2}$, $u_{3}$ and $u_4$ by requiring that, for every $i\in\{1,2,3,4\}$,
\begin{gather}
u_{i} :=
\left \{
\begin{array}{ll}
 0 \;\; & \text{ on } [0,T]\setminus K_i, \\
u_0(\cdot - \tau_i) &   \text{ on }K_i,
\end{array}
\right.
\end{gather}
with
\begin{equation}\label{deftaui}
\tau_1=3q, \,\tau_2=2q, \, \tau_3=q,\tau_4=0.
\end{equation}
One can easily verify that, for every $i \in \{1, 2, 3, 4\}$, the solution of \eqref{homostab} {\color{black} for $v = u_i$} is given explicitly by
\begin{gather}\label{yi1tki}
y_{i, 1} (t) =
\left \{
\begin{array}{ll}
0 \;\; & \text{ on } [0,T]\setminus K_i, \\
Y_1(\cdot - \tau_i) &   \text{ on }K_i,
\end{array}
\right.
\end{gather}
and
\begin{gather}\label{yi2tki}
y_{i, 2} (t) =
\left \{
\begin{array}{ll}
0 \;\; & \text{ on } [0,\tau_i], \\
Y_2(\cdot - \tau_i) &   \text{ on }K_i,\\
S(\cdot- \tau_i- T_0) \psi_1 &  \text{ on }[\tau_i + T_0, T].
\end{array}
\right.
\end{gather}

For $z\in M_1$, let $\alpha_1$, $\alpha_2$, $\alpha_3$ and $\alpha_4$ in $[0,+\infty)$ be such that
\begin{gather}
\label{S(-T)}
-S(T)z = \alpha_1\psi_1+ \alpha_2\psi_2+\alpha_3\psi_3+ \alpha_4\psi_4,
\\
\alpha_1 \alpha_3=0,\, \alpha_2\alpha_4=0.
\end{gather}
Let us  define
\begin{equation}
\label{defv}
{\color{black} v(t, z):= \alpha_1 u_1(t)+ \alpha_2 u_{2}(t)+ \alpha_{3} u_{3}(t)+ \alpha_{4} u_{4}(t).}
\end{equation}
We notice that
\begin{equation}\label{alphasum}
(\alpha_1^2+ \alpha_2^2+ \alpha_3^2+ \alpha_4^2) \lVert \psi_1 \lVert_{L^2_{L}}^2 = 1,
\end{equation}
{\color{black} which, together with \eqref{defv}, implies that}
\begin{equation}
v \in L^{\infty} ([0, T] \times M_1; \mathbb{R}).
\end{equation}
Moreover, using the above construction (and in particular \eqref{disjoint}), one easily  checks that the solution of \eqref{homostab} satisfies
\begin{gather}
\label{y1(t)}
y_1(t) = \alpha_1 y_{1, 1}(t) + \alpha_2 y_{2, 1}(t)+ \alpha_3 y_{3, 1}(t)+ \alpha_4 y_{4, 1}(t), \text{ for } t \in [0, T],
\\
\label{y2(t)}
y_2(t) = \alpha_1^2 y_{1, 2}(t) + \alpha_2^2 y_{2, 2}(t)+ \alpha_3^2 y_{3, 2}(t)+ \alpha_4^2 y_{4, 2}(t), \text{ for } t \in [0, T].
\end{gather}
In particular
\begin{gather}
\label{y1(T)=0}
y_1(T) = 0,
\\
\label{y2(T)bon}
y_2(T) = \alpha_1^2 \psi_1+ \alpha_2^2 \psi_2 +\alpha_3^2 \psi_3+\alpha_4^2 \psi_4.
\end{gather}
From \eqref{S(-T)}, \eqref{alphasum} and \eqref{y2(T)bon}, we can find that  \eqref{y1nul-y2OK} holds if $\delta>0$ is small enough. It is easy to check that the Lipschitz condition \eqref{p3lip} is also satisfied.  This completes the construction of $v(t,z)$ such that  \hyperref[P3]{($\mathcal{P}_3$)} holds and also the proof of
Proposition~\ref{existence-of-v} if \eqref{choice-L} holds.

 For other values of $L\in \mathcal{N}_2$, only the values of $\varphi_1$, $\varphi_2$ and $p$ {\color{black} have} to be modified. For $L\in \mathcal{N}_3$, as mentioned in the introduction, $M$ is now of dimension $2n$ where $n$ is the number of ordered pairs. It is proved in \cite{cerpa09} that (compare with \eqref{varphi}--\eqref{SrotationonM}), by a good choice of order on $\{ \varphi^j \}$ one can assume
\begin{equation}\label{pdiff}
0 < p^1 < p^2 < ... < p^n,
\end{equation}
where $p^j := 2\pi / \omega^j$. For every $t > 0$, one has
\begin{equation}\label{SrotationonM^j}
S(t)M^j \subset M^j \text{ and  $S(t)$ restricted to $M^j$ is the rotation  of angle } \frac{ 2\pi t}{p^j}.
\end{equation}

From \eqref{pdiff}, \eqref{SrotationonM^j} and Corollary \ref{corollary3}, one can get the following corollary (see also \cite[Proposition 3.3]{cerpa09}):
\begin{corollary}\label{coro-only M^j not 0}
For every $L \in \mathcal{N}_3$, there exists $T_L > 0$ such that, for every $j \in \{1, 2, ..., n\}$,  there exists $u_0^j \in L^{\infty} (0,T_L)$ such that the solution $(y_1,y_2)$ to equation \eqref{homostab} with $v(t,z) := u_0^j(t)$ satisfies
\begin{equation}
y_1 (T_L) = 0 \;\;\;\; \textrm{and} \;\;\;\; y_2(T_L) = \varphi_1^j.
\end{equation}
\end{corollary}

Let us define
\begin{equation}
\psi_1^j := \varphi_1^j, \;\;\;\psi_2^j := S(q^j) \varphi_1^j, \;\;\;\psi_3^j := S(2q^j) \varphi_1^j,\;\;\; \psi_4^j := S(3q^j) \varphi_1^j,
\end{equation}
where $q^{j}:= p^{j}/4$.

Compare with \eqref{defpsi1}--\eqref{yi2tki}, we can find $T > T_L$ and closed interval sets $\{ K^j_i \}$, where $i \in \{1, 2, 3, 4\}$ and $j \in \{1, 2, ..., n\}$, such that
\begin{gather}
K^j_i \subset [0, T], \\
\{  K_i^j   \} \text{ are pairwise disjoint.}
\end{gather}
We can also find functions $\{ u^j_i \} \in L^{\infty}([0, T]; \mathbb{R})$, with
\begin{equation}
u^j_i(t) \text{ supports on } K^j_i,
\end{equation}
such that when we define the control as $u^j_i$, we get the solution of \eqref{homostab} satisfies
\begin{gather}
y^j_{i, 1}(t) \text{ supports on } K^j_i, \\
y^j_{i, 1}(T) = 0,\\
y^j_{i, 2}(T) = \psi_i^j.
\end{gather}

Then for $z \in M_1$, let $\alpha^j_i$ in $[0, + \infty)$ be such that (where $i \in \{1, 2, 3, 4\}$ and $j \in \{1, 2, ..., n\}$)
\begin{gather}
-S(T)z = \sum_{i, j} \alpha^j_i \psi^j_i, \\
\alpha^j_1 \alpha^j_3= 0, \;\;\;\alpha^j_2 \alpha^j_4= 0,\;\;\;    \sum_{i, j} (\alpha^j_i)^2 = 1.
\end{gather}
Let us define
\begin{equation}
v(t, z):= \sum_{i, j} \alpha_i^j u_i^j(t).
\end{equation}
Then the solution of \eqref{homostab} with control defined as $v(t, z)$ satisfies
\begin{gather}
y_1(T) = 0,\\
y_2(T) = \sum_{i, j} (\alpha_i^j)^2 \psi^j_i.
\end{gather}
One can easily verify that condition \eqref{y1nul-y2OK} holds when $\delta > 0$ is small enough, and that Lipschitz condition \eqref{p3lip} also holds. {\color{black} This completes the construction of $v(t, z)$ and the proof of Proposition~\ref{prop--2}.}
\end{proof}

We are now able to define the periodic time-varying feedback laws $u_{\varepsilon} : \mathbb{R} \times L^2(0,L) \rightarrow \mathbb{R}$,  which will lead to the exponential stabilization of \eqref{kdv}.  For $\varepsilon > 0$, we define $u_{\varepsilon}$ by
\begin{equation}\label{u}
u_{\varepsilon}\big{|}_{[0,T) \times L^2_L} (t,y) :=
\left \{  \begin{array}{ll}
0 & \textrm{if} \;\;\lVert y^M \lVert_{L^2_L} =0, \\
\varepsilon \sqrt{\lVert y^M \lVert_{L^2_L}} v\big( t, \frac{S(-t)y^M}{\lVert y^M \lVert_{L^2_L}} \big)\;\;\;\;\;  & \textrm{if}\;\; 0 < \lVert y^M \lVert_{L^2_L} \leqslant 1, \;\;\;\;\;\;\;\;\;\;\;\;\;\;\;\;\\
\varepsilon  v\big( t, \frac{S(-t)y^M}{\lVert y^M \lVert_{L^2_L}} \big) & \textrm{if} \;\;\lVert y^M \lVert_{L^2_L}  > 1,
\end{array} \right.
\end{equation}
with $y^M:= P_M(y)$,  and
\begin{equation}\label{uu}
u_{\varepsilon}(t,y) := u_{\varepsilon}\big{|}_{[0,T) \times L^2_L}  (t-[\frac{t}{T}]T,  y), \;\;\; \forall t \in \mathbb{R},\;\;\; \forall y \in L^2(0, L).
\end{equation}

\section{Proof of Theorem \ref{thm1}}
\label{sec-proof-th1}

Let us first point out that Theorem~\ref{thm1} is a consequence of  the  following two propositions.
\begin{proposition}\label{proposition3}
There exist $\varepsilon_1 >0$, $r_1 >0$ and $C_1$ such that, for every Carath\'{e}odory feedback law $u$ satisfying
\begin{equation}\label{utye}
 |u(t,z)| \leqslant \varepsilon_1 \textrm{min} \{ 1, \sqrt{\lVert P_M (z) \lVert_{L^2_L}} \},
 \, \;\;\forall t\in \R,\, \forall z \in L^2(0,L),
\end{equation}
for every $s\in \R$ and for every maximal solution $y$ of \eqref{kdv-closed-loop-I-sans-y0} defined at time $s$  and satisfying
$\lVert y(s) \lVert_{L^2_L} < r_{1} $, $y$ is well-defined on $[s,s+T]$ and one has
\begin{equation}\label{aprio}
\lVert  P_H(y)\lVert^2_{\mathcal{B}_{s,s+T}} +\lVert  P_M(y)\lVert_{\mathcal{B}_{s,s+T}} \leqslant  C_1 (\lVert P_H(y(s)) \lVert^2_{L^2_L} +\lVert P_M(y(s)) \lVert_{L^2_L}).
\end{equation}
\end{proposition}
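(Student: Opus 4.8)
The plan is to run a continuation (bootstrap) argument over one period, using the decomposition of Proposition~\ref{proposition1} together with the linear estimate of Lemma~\ref{lem1}. Write $y_1:=P_H(y)$ and $y_2:=P_M(y)$, so that $(y_1,y_2)$ solves \eqref{skdv}, and set $a:=\lVert P_H(y(s))\rVert_{L^2_L}$, $b:=\lVert P_M(y(s))\rVert_{L^2_L}$ and $D:=a^2+b$. Since $H\perp M$ we have $a^2+b^2\leqslant \lVert y(s)\rVert_{L^2_L}^2<r_1^2$, so $a<r_1$, $b<r_1$ and $D<r_1^2+r_1$. For $\tau\in(s,s+T]$ with $[s,\tau]\subset D(y)$, put $X(\tau):=\lVert y_1\rVert_{\mathcal{B}_{s,\tau}}$, $Y(\tau):=\lVert y_2\rVert_{\mathcal{B}_{s,\tau}}$ and $E(\tau):=X(\tau)^2+Y(\tau)$; note $E(\tau)\to D$ as $\tau\to s^+$. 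The goal is to prove $E\leqslant C_1 D$ on $[s,s+T]$, which is exactly \eqref{aprio}.

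First I would record the two quantitative inputs. Applying Lemma~\ref{lem1} to each subsystem of \eqref{skdv} on $[s,\tau]$, with a constant $C_2$ taken uniform for interval lengths in $(0,T]$, gives
\begin{gather*}
X\leqslant C_2\big(a+\lVert u(\cdot,y)\rVert_{L^2(s,\tau)}+\lVert P_H\big((y_1+y_2)(y_1+y_2)_x\big)\rVert_{L^1(s,\tau;L^2_L)}\big),\\
Y\leqslant C_2\big(b+\lVert P_M\big((y_1+y_2)(y_1+y_2)_x\big)\rVert_{L^1(s,\tau;L^2_L)}\big).
\end{gather*}
For the boundary term I use \eqref{utye}: since $P_M(y_1+y_2)=y_2$, one has $|u(t,y)|\leqslant\varepsilon_1\sqrt{\lVert y_2(t)\rVert_{L^2_L}}$, whence $\lVert u(\cdot,y)\rVert_{L^2(s,\tau)}\leqslant \varepsilon_1\sqrt{T}\,\sqrt{Y}$. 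For the $H$-forcing I use the standard KdV bilinear estimate $\lVert ww_x\rVert_{L^1(L^2_L)}\leqslant C\lVert w\rVert_{\mathcal{B}}^2$ (the one underlying Lemma~\ref{lem2}), which yields a bound $C(X+Y)^2$. The crucial point is the $M$-forcing: because the functions $\varphi^j_i$ spanning $M$ are smooth and $w:=y_1+y_2$ vanishes at $0$ and $L$, an integration by parts gives $\int_0^L ww_x\,\varphi^j_i\,dx=-\tfrac12\int_0^L w^2(\varphi^j_i)_x\,dx$, so $\lVert P_M(ww_x)\rVert_{L^2_L}\leqslant C_M\lVert w\rVert_{L^2_L}^2$ with no loss of derivative; integrating in time gives a bound $C_MT\,(X+Y)^2$. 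Thus, with fixed constants $C_a,C_b$ and $C_4:=C_2\varepsilon_1\sqrt{T}$,
\begin{gather*}
X\leqslant C_2 a+C_4\sqrt{Y}+C_b(X+Y)^2,\qquad
Y\leqslant C_2 b+C_a(X+Y)^2.
\end{gather*}

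The bootstrap then proceeds as follows. Fix a constant $K$ and assume $E\leqslant KD\leqslant1$ on $[s,\tau]$; this makes $X\leqslant\sqrt{KD}$ and $Y\leqslant KD$ as small as desired by taking $r_1$ small. The main obstacle is that the coefficient $C_a$ from the $M$-nonlinearity is fixed (possibly large) and multiplies $(X+Y)^2\ni X^2$, while $X^2$ is itself of order $E$; a naive substitution $E=X^2+Y$ double-counts $X^2$ and does not close. I therefore decouple the two inequalities. Using $(X+Y)^2\leqslant2X^2+2Y^2$ and absorbing $2C_bX^2\leqslant\tfrac12X$ (legitimate once $4C_b\sqrt{KD}\leqslant1$) and $2C_aY^2\leqslant\tfrac12Y$ (once $4C_aKD\leqslant1$), they become $X\leqslant2C_2 a+2C_4\sqrt{Y}+4C_bY^2$ and $Y\leqslant2C_2 b+4C_aX^2$. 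Substituting the first into the second and keeping the harmless quartic term $Y^4$ gives $Y\leqslant2C_2 b+48C_aC_2^2a^2+48C_aC_2^2\varepsilon_1^2T\,Y+192C_aC_b^2Y^4$; here I choose $\varepsilon_1$ small so that $48C_aC_2^2\varepsilon_1^2T\leqslant\tfrac14$ and use $Y^4\leqslant(KD)^3Y$ with $r_1$ small to absorb the last two terms, concluding $Y\leqslant C_9 D$ for a constant $C_9$ independent of $\varepsilon_1$. Feeding this back yields $X^2\leqslant C\,D$, hence $E\leqslant C_1 D$ with $C_1$ bounded as $\varepsilon_1\to0$. Choosing $K:=2C_1$, I have established the strict improvement $E\leqslant\tfrac12KD$ under the hypothesis $E\leqslant KD$. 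Since $E(s^+)=D<KD$ and $\tau\mapsto E(\tau)$ is continuous and nondecreasing, a standard open–closed argument upgrades this to $E\leqslant\tfrac12KD$ on all of $[s,s+T]$; the resulting uniform bound, together with the local well-posedness of Lemma~\ref{lem2} (uniform existence time for data of size $\lesssim\sqrt{KD}$ and controls of size $\leqslant\varepsilon_1$), prevents the maximal solution from terminating before $s+T$, so $[s,s+T]\subset D(y)$ and \eqref{aprio} holds with $C_1=\tfrac12K$.
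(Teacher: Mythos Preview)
Your argument is correct and follows the same overall route as the paper: apply Lemma~\ref{lem1} to each component of \eqref{skdv}, bound the control by $\varepsilon_1\sqrt{\lVert y_2\rVert}$ via \eqref{utye}, and close the resulting coupled inequalities for $\lVert y_1\rVert_{\mathcal{B}}$ and $\lVert y_2\rVert_{\mathcal{B}}$ by taking $\varepsilon_1$ and $r_1$ small. The tactical differences are minor. First, the paper avoids your continuation/bootstrap step entirely: since \eqref{utye} also gives $|u|\leqslant\varepsilon_1$, Theorem~\ref{th-feedback-not-Lipschitz} (condition~\eqref{Clinearlybounder} is trivially satisfied) yields existence on $[s,s+T]$, and then Lemma~\ref{lem2} applied with $H(t)=u(t,y(t))$ and $\lVert H\rVert_{L^2_T}\leqslant\varepsilon_1\sqrt{T}$ gives directly a small uniform bound $\lVert y\rVert_{\mathcal{B}}\leqslant 2C_3(r_1+\varepsilon_1^2TC_3)$, which is then fed into the component estimates; this is a bit cleaner than running the open--closed argument. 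Second, where you exploit the smoothness of the $\varphi^j_i$ via the integration by parts $\int ww_x\varphi=-\tfrac12\int w^2\varphi_x$ to bound $P_M(ww_x)$ without derivatives, the paper instead uses the standard bilinear estimate (Lemma~\ref{lem7}) for both projections and invokes finite-dimensionality of $M$ only through $\lVert y_2\rVert_{H^1_L}\leqslant C_5\lVert y_2\rVert_{L^2_L}$ to control $\lVert y_2\rVert_{\mathcal{B}}$ by $\lVert y\rVert_{\mathcal{B}}$. Your observation is a pleasant shortcut but not essential; either implementation closes.
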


\begin{proposition}\label{proposition2}
 {\color{black} For $ \rho_1$ as in Proposition~\ref{prop--2},  let $\rho_2 > \rho_1$.} There exists $\varepsilon_0 \in (0,1) $ such that, for every $\varepsilon \in (0, \varepsilon_0)$, there exists $r_{\varepsilon}>0$ such that,  for every  solution $y$
 to \eqref{kdv-closed-loop-I-sans-y0} on $[0,T]$,  for the feedback law $u:=u_{\varepsilon}$ defined in \eqref{u} and \eqref{uu}, and  satisfying $\lVert y(0) \lVert_{L^2_L} < r_{\varepsilon} $, one has
\begin{equation}\label{maines}
\lVert P_H(y(T)) \lVert^2_{L^2_L} +\varepsilon \lVert P_M (y(T)) \lVert_{L^2_L}  \leqslant \rho_2
\lVert P_H(y(0)) \lVert^2_{L^2_L} +\varepsilon (1-\delta \varepsilon^2) \lVert P_M( y(0)) \lVert_{L^2_L}.
\end{equation}
\end{proposition}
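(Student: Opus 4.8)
The plan is to set $y_1:=P_H(y)$ and $y_2:=P_M(y)$, so that $(y_1,y_2)$ solves the coupled system \eqref{skdv} of Proposition~\ref{proposition1}, and to abbreviate $a:=\lVert y_1(0)\rVert_{L^2_L}$, $b:=\lVert y_2(0)\rVert_{L^2_L}$, both small. I would estimate $\lVert y_1(T)\rVert_{L^2_L}^2$ and $\lVert y_2(T)\rVert_{L^2_L}$ separately over the single period $[0,T]$ and then add the results with weight $\varepsilon$. The a priori bounds of Proposition~\ref{proposition3} (available once $r_\varepsilon<r_1$) give $\lVert y_1\rVert_{\mathcal B_{0,T}}\le C\sqrt{a^2+b}$ and $\lVert y_2\rVert_{\mathcal B_{0,T}}\le C(a^2+b)$, which let me treat the quadratic coupling $P_\bullet\big((y_1+y_2)(y_1+y_2)_x\big)$ perturbatively (a power-series-expansion analysis, a Lyapunov approach being unavailable here, as noted in the introduction). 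The organizing principle is that $M$ carries no control and $S(t)|_M$ is the isometry of \eqref{SrotationonM}, so $\lVert y_2(t)\rVert_{L^2_L}$ stays within $O(a^2+b)$ of $b$ and $S(-t)P_M(y(t))=y_2(0)+O(a^2+b)$; writing $z_0:=y_2(0)/b\in M_{\mathbf 1}$, the genuine feedback $u_\varepsilon$ of \eqref{u}--\eqref{uu} therefore coincides, up to higher-order corrections, with the open-loop control $\varepsilon\sqrt b\,v(\cdot,z_0)$ driving the reference system \eqref{homostab}. Let $(\bar y_1,\bar y_2)$ denote the solution of \eqref{homostab} associated with $z=z_0$.

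For the $H$-component I would apply Duhamel's formula to the first block of \eqref{skdv}. Its free part contracts by $(\mathcal P_1)$: $\lVert S(T)y_1(0)\rVert_{L^2_L}^2\le\rho_1 a^2$. The action of the control on $y_1$ is, to leading order, $\varepsilon\sqrt b$ times the time-$T$ value of the first reference component, which vanishes by \eqref{y1nul-y2OK}: $\bar y_1(T)=0$. Thus the control injects no $H$-energy at first order, and the remaining contributions — the quadratic term $P_H\big((y_1+y_2)(y_1+y_2)_x\big)$ and the discrepancy between $u_\varepsilon$ and $\varepsilon\sqrt b\,v(\cdot,z_0)$ — are of higher order. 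Expanding $\lVert y_1(T)\rVert_{L^2_L}^2$ and decomposing $y_1$ into its free part (size $a$), its control part (size $\varepsilon\sqrt b$) and a quadratic remainder, every cross term with $S(T)y_1(0)$ carries at least one extra factor among $a$, $\sqrt b$, $\varepsilon$; each is therefore absorbable, once $r_\varepsilon$ and $\varepsilon$ are small, yielding
\begin{equation*}
\lVert y_1(T)\rVert_{L^2_L}^2\le \rho_1 a^2 + C\big(r_\varepsilon a^2 + \varepsilon^2 b\big).
\end{equation*}

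The $M$-component is the crux, and it is here that the sign in \eqref{y1nul-y2OK} enters. By $(\mathcal P_2)$ one has $\lVert S(T)y_2(0)\rVert_{L^2_L}=b$, so the linear flow produces no decay and the decay must come from the quadratic source. Writing $y_2(T)=S(T)y_2(0)+w_2$ with $w_2:=-\int_0^T S(T-s)P_M\big((y_1+y_2)(y_1+y_2)_x\big)\,ds$, the dominant pieces of $w_2$ are $\varepsilon^2 b\,\bar y_2(T)$, generated by the control part of $y_1$, and the self-interaction $I_{HH}$ of the free part of $y_1$, which satisfies $\lVert I_{HH}\rVert_{L^2_L}\le Ca^2$. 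Using $S(T)y_2(0)=b\,S(T)z_0$ and the sign $\langle \bar y_2(T),S(T)z_0\rangle_{L^2_L}<-2\delta$ of \eqref{y1nul-y2OK}, expanding the square gives
\begin{equation*}
\lVert y_2(T)\rVert_{L^2_L}^2\le b^2\big(1-4\delta\varepsilon^2\big) + 2b\,\langle S(T)z_0, I_{HH}\rangle_{L^2_L} + (\text{h.o.}) \le b^2(1-4\delta\varepsilon^2) + C\big(a^2 b + a^4\big) + (\text{h.o.}).
\end{equation*}
To convert this into a bound on $\lVert y_2(T)\rVert_{L^2_L}$ itself I would use $\sqrt{X+Y}\le\sqrt X+Y/(2\sqrt X)$ with $X=b^2(1-4\delta\varepsilon^2)\ge b^2/2$, which separates the genuine decay from an $O(a^2)$ remainder: in the regime $a^2\le b$ one has $a^4\le a^2 b$ and obtains $\lVert y_2(T)\rVert_{L^2_L}\le(1-2\delta\varepsilon^2)b + Ca^2$, while in the regime $a^2>b$ the whole right-hand side above is $O(a^4)$, so $\lVert y_2(T)\rVert_{L^2_L}\le Ca^2$ directly; in both cases $\lVert y_2(T)\rVert_{L^2_L}\le(1-2\delta\varepsilon^2)b + Ca^2$.

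Finally I would combine the two estimates. Multiplying the $M$-bound by $\varepsilon$ and adding the $H$-bound gives
\begin{equation*}
\lVert y_1(T)\rVert_{L^2_L}^2+\varepsilon\lVert y_2(T)\rVert_{L^2_L}\le \big(\rho_1+Cr_\varepsilon+C\varepsilon\big)a^2+\varepsilon(1-2\delta\varepsilon^2)b + C\varepsilon^2 b + (\text{h.o.}),
\end{equation*}
where, crucially, the $O(a^2)$ remainder from the $M$-part appears multiplied by $\varepsilon$ and is thus absorbed, for $\varepsilon$ small, into the margin $(\rho_2-\rho_1)a^2$ opened by the choice $\rho_2>\rho_1$; the term $C\varepsilon^2 b$ and all higher-order contributions are swallowed by the surplus $\delta\varepsilon^3 b$ between $1-2\delta\varepsilon^2$ and the target $1-\delta\varepsilon^2$, once $r_\varepsilon$ is small enough (of order a fixed power of $\delta\varepsilon$). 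The quantifiers are fixed in the order: first $\varepsilon_0$ small so that $\rho_1+C\varepsilon_0<\rho_2$ and $\sqrt{1-4\delta\varepsilon^2}\le 1-\delta\varepsilon^2$, then, for each $\varepsilon\in(0,\varepsilon_0)$, the radius $r_\varepsilon$ small enough that every error term lies below the available budgets; this gives \eqref{maines}. The main obstacle is exactly this error control: one must rigorously justify replacing the state-dependent feedback $u_\varepsilon$ by the open-loop control $\varepsilon\sqrt b\,v(\cdot,z_0)$ (via the near-invariance of $S(-t)P_M(y(t))$) and show that every discarded quadratic interaction is of strictly higher order, so that the designed first-order decrease $-4\delta\varepsilon^2 b^2$ of the $M$-energy and the contraction $\rho_1$ of the $H$-energy both survive, with the potentially dangerous $H\!\to\!M$ self-interaction $I_{HH}$ safely charged to the $H$-budget.
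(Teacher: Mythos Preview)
Your overall architecture is exactly that of the paper: decompose $y$ into $(y_1,y_2)=(P_H y,P_M y)$, compare $y_1$ with $S(\cdot)y_0^H+\varepsilon\sqrt b\,\bar y_1$ and $y_2$ with $S(\cdot)y_0^M+\varepsilon^2 b\,\bar y_2$, use $\bar y_1(T)=0$ and the sign in \eqref{y1nul-y2OK}, and absorb the $O(a^2)$ leakage from the $M$-estimate into the $(\rho_2-\rho_1)a^2$ margin after multiplication by $\varepsilon$. The mechanisms you name are the right ones.

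There is, however, a genuine bookkeeping gap. In your combined inequality you carry a term $C\varepsilon^2 b$ coming from the $H$-estimate and then assert that it ``is swallowed by the surplus $\delta\varepsilon^3 b$ \dots once $r_\varepsilon$ is small enough''. But $C\varepsilon^2 b$ does not depend on $r_\varepsilon$, and $C\varepsilon^2\le\delta\varepsilon^3$ is impossible for small $\varepsilon$; no choice of $r_\varepsilon$ rescues this. The trouble originates in your $H$-estimate $\lVert y_1(T)\rVert^2\le\rho_1 a^2+C(r_\varepsilon a^2+\varepsilon^2 b)$: without further structure the quadratic source $P_H((y_1+y_2)(y_1+y_2)_x)$ contributes to $w_1(T)$ a piece of size $C\lVert y_1+y_2\rVert_{\mathcal B}^2$, which at this level is only $O(a^2+b)$, and after squaring/cross terms produces $b$-contributions not dominated by $\delta\varepsilon^3 b$.

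The paper closes this gap by two devices you do not use. First, it imposes the explicit scale relation $r_\varepsilon<\varepsilon^{12}$, which turns every factor $\sqrt r$ into a high power of $\varepsilon$. Second, and crucially, it splits into the two regimes $\lVert y_0^H\rVert\ge\varepsilon^{2/3}\sqrt{\lVert y_0^M\rVert}$ and $\lVert y_0^H\rVert<\varepsilon^{2/3}\sqrt{\lVert y_0^M\rVert}$ (not merely $a^2\lessgtr b$). In the first regime one does not invoke the reference solution at all: the crude estimates \eqref{a}--\eqref{b} already give \eqref{maines}. In the second regime the inequality $a<\varepsilon^{2/3}\sqrt b$ forces $\lVert y_2(t)-S(t)y_0^M\rVert\le C\varepsilon^{4/3}b\ll b$ (this is \eqref{close}), which is what makes the replacement of $u_\varepsilon$ by $\varepsilon\sqrt b\,v(\cdot,z_0)$ quantitatively valid and yields $\lVert w_1\rVert_{\mathcal B}\le C\varepsilon^2\sqrt b$, hence $\lVert y_1(T)\rVert^2\le\rho_3 a^2+C\varepsilon^4 b$. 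The resulting $C\varepsilon^4 b$ \emph{is} absorbable into $\delta\varepsilon^3 b$ for small $\varepsilon$, and the remaining cross term $C\varepsilon^{5/3}a\sqrt b$ (from the free--control interaction you hid in ``h.o.'') is split by Young's inequality between the $a^2$ and $\varepsilon^3 b$ budgets. Your case split $a^2\lessgtr b$ is too coarse to achieve this; the threshold must carry a positive power of $\varepsilon$.
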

{\color{black} Indeed, it suffices to choose  $\rho_2 \in (\rho_1,1)$,  $\varepsilon \in (0,\varepsilon_0)$ and $u:=u_{\varepsilon}$ defined in \eqref{u} and \eqref{uu}.} Then, using the $T$-periodicity of $u$ with respect to time, Proposition~\ref{proposition3} and Proposition~\ref{proposition2}, one  checks
that inequality \eqref{cvexponentielle} holds with $\lambda:=\text{ min } \{ -(\ln (\rho_2))/(2T), -(\ln(1-\delta \varepsilon^2))/(2T) \}$  provided that $C$ is large enough and that $r$ is small enough.  We now prove  Proposition~\ref{proposition3} and Proposition~\ref{proposition2} successively.

\begin{proof}[Proof of Proposition~\ref{proposition3}]
Performing a time-translation if necessary, we may assume without loss of generality that  $s=0$. The fact that the maximal solution $y$ is at least defined on $[0,T]$  follows from Theorem \ref{th-feedback-not-Lipschitz} and \eqref{utye}. We choose $\varepsilon_1 $ and $r_1$ small enough so that
\begin{equation}\label{epsilon1r1petit}
  r_1 + \varepsilon_1 T^{\frac{1}{2}} \leqslant \eta,
\end{equation}
where $\eta>0$ is as in Lemma~\ref{lem2}. From \eqref{utye} and \eqref{epsilon1r1petit}, we have
\begin{equation}\label{hypp-lemma-2-satisfied}
  \lVert y(0) \lVert_{L^2_L} + \lVert u(t, y(t)) \lVert_{L^2_T}   \leqslant \eta,
\end{equation}
which allows to apply Lemma \ref{lem2} with $H(t) := u(t, y(t))$, $\tilde{H} := 0$. Then, using \eqref{utye} once more,  we get
\begin{align}
\lVert y \lVert_{\mathcal{B}} & \leqslant C_3 \big( \lVert y_0 \lVert_{L^2_L} + \lVert u(t, y(t)) \lVert_{L^2_T} \big)  \notag \\
 & \leqslant C_3 \big(  r_1 + \varepsilon_1 \sqrt{T\lVert P_M(y) \lVert_{C^0L^2_L}} \big) \notag\\
 & \leqslant C_3 \big(  r_1 + \varepsilon_1^2 T C_3 + \frac{1}{4 C_3} \lVert y \lVert_{\mathcal{B}} \big), \notag\
\end{align}
which implies that
\begin{equation}\label{123}
\lVert y \lVert_{\mathcal{B}}  \leqslant 2 C_3 \big(  r_1 + \varepsilon_1^2 T C_3 \big).
\end{equation}
In the above inequalities and until the end of the proof of Proposition~\ref{proposition2},  $\mathcal{B}:=\mathcal{B}_{0,T}$.

We have the following lemma, see the proof of \cite [Proposition 4.1 and (4.14)]{rosier97} or \cite[page 121]{zuazua02}.
 \begin{lemma}\label{lem7}
If $y\in L^2(0,T; H^1(0,L))$, then $yy_x \in L^1(0,T; L^2(0,L))$. Moreover, there exists $c_4>0$, which is independent of $T$, such that, for every $T>0$ and for every $y,z \in L^2(0,T; H^1(0,L))$, we have
\begin{equation}\label{c4}
\lVert y y_x - z z_x \lVert_{L^1_TL^2_L} \leqslant  c_4 T^{\frac{1}{4}} \big(   \lVert y \lVert_{L^2_TH^1_L}   + \lVert z \lVert_{L^2_TH^1_L}    \big) \lVert y-z \lVert_{L^2_TH^1_L}.
\end{equation}

 \end{lemma}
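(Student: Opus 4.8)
The plan is to reduce the bilinear difference to divergence form and then combine the one–dimensional Sobolev embedding with H\"older's inequality in the time variable. First I would use the algebraic identity
\[
yy_x - zz_x = \tfrac{1}{2}\big((y+z)(y-z)\big)_x = \tfrac{1}{2}(y+z)_x(y-z) + \tfrac{1}{2}(y+z)(y-z)_x ,
\]
which already exhibits the structure of the right-hand side of \eqref{c4}: the factor $y+z$ will generate $\lVert y\rVert_{L^2_TH^1_L}+\lVert z\rVert_{L^2_TH^1_L}$, while the factor $y-z$ will generate $\lVert y-z\rVert_{L^2_TH^1_L}$, and the symmetry of the two terms ensures that $y$ and $z$ play symmetric roles.

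Next, for almost every fixed $t$ I would estimate the $L^2_L$-norm of each of the two products by placing the undifferentiated factor in $L^{\infty}(0,L)$ and the differentiated factor in $L^2(0,L)$, obtaining
\[
\lVert (yy_x - zz_x)(t)\rVert_{L^2_L} \leqslant \tfrac12\Big(\lVert(y+z)(t)\rVert_{L^{\infty}_L}\lVert(y-z)(t)\rVert_{H^1_L} + \lVert(y-z)(t)\rVert_{L^{\infty}_L}\lVert(y+z)(t)\rVert_{H^1_L}\Big).
\]
The key tool is the Gagliardo--Nirenberg/Sobolev inequality on the bounded interval, $\lVert g\rVert_{L^{\infty}(0,L)} \leqslant C(L)\,\lVert g\rVert_{L^2(0,L)}^{1/2}\lVert g\rVert_{H^1(0,L)}^{1/2}$, whose constant depends only on $L$ and not on $T$; this is exactly the point that will guarantee that $c_4$ is independent of $T$. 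Taking $z=0$ and noting that the resulting pointwise bound is integrable in $t$ gives for free the first assertion of the lemma, namely $yy_x\in L^1(0,T;L^2(0,L))$.

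Finally, I would integrate in $t$ and apply H\"older's inequality in the time variable to distribute the interpolated norms: keeping the $L^2_L$-interpolation factor coming from Gagliardo--Nirenberg and sending it into a time-Lebesgue exponent strictly larger than $2$ leaves room to extract a power of $T$ from the measure of $[0,T]$, which is the origin of the factor $T^{1/4}$, the remaining factors reassembling into $(\lVert y\rVert_{L^2_TH^1_L}+\lVert z\rVert_{L^2_TH^1_L})\lVert y-z\rVert_{L^2_TH^1_L}$. I expect the main obstacle to be precisely the bookkeeping in this last step: one must choose the H\"older exponents so that the three interpolation factors (one $L^2_L$ half-power, one $H^1_L$ half-power, and one full $H^1_L$ power) land exactly on the norms in the statement while producing the sharp exponent $1/4$ on $T$ with a constant that stays bounded. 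In carrying this out the natural estimate $\lVert g\rVert_{L^2_TL^2_L}\leqslant T^{1/2}\sup_{t}\lVert g(t)\rVert_{L^2_L}$ is what feeds the power of $T$, so the delicate part is to control the low-order $L^2_L$ factor (using either the Dirichlet Poincar\'e inequality $\lVert g(t)\rVert_{L^2_L}\leqslant C\lVert g_x(t)\rVert_{L^2_L}$ or the time-sup norm available for the solutions at hand) without spoiling the $T$-independence of $c_4$.
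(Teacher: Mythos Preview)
The paper does not give a proof of this lemma; it merely cites \cite[Proposition~4.1 and (4.14)]{rosier97} and \cite[p.~121]{zuazua02}. Your plan (write $yy_x-zz_x=\tfrac12\partial_x\big((y+z)(y-z)\big)$, use the one-dimensional Gagliardo--Nirenberg inequality, then H\"older in time) is exactly the standard argument found in those references, so in that sense your approach coincides with the paper's.

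You are also right to be uneasy about the final step, and your instinct points at a genuine issue with the \emph{statement} as written rather than with your method. With only $L^2(0,T;H^1(0,L))$ norms on the right-hand side, the factor $T^{1/4}$ with a constant $c_4$ independent of $T$ cannot be obtained. Take $z=0$ and $y(t,x)=f(x)$ with $f\in H^1(0,L)$ and $ff'\neq 0$ in $L^2(0,L)$: then $\lVert yy_x\rVert_{L^1_TL^2_L}=T\,\lVert ff'\rVert_{L^2_L}$ while the right-hand side equals $c_4\,T^{1/4}\bigl(T^{1/2}\lVert f\rVert_{H^1_L}\bigr)^2=c_4\,T^{5/4}\lVert f\rVert_{H^1_L}^2$, and the inequality fails as $T\to 0^+$. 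Neither the Poincar\'e inequality (it trades $\lVert\cdot\rVert_{L^2_L}$ for $\lVert\cdot\rVert_{H^1_L}$ without producing a power of $T$) nor any choice of H\"older exponents can rescue this.

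The way the cited references (and the paper itself, in every application of the lemma) actually close the argument is to use the $L^\infty(0,T;L^2(0,L))$ information carried by the $\mathcal{B}_{0,T}$-norm: after Gagliardo--Nirenberg and H\"older with exponents $(4,4,2)$ one meets $\bigl(\int_0^T\lVert y+z\rVert_{L^2_L}^2\,dt\bigr)^{1/4}$, which is bounded by $T^{1/4}\lVert y+z\rVert_{L^\infty_TL^2_L}^{1/2}\leqslant T^{1/4}\lVert y+z\rVert_{\mathcal B}^{1/2}$, and then everything reassembles into $c_4\,T^{1/4}(\lVert y\rVert_{\mathcal B}+\lVert z\rVert_{\mathcal B})\lVert y-z\rVert_{\mathcal B}$ with $c_4$ independent of $T$. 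In the paper the lemma is only ever invoked for $y,z\in\mathcal B_{0,T}$ and immediately combined with $\lVert\cdot\rVert_{L^2_TH^1_L}\leqslant\lVert\cdot\rVert_{\mathcal B}$, so the discrepancy is harmless in context. Your write-up should simply state and prove the estimate with the $\mathcal B$-norms on the right; the rest of your outline then goes through without difficulty.
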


Let us define $C_4:= c_4 T^{\frac{1}{4}}$. To simplify the notations, until the end of this section,
we write $y_1$ and $y_2$ for $P_H(y)$ and $P_M(y)$ respectively. From  \eqref{utye},  \eqref{123},
Lemma \ref{lem1}, Lemma \ref{lem7} and Proposition \ref{proposition1}, we get
\begin{align}\label{1234}
\lVert y_1 \lVert_{\mathcal{B}} & \leqslant C_2 \big( \lVert y_0^H \lVert_{L^2_L} +  \lVert u(t,y_1 +y_2)\lVert_{L^2_T}  +\lVert P_H\big( (y_1 + y_2)(y_1 +y_2)_x \big) \lVert_{L^1_TL^2_L}  \big)  \notag\\
& \leqslant  C_2 \big( \lVert y_0^H \lVert_{L^2_L} +  \varepsilon_1 \lVert \sqrt{\lVert y_2 \lVert_{L^2_L}}\lVert_{L^2_T}  +\lVert  (y_1 + y_2)(y_1 +y_2)_x  \lVert_{L^1_TL^2_L}  \big) \notag \\
& \leqslant C_2 \big(\lVert y_0^H \lVert_{L^2_L} +
 \varepsilon_1 \lVert y_2 \lVert_{L^1_TL^2_L}^{\frac{1}{2}} + C_4 \lVert y_1 +y_2 \lVert_{L^2_TH^1_L}^2    \big),
\end{align}
and
\begin{align}\label{12345}
\lVert y_2 \lVert_{\mathcal{B}} & \leqslant C_2 \big( \lVert y_0^M \lVert_{L^2_L}  +\lVert P_M\big( (y_1 + y_2)(y_1 +y_2)_x \big) \lVert_{L^1_TL^2_L}  \big)  \notag\\
& \leqslant  C_2 \big( \lVert y_0^M \lVert_{L^2_L}  +\lVert  (y_1 + y_2)(y_1 +y_2)_x  \lVert_{L^1_TL^2_L}  \big) \notag  \\
& \leqslant  C_2 \big( \lVert y_0^M \lVert_{L^2_L}  +C_4 \lVert y_1 +y_2 \lVert_{L^2_TH^1_L}^2  \big)  \notag \\
& \leqslant 2 C_2 \big( \lVert y_0^M \lVert_{L^2_L}  +C_4 \lVert y_1  \lVert_{\mathcal{B}}^2 + C_4 \lVert y_2 \lVert_{\mathcal{B}}^2 \big).
\end{align}
Since $M$ is a finite dimensional subspace of $H^1(0,L)$, there exists $C_5>0$ such that
\begin{equation}\label{c5}
 \lVert f \lVert_{H^1(0,L)} \leqslant C_5 \lVert f \lVert_{L^2_L},\;\;\;\;\;\textrm{for every} \;f \in M.
 \end{equation}
Hence
\begin{align}\label{21}
\lVert y_2 \lVert_{\mathcal{B}} & = \lVert y_2 \lVert_{L^{\infty}_TL^2_L} +\lVert y_2 \lVert_{L^2_TH^1_L}  \notag\\
& \leqslant \lVert y_2 \lVert_{L^{\infty}_TL^2_L}  + C_5 \sqrt{T}\lVert y_2 \lVert_{L^{\infty}_TL^2_L}.
\end{align}
Since $y_2(t)$ is the $L^2$-orthogonal projection on $M$ of $y(t)$, we have
\begin{equation*}
\lVert y_2 \lVert_{L^{\infty}_TL^2_L} \leqslant \lVert y \lVert_{L^{\infty}_TL^2_L} \leqslant \lVert y \lVert_{\mathcal{B}},
\end{equation*}
which, together with \eqref{123} and \eqref{21}, implies that
\begin{equation}\label{222}
\lVert y_2 \lVert_{\mathcal{B}} \leqslant (1 + C_5 \sqrt{T} ) \lVert y \lVert_{\mathcal{B}} \leqslant 2 (1 + C_5 \sqrt{T} ) C_3 \big(  r_1 + \varepsilon_1^2 T C_3 \big).
\end{equation}
Decreasing if necessary  $r_1 $ and $\varepsilon_1$, we may assume that

\begin{equation}\label{condition2}
4 C_2 C_4  (1 + C_5 \sqrt{T} ) C_3 \big(  r_1 + \varepsilon_1^2 T C_3 \big)  < \frac{1}{2}.
\end{equation}
 From estimation \eqref{12345} and condition \eqref{condition2}, we get that
\begin{align}\label{321}
\lVert y_2 \lVert_{\mathcal{B}}  \leqslant  4 C_2 \big( \lVert y_0^M \lVert_{L^2_L}  +C_4 \lVert y_1  \lVert_{\mathcal{B}}^2 \big).
\end{align}
From   \eqref{123},  \eqref{1234}, \eqref{222} and \eqref{321}, we  deduce that
\begin{align}\label{4321}
\lVert y_1 \lVert_{\mathcal{B}}^2
& \leqslant 3 C_2^2 \big(\lVert y_0^H \lVert_{L^2_L}^2 + \varepsilon_1^2 \lVert y_2 \lVert_{L^1_TL^2_L} + C_4^2 \lVert y_1 +y_2 \lVert_{L^2_TH^1_L}^4    \big) \notag\\
& \leqslant 3 C_2^2 \big(\lVert y_0^H \lVert_{L^2_L}^2 + \varepsilon_1^2 T\lVert y_2  \lVert_{L^{\infty}_TL^2_L} + 2C_4^2 \lVert y \lVert_{\mathcal{B}}^2 (\lVert y_1 \lVert_{\mathcal{B}}^2 +\lVert y_2 \lVert_{\mathcal{B}}^2)   \big) \notag \\
& \leqslant 3 C_2^2 \lVert y_0^H \lVert_{L^2_L}^2  +3 C_2^2 \big( \varepsilon_1^2 T + 16 C_4^2 (1 + C_5 \sqrt{T}) C_3^3 (r_1+\varepsilon_1^2 T C_3)^3\big)\lVert y_2  \lVert_{\mathcal{B}}  \notag\\
   & \;\;\;\;\;\; \;\;\;\;\;\;\;\;\;\; +  24C_2^2 C_4^2  C_3^2 (r_1+\varepsilon_1^2 T C_3)^2 \lVert y_1 \lVert_{\mathcal{B}}^2 \notag \\
   & \leqslant 3 C_2^2 \lVert y_0^H \lVert_{L^2_L}^2  +12 C_2^3 \big( \varepsilon_1^2 T + 16 C_4^2 (1 + C_5 \sqrt{T}) C_3^3 (r_1+\varepsilon_1^2 T C_3)^3\big)\lVert y_0^M  \lVert_{L^2_L}  \notag\\
   &  +  \Big( 12 C_2^3 C_4 \big( \varepsilon_1^2 T + 16 C_4^2 (1 + C_5 \sqrt{T}) C_3^3 (r_1+\varepsilon_1^2 T C_3)^3\big)    + 24C_2^2 C_4^2  C_3^2 (r_1+\varepsilon_1^2 T C_3)^2  \Big) \lVert y_1 \lVert_{\mathcal{B}}^2  \notag. \\
\end{align}
Again, decreasing if necessary  $r_1 $ and $\varepsilon_1$,
we may assume that
\begin{equation}\label{condition3}
 12 C_2^3 C_4 \big( \varepsilon_1^2 T + 16 C_4^2 (1 + C_5 \sqrt{T}) C_3^3 (r_1+\varepsilon_1^2 T C_3)^3\big)    + 24 C_2^2 C_4^2  C_3^2 (r_1+\varepsilon_1^2 T C_3)^2  < \frac{1}{2}.
\end{equation}
 From \eqref{4321} and \eqref{condition3}, we get
\begin{align*}
\lVert y_1 \lVert_{\mathcal{B}}^2
& \leqslant 6 C_2^2 \lVert y_0^H \lVert_{L^2_L}^2 + 24 C_2^3 \big( \varepsilon_1^2 T + 16 C_4^2 (1 + C_5 \sqrt{T}) C_3^3 (r_1+\varepsilon_1^2 T C_3)^3\big)\lVert y_0^M  \lVert_{L^2_L} \notag\\
& \leqslant 6 C_2^2 \lVert y_0^H \lVert_{L^2_L}^2 + C_4^{-1}\lVert y_0^M  \lVert_{L^2_L}, \notag
\end{align*}
which, combined with  \eqref{321}, gives the existence of $C_1>0$ independent of $y$ such that
\begin{equation}\label{apfinal}
\lVert y_1 \lVert_{\mathcal{B}}^2  +\lVert y_2  \lVert_{\mathcal{B}}  \leqslant C_1 \big(   \lVert y_0^H \lVert_{L^2_L}^2  +\lVert y_0^M  \lVert_{L^2_L}   \big).
\end{equation}
This completes the proof of Proposition \ref{proposition3}.
\end{proof}

\begin{proof}[Proof of Proposition \ref{proposition2}]
   To simplify the notations, from now on we denote by $C$ various constants which vary from place to place but do not depend on $\varepsilon$ and $r$. \\

By Lemma \ref{lem1} applied with $y:=y_1(t) - S(t) y_0^H$,  $h(t):=u_\varepsilon (t,y(t))$ and $\tilde h:= (y_1+y_2)(y_1+y_2)_x$ and by  Proposition \ref{proposition3}, we have
\begin{align}
\label{estimatey1-Sy0}
\lVert y_1(t) - S(t) y_0^H \lVert_{\mathcal{B}} & \leqslant  C\big(  \lVert u_{\varepsilon} \lVert_{L^2_T} + \lVert P_H ((y_1+y_2)(y_1+y_2)_x)\lVert_{L^1_TL^2_L}\big) \notag  \\
 & \leqslant  C\big(  \varepsilon\lVert y_2 \lVert_{L^1_TL^2_L}^{\frac{1}{2}} + \lVert y_1+y_2\lVert_{\mathcal{B}}^2\big) \notag  \\
 & \leqslant  C\big(  \varepsilon\lVert y_2 \lVert_{\mathcal{B}}^{\frac{1}{2}} + \lVert y_1\lVert_{\mathcal{B}}^2 +\lVert y_2\lVert_{\mathcal{B}}^2\big) \notag  \\
 & \leqslant  C( \varepsilon + \sqrt{r}) \big(   \lVert y_0^H \lVert_{L^2_L}^2  +\lVert y_0^M  \lVert_{L^2_L}   \big)^{\frac{1}{2}},
\end{align}
where $r := \lVert y_0 \lVert_{L^2_L} < r_{\varepsilon }<1$. On $r_{\varepsilon }$, we impose that
\begin{equation}
\label{repluspetite12}
r_{\varepsilon }  < \varepsilon^{12}.
\end{equation}
 From \eqref{estimatey1-Sy0} and \eqref{repluspetite12}, we have
\begin{align}\label{a}
\lVert y_1(t) - S(t) y_0^H \lVert_{\mathcal{B}}
 & \leqslant  C \varepsilon \big(   \lVert y_0^H \lVert_{L^2_L}^2  +\lVert y_0^M  \lVert_{L^2_L}   \big)^{\frac{1}{2}}.
\end{align}
Notice that, by Lemma \ref{lem1}, we have
\begin{align}
\lVert S(t) y_0^M \lVert_{\mathcal{B}} & \leqslant  C \lVert y_0^M \lVert_{L^2_L}, \\
\label{StH-stable}
\lVert S(t) y_0^H \lVert_{\mathcal{B}} & \leqslant  C \lVert y_0^H \lVert_{L^2_L}.
\end{align}
Proceeding as in   the proof of \eqref{a}, we have
\begin{align}\label{b}
\lVert y_2(t) - S(t) y_0^M \lVert_{\mathcal{B}} & \leqslant  C   \lVert P_M ((y_1+y_2)(y_1+y_2)_x)\lVert_{L^1_TL^2_L} \notag  \\
 & \leqslant  C\lVert y_1+y_2\lVert_{\mathcal{B}}^2 \notag  \\
 & \leqslant C \big(    \lVert y_2 \lVert_{\mathcal{B}} + \lVert S(t)y_0^H \lVert_{\mathcal{B}} +   \varepsilon \big(   \lVert y_0^H \lVert_{L^2_L}^2  +\lVert y_0^M  \lVert_{L^2_L}   \big)^{\frac{1}{2}}    \big)^2  \notag \\
 & \leqslant C \big(  (r + \varepsilon^2)(   \lVert y_0^H \lVert_{L^2_L}^2  +\lVert y_0^M  \lVert_{L^2_L}  ) +    \lVert y_0^H \lVert_{L^2_L}^2 \big)  \notag \\
 & \leqslant C\big(  \varepsilon^2 \lVert y_0^M  \lVert_{L^2_L} +    \lVert y_0^H \lVert_{L^2_L}^2 \big).
\end{align}
Let us now study successively the  two following cases

\begin{gather}
\label{H>M}
\lVert y_0^H \lVert_{L^2_L} \geqslant \varepsilon^{\frac{2}{3}} \sqrt{\lVert y_0^M \lVert_{L^2_L}},
\\
\label{H<M}
\lVert y_0^H \lVert_{L^2_L} < \varepsilon^{\frac{2}{3}} \sqrt{\lVert y_0^M \lVert_{L^2_L}}.
\end{gather}

We start with the case where \eqref{H>M} holds. From \hyperref[P1]{($\mathcal{P}_1$)},  \hyperref[P2]{($\mathcal{P}_2$)},  \eqref{a}, \eqref{b} and \eqref{H>M}, we get the existence of $\varepsilon_2 \in (0,\varepsilon_1)$ such that, for every $\varepsilon\in (0,\varepsilon_2)$,
\begin{align}
&\;\;\;\;\;\;\;\lVert y_1(T) \lVert^2_{L^2_L} +\varepsilon \lVert y_2(T) \lVert_{L^2_L}   \notag\\
&\leqslant \big(   C \varepsilon \big(   \lVert y_0^H \lVert_{L^2_L}^2  +\lVert y_0^M  \lVert_{L^2_L}   \big)^{\frac{1}{2}} +   \lVert S(T)y_0^H \lVert_{L^2_L} \big)^2   + \varepsilon \big( C\big(  \varepsilon^2 \lVert y_0^M  \lVert_{L^2_L} +    \lVert y_0^H \lVert_{L^2_L}^2 \big) +  \lVert S(T)y_0^M \lVert_{L^2_L} \big) \notag \\
&\leqslant (\rho_1 \rho_2)^{\frac{1}{2}} \lVert y_0^{H} \lVert^2_{L^2_L}+  C\varepsilon^2  \big(   \lVert y_0^H \lVert_{L^2_L}^2  +\lVert y_0^M  \lVert_{L^2_L}   \big)   + C\varepsilon   \lVert y_0^H \lVert_{L^2_L}^2 + (\varepsilon + C \varepsilon^3) \lVert y_0^M \lVert_{L^2_L}\notag\\
&\leqslant \rho_2 \lVert y_0^{H} \lVert^2_{L^2_L} +\varepsilon (1-\delta \varepsilon^2) \lVert y_0^{M} \lVert_{L^2_L}.
\end{align}

Let us now study the case where \eqref{H<M} holds. Let us define
\begin{equation}\label{def-b}
b:= y_0^M.
\end{equation}
 Then, from \eqref{a}, \eqref{StH-stable}, \eqref{b} and \eqref{H<M},  we get
\begin{align}\label{y1small}
\lVert y_1(t)  \lVert_{\mathcal{B}}
 & \leqslant  \lVert S(t) y_0^H \lVert_{\mathcal{B}}  + C \varepsilon \big(   \lVert y_0^H \lVert_{L^2_L}^2  +\lVert y_0^M  \lVert_{L^2_L}   \big)^{\frac{1}{2}} \notag \\
 & \leqslant  C \varepsilon \sqrt{\lVert b \lVert_{L^2_L}} + C \lVert y_0^H \lVert_{L^2_L}  \notag \\
 &\leqslant  C \varepsilon^{\frac{2}{3}} \sqrt{\lVert b \lVert_{L^2_L}},
\end{align}
and
\begin{equation}\label{close}
\lVert y_2(t) - S(t) y_0^M \lVert_{\mathcal{B}}  \leqslant \varepsilon^{\frac{4}{3}} \lVert b \lVert_{L^2_L},
\end{equation}
which shows that $y_2(\cdot)$ is close to $S(\cdot)y_0^M$. Let $z: [0,T]\to L^2(0,L)$
be the solution to the Cauchy problem
\begin{gather}\label{z1}
\begin{cases}
z_{1t}+z_{1xxx}+z_{1x}=0 \;\;\;& \textrm{in}\;\; (0,T) \times (0,L),\\
z_1(t,0)= z_1(t,L)=0  & \textrm{on}\;\; (0,T),\\
\displaystyle
z_{1x}(t,L)= v(t,\frac{b}{\lVert b \lVert_{L^2_L}})  & \textrm{on} \;\;(0,T),\\
z_1(0,x)= 0 & \textrm{on}\;\; (0,L).
\end{cases}
\end{gather}
 From \hyperref[P3]{($\mathcal{P}_3$)}, we know that $z_1(T) = 0$. Moreover, Lemma \ref{lem1} tells us that
\begin{equation}
\lVert z_1 (t) \lVert_{\mathcal{B}} \leqslant C \lVert v(t,\frac{b}{\lVert b \lVert_{L^2_L}}) \lVert_{L^2_T} \leqslant C.
\end{equation}
Let us define $w_1$ by
\begin{equation}\label{w11}
w_1 := y_1 - S(t) y_0^H - \varepsilon \lVert b \lVert_{L^2_L}^{\frac{1}{2}} z_1.
\end{equation}
 Then $w_1$ is the solution to the Cauchy problem
\begin{gather}\label{w1}
\begin{cases}
w_{1t}+w_{1xxx}+w_{1x} + P_H \big( (y_1 +y_2)(y_1 +y_2)_x \big)=0, \\
w_1(t,0)= w_1(t,L)=0, \\
\displaystyle
w_{1x}(t,L)= \varepsilon \left( \lVert y_2 (t)\lVert_{L^2_L}^{\frac{1}{2}} v(t, \frac{S(-t)y_2(t)}{\lVert y_2 (t)\lVert_{L^2_L}})  -   \lVert b \lVert_{L^2_L}^{\frac{1}{2}}  v(t,\frac{b}{\lVert b \lVert_{L^2_L}}) \right), \\
w_1(0,x)=0.
\end{cases}
\end{gather}
By Lemma \ref{lem1}, we get
\begin{align}\label{wes}
\lVert w_1 \lVert_{\mathcal{B}} \leqslant &C \lVert P_H \big( (y_1 +y_2)(y_1 +y_2)_x \big)\lVert_{L^1_TL^2_L} \notag\\
   &  + \varepsilon C \lVert \big( \lVert y_2 (t)\lVert_{L^2_L}^{\frac{1}{2}} v(t, \frac{S(-t)y_2(t)}{\lVert y_2 (t)\lVert_{L^2_L}})  -   \lVert b \lVert_{L^2_L}^{\frac{1}{2}}  v(t,\frac{b}{\lVert b \lVert_{L^2_L}}) \big) \lVert_{L^2_T}.
\end{align}
 Note that \eqref{close} insures that the right hand side of \eqref{wes} is of order $\varepsilon^2$. Indeed, for the first term  of the right-hand side of inequality \eqref{wes}, we have,
 using \eqref{repluspetite12}, \eqref{y1small} and \eqref{close},
\begin{align}\label{71}
C \lVert P_H \big( (y_1 +y_2)(y_1 +y_2)_x \big) \lVert_{L^1_TL^2_L}  &\leqslant  C \lVert y_1 + y_2 \lVert_{\mathcal{B}}^2  \notag\\
&\leqslant C \varepsilon^{\frac{4}{3}} \lVert b \lVert_{L^2_L} + C \lVert b \lVert_{L^2_L}  \notag \\
&\leqslant C  \lVert b \lVert_{L^2_L}^{\frac{1}{2}} \lVert b \lVert_{L^2_L}^{\frac{1}{2}} \notag\\
&\leqslant C  \varepsilon^6 \lVert b \lVert_{L^2_L}^{\frac{1}{2}}.
\end{align}
For the second term of the right hand side of inequality \eqref{wes},    by
\eqref{SrotationonM}, the Lipschitz condition \eqref{p3lip} on $v$ and \eqref{close}, we get, for every  $t \in [0,T]$,
\begin{align}\label{72}
& \;\;\;\mid  \lVert b \lVert_{L^2_L}^{\frac{1}{2}}  \big(   v(t,\frac{b}{\lVert b \lVert_{L^2_L}}) -  v(t, \frac{S(-t)y_2(t)}{\lVert y_2 (t)\lVert_{L^2_L}})\big)  \mid  \notag \\
& \leqslant  C \lVert b \lVert_{L^2_L}^{\frac{1}{2}}  \lVert \big( \frac{b}{\lVert b \lVert_{L^2_L}}-   \frac{S(-t)y_2(t)}{\lVert y_2 (t)\lVert_{L^2_L}}\big) \lVert_{L^2_L}  \notag \\
& \leqslant C \lVert b \lVert_{L^2_L}^{- \frac{1}{2}} \lVert y_2 (t)\lVert_{L^2_L}^{-1} \big( \lVert y_2 (t)\lVert_{L^2_L} \lVert b - S(-t) y_2(t) \lVert_{L^2_L} +  \lVert S(-t) y_2 (t)\lVert_{L^2_L} |  \lVert y_2(t) \lVert_{L^2_L} - \lVert b \lVert_{L^2_L}|  \big) \notag \\
& \leqslant C  \varepsilon^{\frac{4}{3}} \lVert b \lVert_{L^2_L}^{\frac{1}{2}},
\end{align}
and
\begin{align}\label{73}
 |  (\lVert y_2(t) \lVert_{L^2_L}^{\frac{1}{2}} -  \lVert b \lVert_{L^2_L}^{\frac{1}{2}} ) v(t, \frac{S(-t)y_2(t)}{\lVert y_2 (t)\lVert_{L^2_L}} )|    & \leqslant C \varepsilon^{\frac{4}{3}} \lVert b \lVert_{L^2_L}^{\frac{1}{2}}.
\end{align}
Combining \eqref{71}, \eqref{72} and \eqref{73}, we obtain the following estimate on $w_1$
\begin{equation}\label{esw1}
\lVert w_1 \lVert_{\mathcal{B}} \leqslant C \varepsilon^2 \lVert b \lVert_{L^2_L}^{\frac{1}{2}}.
\end{equation}
We fix
\begin{equation}\label{rho-comp}
\rho_3 \in  (\rho_1, \rho_2).
\end{equation}
 Then, by \eqref{w11}, \hyperref[P1]{($\mathcal{P}_1$)} as well as the fact that $z_1(T) = 0$, we get
\begin{equation}\label{y1t}
\lVert y_1(T) \lVert_{L^2_L}^2 \leqslant \rho_3 \lVert y_0^H \lVert_{L^2_L}^2 + C \varepsilon^4 \lVert b \lVert_{L^2_L}.
\end{equation}

We then come to the estimate of $y_2$. Let $\tau_1(t) := S(t) y_0^H$ and let $\tau_2:[0,T]\to L^2(0,L)$ and $z_2:[0,T]\to L^2(0,L) $ be the solutions to the Cauchy problems
\begin{gather}\label{tau2}
\begin{cases}
\tau_{2t}+\tau_{2xxx}+\tau_{2x} +  P_M (\tau_1 y_{1x} + \tau_{1x} y_1) -P_M (\tau_1 \tau_{1x})=0, \\
\tau_2(t,0)= \tau_2(t,L)=0, \\
\tau_{2x}(t,L)= 0, \\
\tau_2(0,x)= 0,
\end{cases}
\end{gather}
and
\begin{gather}\label{z2}
\begin{cases}
z_{2t}+z_{2xxx}+z_{2x} + P_M (z_1 z_{1x})=0,\;\;\;\;\;\;\;\;\;\;\;\; \\
z_2(t,0)= z_2(t,L)=0, \\
z_{2x}(t,L)= 0, \\
z_2(0,x)= 0.
\end{cases}
\end{gather}
Lemma~\ref{lem1}, Lemma~\ref{lem7}, \eqref{H<M} and \eqref{y1small} show us that
\begin{align}\label{7}
\lVert \tau_2 \lVert_{\mathcal{B}} & \leqslant C \lVert P_M \big( \tau_1 y_{1x} + \tau_{1x} y_1 -\tau_1 \tau_{1x} \big) \lVert_{L^1_TL^2_L} \notag \\
& \leqslant C \lVert \tau_1 \lVert_{\mathcal{B}} (\lVert y_1 \lVert_{\mathcal{B}} +\lVert \tau_1 \lVert_{\mathcal{B}} )   \notag \\
& \leqslant  C \varepsilon^{\frac{2}{3}} \lVert b \lVert_{L^2_L}^{\frac{1}{2}} \lVert y_0^H \lVert_{L^2_L},
\end{align}
and
\begin{equation}
\lVert z_2 \lVert_{\mathcal{B}} \leqslant \lVert z_1 \lVert_{\mathcal{B}}^2  \leqslant C.
\end{equation}
 From \hyperref[P3]{($\mathcal{P}_3$)}, \eqref{z1} and \eqref{z2},   we get
\begin{equation}
\big{<} z_2 (T), S(T) b \big{>}_{(L^2_L,L^2_L)}  \;<\; -2\delta \lVert b \lVert_{L^2_L}.
\end{equation}
Hence
\begin{align}\label{8}
\lVert S(T) b + \varepsilon^2 \lVert b \lVert_{L^2_L} z_2(T) \lVert_{L^2_L} &=  \Big ( \big{<} S(T) b + \varepsilon^2 \lVert b \lVert_{L^2_L} z_2(T), S(T) b + \varepsilon^2 \lVert b \lVert_{L^2_L} z_2(T)   \big{>}_{(L^2_L, L^2_L)} \Big )^{\frac{1}{2}}  \notag  \\
& \leqslant \Big ( \lVert b \lVert_{L^2_L}^2 +  \varepsilon^4 \lVert b \lVert_{L^2_L}^2 C  - 4 \delta \varepsilon^2 \lVert b \lVert_{L^2_L}^2  \Big )^{\frac{1}{2}}  \notag \\
& \leqslant  \lVert b \lVert_{L^2_L} (1- 2\delta \varepsilon^2 + C \varepsilon^4).
\end{align}
Let us define $w_2:[0,T]\to L^2(0,L)$ by
\begin{equation}\label{w2}
w_2:= y_2 -\tau_2 -\varepsilon^2 \lVert b \lVert_{L^2_L} z_2 -S(t)b.
\end{equation}
Then, from \eqref{skdv}, \eqref{tau2} and \eqref{z2}, we get that
\begin{align*}
w_{2t} & = y_{2t} - \tau_{2t} -\varepsilon^2 \lVert b \lVert_{L^2_L} z_{2t} - (S(t)b)_t   \notag \\
            &=  -w_{2x} -w_{2xxx} - P_M\big( (y_1 +y_2)(y_1 +y_2)_x \big) + P_M\big( \tau_1 y_{1x} + \tau_{1x}y_1\big)\\
            & \;\;\;\;\;\;\; -P_M\big( \tau_1 \tau_{1x}\big) +   \varepsilon^2 \lVert b \lVert_{L^2_L}  P_M\big(z_1 z_{1x} \big) \notag \\
            & = -w_{2x} -w_{2xxx} -  \varepsilon  \lVert b \lVert_{L^2_L}^{\frac{1}{2}} P_M\big( w_1z_{1x} + w_{1x}z_1 \big) - P_M \big( w_1 w_{1x} \big)\\
            & \;\;\;\;\;\;\; - P_M\big( y_1 y_{2x} + y_2 y_{1x} + y_2 y_{2x} \big).
\end{align*}
Hence, $w_2$ is the solution to the Cauchy problem
\begin{gather}
\begin{cases}
w_{2t}+w_{2xxx}+w_{2x} + \varepsilon  \lVert b \lVert_{L^2_L}^{\frac{1}{2}} P_M\big( w_1z_{1x} + w_{1x}z_1 \big) + P_M \big( w_1 w_{1x} \big)
\\
\phantom{bbbbbbbbbbbbbbbbbbbbbb} + P_M\big( y_1 y_{2x} + y_2 y_{1x} + y_2 y_{2x} \big)=0, \\
w_2(t,0)= w_2(t,L)=0, \\
w_{2x}(t,L)= 0, \\
w_2(0,x)= 0.
\end{cases}
\end{gather}
From Lemma \ref{lem1}, Lemma~\ref{lem7}, Proposition~\ref{proposition3}, \eqref{repluspetite12}, \eqref{H<M} and  \eqref{esw1},  we get
\begin{align}\label{esw2}
\lVert w_2 \lVert_{\mathcal{B}}  & \leqslant C\varepsilon \lVert b \lVert_{L^2_L}^{\frac{1}{2}} \lVert P_M\big( w_1z_{1x} + w_{1x}z_1 \big) \lVert_{L^1_TL^2_L} + C\lVert P_M\big( w_1 w_{1x} \big)  \lVert_{L^1_TL^2_L} \notag \\
 & \;\;\;\;\;\;\;\;\;\;\;\;+ C \lVert  P_M\big( y_1 y_{2x} + y_2 y_{1x} + y_2 y_{2x}\big)  \lVert_{L^1_TL^2_L}  \notag \\
& \leqslant C \varepsilon \lVert b \lVert_{L^2_L}^{\frac{1}{2}}  \varepsilon^2 \lVert b \lVert_{L^2_L}^{\frac{1}{2}}   +  C \varepsilon^4 \lVert b \lVert_{L^2_L} +  C \big(   \lVert y_0^H \lVert^2_{L^2_L} + \lVert y_0^M \lVert_{L^2_L}\big)^{\frac{3}{2}}   \notag \\
& \leqslant C \varepsilon^3 \lVert b \lVert_{L^2_L}.
\end{align}
We can now estimate $y_2(T)$ from \eqref{7}, \eqref{8}, \eqref{w2} and \eqref{esw2}:
\begin{align}\label{y2t}
\lVert y_2 (T) \lVert_{L^2_L} & = \lVert w_2(T) + \tau_2(T) + \varepsilon^2 \lVert b \lVert_{L^2_L} z_2(T) + S(T)b  \lVert_{L^2_L} \notag \\
& \leqslant \lVert b \lVert_{L^2_L} \big(   C \varepsilon^3 + 1- 2\delta \varepsilon^2 + C \varepsilon^4 \big) + C \varepsilon^{\frac{2}{3}} \lVert b \lVert_{L^2_L}^{\frac{1}{2}} \lVert y_0^H \lVert_{L^2_L}.
\end{align}
Combining\eqref{def-b}, \eqref{rho-comp}, \eqref{y1t} and \eqref{y2t}, we get existence of $\varepsilon_3>0 $ such that, for every $\varepsilon \in (0,\varepsilon_3]$, we have
\begin{equation}
\begin{array}{l}
\lVert y_1(T) \lVert^2_{L^2_L} +\varepsilon \lVert y_2(T) \lVert_{L^2_L}
\\
\phantom{bbbb}\leqslant \rho_3 \lVert y_0^H \lVert^2_{L^2_L} + C \varepsilon^4 \lVert b \lVert_{L^2_L} + \varepsilon \left(  \lVert b \lVert_{L^2_L} \big(   C \varepsilon^3 + 1- 2\delta \varepsilon^2 + C \varepsilon^4 \big) + C \varepsilon^{\frac{2}{3}} \lVert b \lVert_{L^2_L}^{\frac{1}{2}} \lVert y_0^H \lVert_{L^2_L} \right)
\\
\phantom{bbbb}\leqslant \rho_2 \lVert y_0^{H} \lVert^2_{L^2_L} +\varepsilon (1-\delta \varepsilon^2) \lVert y_0^{M} \lVert_{L^2_L}.
\end{array}
\end{equation}
This concludes the proof of Proposition \ref{proposition2}.
\end{proof}
\textbf{Acknowledgments.} We thank Jixun Chu, Ludovick Gagnon, Peipei Shang and Shuxia Tang for useful comments on a preliminary version of this article.

\appendix{}
\section{Appendix: Proof of Proposition \ref{proposition1}}
\label{sec-appendix-A}
\begin{proof}[Proof of Proposition \ref{proposition1}]
It is clear that, if $(y_1,y_2)$ is a solution to \eqref{skdv}, then $y$ is solution to \eqref{kdv-closed-loop-I}.  Let us assume that $y$ is a solution
 to the Cauchy problem \eqref{kdv-closed-loop-I}.  Then, by Definition \ref{definition3}, for every $\tau\in [s,T]$ and
for every $\phi \in C^3 ([s,\tau] \times [0,L])$ satisfying
\begin{equation}
\label{condition-bord-phi}
\phi(t,0) = \phi(t,L) = \phi_x (t,0) =0,\;\;\; \forall t \in [s,\tau],
\end{equation}
we have
\begin{align}\label{def0}
-\int_s^{\tau} \int_0^L &(\phi_t + \phi_x + \phi_{xxx})y dxdt - \int_s^{\tau} u(t,y(t,\cdot)) \phi_x (t,L) dt + \int_s^{\tau} \int_0^L \phi y y_x dxdt  \notag\\
 & + \int_0^L y(\tau,x)\phi(\tau,x) dx - \int_0^L y_0 \phi(s,x) dx = 0.
\end{align}

Let us denote by $\phi_1$ and $\phi_2$  the projection of $\phi$ on $H$ and $M$ respectively: $\phi_1:=P_H(\phi)$, $\phi_2:=P_M(\phi)$. Because $M$ is spanned by $\varphi_1^j$ and $ \varphi_2^j$, $j\in \{1,\ldots,n\}$, which are of class $C^\infty$ and satisfy
\begin{equation*}
\varphi_1^j (0) = \varphi_1^j (L) = \varphi_{1x}^j (0) = \varphi_{1x}^j (L) = 0,
\end{equation*}
\begin{equation*}
\varphi_2^j (0) = \varphi_2^j (L) = \varphi_{2x}^j (0) = \varphi_{2x}^j (L) = 0,
\end{equation*}
the functions $\phi_1, \phi_2 \in C^3 ([s,\tau] \times [0,L])$ and satisfy
\begin{gather}
\label{condition-bord-phi-1}
{\color{black} \phi_1(t,0) = \phi_1(t,L)= \phi_{1x} (t,0) =0, \forall t \in [s,\tau],}
\\
\label{condition-bord-phi-2}
\phi_2(t,0) = \phi_2(t,L) = \phi_{2x} (t,0) = \phi_{2x}(t,L) = 0, \forall t \in [s,\tau].
\end{gather}
Using \eqref{def0} for $\phi=\phi_2$ in \eqref{def0} together with \eqref{condition-bord-phi-2}, we get
\begin{align}
\label{premieres-integrale-phi2}
-\int_s^{\tau} \int_0^L &(\phi_{2t} + \phi_{2x} + \phi_{2xxx})y dxdt  + \int_s^{\tau} \int_0^L \phi_2 y y_x dxdt  \notag\\
 & + \int_0^L y(\tau,x)\phi_2(\tau,x) dx - \int_0^L y_0 \phi_2(s,x) dx = 0,
\end{align}
which, combined with the fact that $\phi_{2t} + \phi_{2x} + \phi_{2xxx}\in M$, gives
\begin{align}\label{def2}
-\int_s^{\tau} \int_0^L &(\phi_{2t} + \phi_{2x} + \phi_{2xxx})y_2 dxdt  + \int_s^{\tau} \int_0^L \phi_2 P_M (y y_x) dxdt  \notag\\
 & + \int_0^L y_2 (\tau,x)\phi_2(\tau,x) dx - \int_0^L P_M(y_0)\phi_2(s,x) dx = 0.
 \end{align}
 Simple integrations by parts show that $\phi_{1x} + \phi_{1xxx} \in M^\bot =H$. Since, $\phi_1$ and $\phi_{1t}$ are also in $H$, we get from \eqref{def2} that
 \begin{align}\label{def3}
-\int_s^{\tau} \int_0^L &(\phi_{t} + \phi_{x} + \phi_{xxx})y_2 dxdt  + \int_s^{\tau} \int_0^L \phi P_M (y y_x) dxdt  \notag\\
 & + \int_0^L y_2 (\tau,x)\phi(\tau,x) dx - { \color{black} \int_0^L P_M(y_0) \phi(s,x) dx} = 0,
 \end{align}
 which is exactly the definition of a solution of the second part of the linear KdV system \eqref{skdv}.
 We then combine \eqref{def0} and \eqref{def3} to get
 \begin{align}
-\int_s^{\tau} \int_0^L &(\phi_t + \phi_x + \phi_{xxx})y_1 dxdt - \int_s^{\tau} u(t,y(t,\cdot)) \phi_x (t,L) dt + \int_s^{\tau} \int_0^L \phi P_H (y y_x) dxdt  \notag\\
 & + \int_0^L y_1 (\tau,x)\phi(\tau,x) dx - \int_0^L P_H(y_0) \phi(0,x) dx = 0,
\end{align}
 and we get the definition of a solution to the first part of the linear KdV system \eqref{skdv}. This concludes the
  proof of Proposition~\ref{proposition1}.
 \end{proof}

\section{Proofs of Theorem~\ref{th-feedback-Lipschitz} and Theorem \ref{th-feedback-not-Lipschitz}}
\label{appendix-proof-th-feedback-Lipschitz}
 Our strategy to prove Theorem~\ref{th-feedback-Lipschitz} is to prove first the existence of  a solution for small times and then to use some a priori estimates to control the $L^2_L$-norm of the solution with which we can {\color{black} extend} the solution to a longer time, and to  continue until the solution blows up.  We start by proving the following lemma.

\begin{lemma}\label{lem-small}
Let $C_2>0$ be as in Lemma~\ref{lem1} for $T_2-T_1=1$. Assume that $u$ is a Carath\'{e}odory function and  that, for every $R>0$, there exists $K(R)>0$ such that
\begin{equation}
\label{estimate-Lipschitz-K}
\left(\lVert y\rVert_{L^2_L}\leqslant R \text{ and } \lVert z\rVert_{L^2_L}\leqslant R\right)
\Rightarrow
\left(| u(t,y)-u(t,z)|\leqslant K(R) \lVert y- z\rVert_{L^2_L},\;\;\; \forall t\in \R\right).
\end{equation}
Then, for every $R \in (0, + \infty)$, there exists a time $T(R)>0$ such that, for every $s\in \R$ and for every $y_0\in L^2(0,L)$ with $\lVert y_0 \lVert_{L^2_L} \leqslant R$, the Cauchy problem \eqref{kdv-closed-loop-I} has one and only one solution $y$ on $[s, s+ T(R)]$.
Moreover, this solution satisfies
\begin{equation}\label{smallsolution}
\lVert y \lVert_{\mathcal{B}_{s, s + T(R)}} \leqslant C_R := 3C_2 R.
\end{equation}
\end{lemma}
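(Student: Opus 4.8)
The plan is to produce $y$ as the fixed point of a contraction built from the linear theory of Lemma~\ref{lem1} and the nonlinear estimate of Lemma~\ref{lem7}. Throughout write $\mathcal{B}:=\mathcal{B}_{s,s+T}$ and set the radius $\mathcal{R}:=3C_2R$. For $w\in\mathcal{B}$ let $\Gamma(w)\in\mathcal{B}$ be the solution, furnished by Lemma~\ref{lem1}, of the linear Cauchy problem \eqref{likdv} on $(s,s+T)$ with $\tilde h:=-ww_x$, $h(t):=u(t,w(t))$ and initial datum $y_0$; by Definition~\ref{definition3} and Definition~\ref{def-solution-closed-loop} a fixed point of $\Gamma$ is exactly a solution of \eqref{kdv-closed-loop-I} on $[s,s+T]$. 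Two preliminary remarks make this well defined and uniform in $s$. First, although $C_2$ is the constant of Lemma~\ref{lem1} for intervals of length $1$, it controls every length $T\leqslant 1$ as well: extending $h,\tilde h$ by $0$ on $(s+T,s+1)$ and solving \eqref{likdv} on $(s,s+1)$, the restriction to $(s,s+T)$ agrees with the solution there by uniqueness, whence $\lVert\Gamma(w)\rVert_{\mathcal{B}}\leqslant C_2(\lVert y_0\rVert_{L^2_L}+\lVert h\rVert_{L^2(s,s+T)}+\lVert\tilde h\rVert_{L^1(s,s+T;L^2_L)})$. Second, for $w\in C^0([s,s+T];L^2(0,L))$ the map $t\mapsto u(t,w(t))$ is measurable, being a Carath\'eodory map \eqref{measurable}--\eqref{almostcontinuity} composed with a continuous curve, so $h\in L^2(s,s+T)$ by the local bound \eqref{locallybounded}. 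Since $C_2$, $c_4$, $C_B$ and $K$ are all independent of $s$, the $T(R)$ produced below is uniform in $s$; I impose $T(R)\leqslant 1$ from the start.

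I first show that $\Gamma$ maps the closed ball $B:=\{w\in\mathcal{B}:\lVert w\rVert_{\mathcal{B}}\leqslant\mathcal{R}\}$ into itself. For $w\in B$ one has $\lVert w(t)\rVert_{L^2_L}\leqslant\mathcal{R}$ for all $t$, so \eqref{locallybounded} gives $\lVert h\rVert_{L^2(s,s+T)}\leqslant C_B(\mathcal{R})\sqrt{T}$, while Lemma~\ref{lem7} with $z=0$ gives $ww_x\in L^1(s,s+T;L^2_L)$ with $\lVert ww_x\rVert_{L^1(s,s+T;L^2_L)}\leqslant c_4T^{1/4}\lVert w\rVert_{\mathcal{B}}^2\leqslant c_4T^{1/4}\mathcal{R}^2$. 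Combining with the estimate above,
\[
\lVert\Gamma(w)\rVert_{\mathcal{B}}\leqslant C_2R+C_2C_B(\mathcal{R})\sqrt{T}+C_2c_4\mathcal{R}^2T^{1/4}.
\]
Both the $\sqrt{T}$ and $T^{1/4}$ terms tend to $0$ as $T\to 0^+$, so I may pick $T(R)\in(0,1]$ (depending only on $R$) so that they sum to at most $2C_2R$, yielding $\lVert\Gamma(w)\rVert_{\mathcal{B}}\leqslant 3C_2R=\mathcal{R}$.

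Next I make $\Gamma$ a contraction. For $w,\tilde w\in B$ the difference $\Gamma(w)-\Gamma(\tilde w)$ solves \eqref{likdv} with zero initial datum, boundary term $u(\cdot,w)-u(\cdot,\tilde w)$ and source $-(ww_x-\tilde w\tilde w_x)$. The Lipschitz hypothesis \eqref{estimate-Lipschitz-K} gives $\lVert u(\cdot,w)-u(\cdot,\tilde w)\rVert_{L^2(s,s+T)}\leqslant K(\mathcal{R})\sqrt{T}\,\lVert w-\tilde w\rVert_{\mathcal{B}}$, and Lemma~\ref{lem7} gives $\lVert ww_x-\tilde w\tilde w_x\rVert_{L^1(s,s+T;L^2_L)}\leqslant 2c_4\mathcal{R}T^{1/4}\lVert w-\tilde w\rVert_{\mathcal{B}}$, so that
\[
\lVert\Gamma(w)-\Gamma(\tilde w)\rVert_{\mathcal{B}}\leqslant C_2\big(K(\mathcal{R})\sqrt{T}+2c_4\mathcal{R}T^{1/4}\big)\lVert w-\tilde w\rVert_{\mathcal{B}}.
\]
After possibly shrinking $T(R)$ the bracket is $<1/2$, and the Banach fixed point theorem yields a unique fixed point $y\in B$, i.e. a solution of \eqref{kdv-closed-loop-I} on $[s,s+T(R)]$ obeying \eqref{smallsolution}.

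It remains to upgrade uniqueness in $B$ to uniqueness among all solutions on $[s,s+T(R)]$. Any solution lies in $\mathcal{B}$ by Definition~\ref{definition3}, but a priori its norm may exceed $\mathcal{R}$. Given two solutions $y,\tilde y$, put $\mathcal{R}':=\max(\lVert y\rVert_{\mathcal{B}},\lVert\tilde y\rVert_{\mathcal{B}})<+\infty$ and rerun the contraction estimate with $\mathcal{R}'$ in place of $\mathcal{R}$ on a short subinterval $[s,s+\sigma]$: the same computation gives $\lVert y-\tilde y\rVert_{\mathcal{B}_{s,s+\sigma}}\leqslant C_2(K(\mathcal{R}')\sqrt{\sigma}+2c_4\mathcal{R}'\sigma^{1/4})\lVert y-\tilde y\rVert_{\mathcal{B}_{s,s+\sigma}}$, forcing $y=\tilde y$ on $[s,s+\sigma]$ once $\sigma$ is small. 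A standard continuation argument — taking $\sigma^\star$ the supremum of times up to which they agree and repeating the estimate started from $s+\sigma^\star$ with $y(s+\sigma^\star)=\tilde y(s+\sigma^\star)$ — propagates the equality to all of $[s,s+T(R)]$. I expect the main obstacles to be precisely these two points: obtaining uniqueness \emph{outside} the ball, where the norm bound is not yet available, and justifying the measurability of the composed non-smooth feedback $t\mapsto u(t,w(t))$; the self-map and contraction estimates themselves are routine consequences of Lemma~\ref{lem1} and Lemma~\ref{lem7}.
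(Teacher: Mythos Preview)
Your proof is correct and follows essentially the same fixed-point scheme as the paper: define the solution map via Lemma~\ref{lem1}, use Lemma~\ref{lem7} and the feedback bounds to show it is a self-map and a contraction on the ball of radius $3C_2R$, then invoke Banach. Your treatment of the uniqueness \emph{outside} the ball differs slightly from the paper's---the paper observes instead that any solution automatically satisfies $\lVert y\rVert_{\mathcal{B}_{\tau,\tau+T}}\leqslant 3C_2\lVert y(\tau)\rVert_{L^2_L}$ for $T$ small enough (so falls into the appropriate ball), whereas you run the contraction estimate directly with the enlarged radius $\mathcal{R}'$ and a uniform step size $\sigma$; both arguments are standard and equivalent in strength.
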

\begin{proof}[Proof of Lemma \ref{lem-small}] Let us first point out that it follows from our choice of $C_2$ and  Lemma~\ref{lem1} that, for every $-\infty<T_1<T_2<+\infty$ such that
 $T_2 - T_1 \leqslant 1$, for every solution $y$ of problem \eqref{likdv},
 estimation \eqref{l1} holds.

 Let $y_0\in L^2(0,L)$  be such that
\begin{equation}\label{y0petitR}
\lVert y_0 \lVert_{L^2_L} \leqslant R.
\end{equation}
Let us  define $\mathcal{B}_1$ by
\begin{equation*}
\mathcal{B}_1 :=  \big\{  y \in \mathcal{B}_{s, s+ T(R)}  ;\,  \lVert y \lVert_{\mathcal{B}_{s, s+T(R)}} \leqslant C_R \big \}.
\end{equation*}
 The set $\mathcal{B}_1$ is a closed subset of $\mathcal{B}_{s, s+ T(R)}$. For every  $y \in \mathcal{B}_1$,
 we define $\Psi(y)$ as the solution of \eqref{likdv} with $\tilde{h} := - y y_x$, $h(t):= u(t, y(t,\cdot))$ and $y_0 := y_0$. Let us prove that, for $T(R)$ small enough, the smallness being independent of $y_0$ provided that it satisfies \eqref{y0petitR}, we have
\begin{equation}\label{maps}
\Psi(\mathcal{B}_1) \subset  \mathcal{B}_1.
\end{equation}
Indeed for $y \in \mathcal{B}_1$, by Lemma \ref{lem1} and Lemma~\ref{lem7}, we have, if $T(R)\leqslant 1$,
\begin{align}
\label{B1conserved-step1}
\lVert \Psi(y) \lVert_{\mathcal{B}} & \leqslant C_2\big( \lVert y_0 \lVert_{L^2_L} + \lVert h \lVert_{L^2_T} + \lVert \tilde{h} \lVert_{L^1 (0, T; L^2 (0,L))} \big) \notag \\
& \leqslant  C_2\big( \lVert y_0 \lVert_{L^2_L} + \lVert  u(t, y(t, \cdot)) \lVert_{L^2_T} + \lVert -yy_x \lVert_{L^1 (s, s+ T(R); L^2 (0,L))} \big) \notag \\
& \leqslant C_2 \big( R  + C_B(C_R) T(R)^{\frac{1}{2}}  + c_4 T(R)^{\frac{1}{4}} \lVert y \lVert_{\mathcal{B}}^{2} \big).
\end{align}
In \eqref{B1conserved-step1} and until the end of the proof of Lemma~\ref{lem-small}, for ease of notation, we simply write $\lVert \cdot \lVert_{\mathcal{B}}$ for
$\lVert \cdot \lVert_{\mathcal{B}_{s, s+ T(R)}}$. From \eqref{B1conserved-step1}, we get that, if
\begin{equation}\label{TR1}
T(R) \leqslant \textrm{min} \left\{ \big( \frac{R}{C_B(C_R)} \big)^2, \big( \frac{1}{9c_4 C_2^2 R} \big)^4 ,1\right\},
\end{equation}
then \eqref{maps} holds. From now on, we assume that \eqref{TR1} holds.

Note that every $y \in \mathcal{B}_1 $ such that $\Psi (y)  = y $ is a solution of \eqref{kdv-closed-loop-I}.  In order to use the Banach fixed point theorem, it remains to estimate $\lVert \Psi(y)-\Psi(z) \lVert_{\mathcal{B}}$.   We know that $\Psi(y)-\Psi(z)$ is the solution of equation  \eqref{likdv} with $T_1:=s$, $T_2=s+T(R)$, $\tilde{h} := - y y_x + z z_x$, $h(t) := u(t, y(t, \cdot))- u(t, z(t, \cdot)) $ and $y_0 := 0$.  Hence, from Lemma \ref{lem1}, Lemma \ref{lem7} and \eqref{estimate-Lipschitz-K}, we get that
\begin{align}
\lVert \Psi(y)-\Psi(z) \lVert_{\mathcal{B}} & \leqslant C_2\big( \lVert y_0 \lVert_{L^2_L} + \lVert h \lVert_{L^2_T} + \lVert \tilde{h} \lVert_{L^1 (0, T; L^2 (0,L))} \big) \notag \\
& \leqslant C_2 \big( 0+ T(R)^{\frac{1}{2}} K(C_R) \lVert y-z \lVert_{\mathcal{B}} + c_4 T(R)^{\frac{1}{4}}   \lVert y-z \lVert_{\mathcal{B}} ( \lVert y \lVert_{\mathcal{B}}+  \lVert z \lVert_{\mathcal{B}}) \big) \notag \\
& \leqslant C_2 \lVert y-z \lVert_{\mathcal{B}} \big( T(R)^{\frac{1}{2}} K(C_R)+  2 c_4 T(R)^{\frac{1}{4}} C_R \big),   \notag
\end{align}
which shows that, if
\begin{equation}\label{TR2}
T(R) \leqslant \textrm{min} \left\{ \big( \frac{1}{12c_4 C_2^2 R} \big)^4, \big( \frac{1}{4C_2 K(3C_2R)} \big)^2 \right\},
\end{equation}
then,
\begin{equation}
\lVert \Psi(y)-\Psi(z) \lVert_{\mathcal{B}}  \leqslant \frac{3}{4} \lVert y-z \lVert_{\mathcal{B}}.   \notag \\
\end{equation}

Hence, by the Banach fixed point theorem,  there exists $y \in \mathcal{B}_1 $ such that $\Psi (y)  = y $, which is {\color{black} the}  solution that we are looking for.  We define $T(R)$ as
\begin{equation}\label{TR}
T(R) := \textrm{min} \left\{ \big( \frac{R}{C_B(3C_2 R)} \big)^2, \big( \frac{1}{12c_4 C_2^2 R} \big)^4, \big( \frac{1}{4C_2 K(3C_2R)} \big)^2, 1 \right\}.
\end{equation}

It only remains to prove the uniqueness of the solution to the Cauchy problem \eqref{kdv-closed-loop-I} (the above proof gives only the uniqueness in the set  $\mathcal{B}_1$). Clearly it suffices to prove that two solutions  to \eqref{kdv-closed-loop-I-sans-y0} which are equal at a time $\tau$ are equal in a neighborhood of $\tau$ in $[\tau,+\infty)$. This property follows from the
above proof and from the fact that, for every
solution $y:[\tau,\tau_1]\to L^2(0,L)$ of \eqref{kdv-closed-loop-I}, then, if $T>0$ is small enough (the smallness depending on $y$),
\begin{equation}\label{dansB1pourTpetit}
  \lVert y \lVert_{\mathcal{B}_{\tau,\tau+T}} \leqslant 3C_2\lVert y(\tau) \lVert_{L^2_L}.
\end{equation}
\end{proof}

 Proceeding similarly as in the above  proof of Lemma \ref{lem-small},  one can get the following lemma concerning the Cauchy problem \eqref{nonlikdv}.
\begin{lemma}\label{lemma-small-H}
Let $C_2>0$ be as in Lemma~\ref{lem1} for $T_2-T_1=1$. Given $R, M > 0$, there exists $T(R, M) > 0$ such that,  for every $s \in \mathbb{R}$, for every $y_0 \in L^2(0,L)$ with $\lVert y_0 \lVert_{L^2_L} \leqslant R$, and for every measurable $H:(s, s + T(R, M))\to \R$ such that  $| H(t) | \leqslant M$ for every  $t\in (s, s + T(R, M))$,  the Cauchy problem
\begin{gather}\label{nonli-bound}
\begin{cases}
y_t+ y_{xxx}+ y_x+ y y_x= 0   \;\;\; & \textrm{in} \;\;(s, s+ T(R, M)) \times (0, L),\\
y(t,0)= y(t,L)= 0  & \textrm{on} \;\;(s, s+ T(R, M)), \\
y_x(t,L)= H(t) & \textrm{on} \;\;(s, s+ T(R, M)), \\
y(s,x)= y_0(x) & \textrm{on} \;\;(0, L),
\end{cases}
\end{gather}
has one and only one solution $y$ on $[s, s+ T(R,M)]$. Moreover, this solution satisfies
\begin{equation}
\lVert y \lVert_{\mathcal{B}_{s, s+ T(R, M)}} \leqslant 3C_2 R.
\end{equation}
\end{lemma}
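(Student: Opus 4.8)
The plan is to reproduce \emph{verbatim} the Banach fixed point scheme used in the proof of Lemma~\ref{lem-small}, the only difference—and in fact a simplification—being that here the Neumann datum $H$ is a prescribed bounded measurable function rather than a feedback evaluated along the trajectory. Fix $R,M>0$, $s\in\R$, and $y_0$ with $\lVert y_0\rVert_{L^2_L}\le R$. For $T\in(0,1]$ to be chosen, set $C_R:=3C_2R$ and let $\mathcal{B}_1:=\{y\in\mathcal{B}_{s,s+T};\ \lVert y\rVert_{\mathcal{B}_{s,s+T}}\le C_R\}$, a closed subset of the Banach space $\mathcal{B}_{s,s+T}$. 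For $y\in\mathcal{B}_1$ define $\Psi(y)$ to be the solution, furnished by Lemma~\ref{lem1}, of the linear Cauchy problem \eqref{likdv} on $(s,s+T)$ with $\tilde h:=-yy_x$, $h:=H$, and initial datum $y_0$; by Definition~\ref{definition3}, a fixed point of $\Psi$ in $\mathcal{B}_1$ is exactly a solution of \eqref{nonli-bound}.

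First I would check the self-mapping property $\Psi(\mathcal{B}_1)\subset\mathcal{B}_1$ for $T$ small. Combining the estimate \eqref{l1} (which holds with the constant $C_2$ of Lemma~\ref{lem1} as soon as $T\le 1$, exactly as recalled at the start of the proof of Lemma~\ref{lem-small}) with Lemma~\ref{lem7} and the pointwise bound $|H|\le M$, which gives $\lVert H\rVert_{L^2(s,s+T)}\le M\,T^{1/2}$, one obtains, for $y\in\mathcal{B}_1$,
\[
\lVert\Psi(y)\rVert_{\mathcal{B}_{s,s+T}}\le C_2\bigl(R+M\,T^{1/2}+c_4\,T^{1/4}\lVert y\rVert_{\mathcal{B}_{s,s+T}}^2\bigr)\le C_2\bigl(R+M\,T^{1/2}+9c_4C_2^2R^2\,T^{1/4}\bigr).
\]
Imposing $M\,T^{1/2}\le R$ and $9c_4C_2^2R^2\,T^{1/4}\le R$ makes the right-hand side at most $3C_2R=C_R$, so the self-mapping holds.

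The crucial simplification appears in the contraction estimate: since $H$ does not depend on the argument of $\Psi$, the difference $\Psi(y)-\Psi(z)$ solves \eqref{likdv} with \emph{zero} boundary datum $h\equiv 0$, zero initial datum, and source $-yy_x+zz_x$. Hence, by \eqref{l1} and the Lipschitz estimate \eqref{c4} of Lemma~\ref{lem7},
\[
\lVert\Psi(y)-\Psi(z)\rVert_{\mathcal{B}_{s,s+T}}\le C_2c_4\,T^{1/4}\bigl(\lVert y\rVert_{\mathcal{B}_{s,s+T}}+\lVert z\rVert_{\mathcal{B}_{s,s+T}}\bigr)\lVert y-z\rVert_{\mathcal{B}_{s,s+T}}\le 6c_4C_2^2R\,T^{1/4}\lVert y-z\rVert_{\mathcal{B}_{s,s+T}},
\]
which is a strict contraction once $6c_4C_2^2R\,T^{1/4}\le 1/2$. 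Defining $T(R,M)$ as the minimum of the finitely many thresholds above together with $1$, the Banach fixed point theorem yields a unique fixed point $y\in\mathcal{B}_1$; this is the desired solution, and it satisfies $\lVert y\rVert_{\mathcal{B}_{s,s+T(R,M)}}\le 3C_2R$ by construction.

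Finally, the fixed point argument only gives uniqueness inside $\mathcal{B}_1$. To upgrade to uniqueness among \emph{all} solutions of \eqref{nonli-bound} I would argue exactly as at the end of the proof of Lemma~\ref{lem-small}: any solution $y$ satisfies $\lVert y\rVert_{\mathcal{B}_{\tau,\tau+T'}}\le 3C_2\lVert y(\tau)\rVert_{L^2_L}$ for $T'$ small, so it lies locally in the relevant ball and local uniqueness propagates along the interval. I do not expect any genuine obstacle here: because $H$ is fixed and bounded, the contraction constant is controlled purely by the quadratic term $yy_x$ through \eqref{c4}, and the only point needing a little care is absorbing the $C_2M\,T^{1/2}$ contribution of the boundary data in the self-mapping step, which forces the threshold $T\le(R/M)^2$ and accounts for the dependence of $T(R,M)$ on $M$.
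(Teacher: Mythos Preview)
Your proposal is correct and is exactly the adaptation of the proof of Lemma~\ref{lem-small} that the paper intends: the paper itself does not spell out a proof of Lemma~\ref{lemma-small-H} but simply writes ``Proceeding similarly as in the above proof of Lemma~\ref{lem-small}, one can get the following lemma.'' Your observation that the contraction step simplifies (the boundary term drops out because $H$ is fixed, so no Lipschitz constant $K(R)$ is needed) is precisely the difference between the two situations, and your choice of thresholds matches those of \eqref{TR1}--\eqref{TR2} with $M$ in place of $C_B(C_R)$.
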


We are now in position to prove Theorem ~\ref{th-feedback-Lipschitz}.
\begin{proof}[Proof of Theorem ~\ref{th-feedback-Lipschitz}]
The uniqueness follows from the proof of the uniqueness part of Lemma~\ref{lem-small}. Let us give the proof of the existence. Let $y_0 \in L^2(0, L)$, let $s \in \mathbb{R}$ and let $T_0:=T(\lVert y_0 \lVert_{L^2_L})$ . By Lemma \ref{lem-small},  there exists a solution
$y \in \mathcal{B}_{s, s+ T_0}$
to the Cauchy problem \eqref{kdv-closed-loop-I}. Hence, together with the uniqueness of the solution, we can find a maximal solution $y:D(y)\to L^2(0,L)$ with $[s, s + T_0] \subset D(y)$. By the maximality of the solution $y$
and Lemma~\ref{lem-small}, there exists $\tau\in [s+T_0,+\infty)$ such that $D(y) = [s, \tau)$.  Let us assume that $\tau<+\infty$ and that \eqref{divergenceattau} does not hold.  Then there exists an increasing sequence $(t_n)_{n\in \mathbb{N}}$
of real numbers in $(s,\tau)$ and $R\in (0,+\infty)$ such that
\begin{gather}
\label{cvtnverstau}
\lim_{n\rightarrow +\infty}t_n=\tau,
\\
\label{convergenceattau}
\lVert y(t_n)\rVert_{L^2_L}\leq R,  \;\;\forall n \in \mathbb{N}.
\end{gather}
By \eqref{cvtnverstau}, there exists $n_0\in \mathbb{N}$ such that
\begin{gather}\label{n0grandR}
t_{n_0}\geq \tau-T(R)/2.
\end{gather}
 From Lemma~\ref{lem-small}, there is a solution $z: [t_{n_0},t_{n_0}+T(R)]\to L^2(0,L)$ of \eqref{kdv-closed-loop-I} for the initial time $s:=t_{n_0}$
 and the initial data $z(t_{n_0}):=y(t_{n_0})$. Let us then define $\tilde y :[s,t_{n_0}+T(R)]\to L^2(0,L)$ by
\begin{gather}\label{deftildeytpetit}
  \tilde y (t):=y(t),  \;\;\forall t \in [s,t_{n_0}], \\
  \tilde y (t):=z(t),  \;\;\forall t \in [t_{n_0},t_{n_0}+T(R)].
\end{gather}
Then $\tilde y$ is also a solution to the Cauchy problem \eqref{kdv-closed-loop-I}. By the uniqueness of this solution, we have
$y=\tilde y$ on $D(y)\cap D(\tilde y)$. However, from \eqref{n0grandR},  we have that $D(y) \subsetneqq D(\tilde y)$, in contradiction with the maximality of $y$.

Finally, we prove that, if $C(R)$ satisfies \eqref{Clinearlybounder}, then, for the maximal solution $y$ to \eqref{kdv-closed-loop-I}, we have $D(y) = [s, + \infty)$. We argue by contradiction and therefore assume that the maximal solution $y$ is
such that $D(y) = [s, \tau)$  with $\tau<+\infty$. Then \eqref{divergenceattau} holds. Let us estimate
$\lVert y(t) \lVert_{L^2_L}$ when $t$ tends to $\tau^{-}$.  We define the energy $E:[s,\tau)\to [0,+\infty)$ by
\begin{equation}\label{energy}
E(t) :=  \int_0^L | y(t, x) |^2 dx.
\end{equation}
Then $E\in C^0([s,\tau))$ and, in the distribution sense, it satisfies
\begin{equation}
\label{decay-energy-E-C}
\frac{d E}{d t} \leqslant  | u(t, y(t, \cdot)) |^2 \leqslant  C^2_{{\color{black} B}}(\sqrt{E}).
\end{equation}
(We get such estimate first in the classical sense for regular initial data and regular boundary conditions $y_x(t,L)=\varphi(t)$ with the related compatibility conditions; the general case then follows
from this special case by smoothing the initial data and the boundary conditions, by passing to the limit, and by using the uniqueness of the solution.)
From \eqref{divergenceattau} and  \eqref{decay-energy-E-C}, we get that
\begin{equation}\label{divpourE}
\frac{1}{2}\int_0^{+\infty } \frac{1}{C^2_{{\color{black} B}}(\sqrt{E})}dE <+\infty.
\end{equation}
However the left hand side of \eqref{divpourE} is equal to the left hand side of \eqref{Clinearlybounder}. Hence
 \eqref{Clinearlybounder} and \eqref{divpourE} are in contradiction.
This completes the proof of Theorem \ref{th-feedback-Lipschitz}.
\end{proof}

The proof of Theorem \ref{th-feedback-not-Lipschitz} is more difficult. For this proof,  we adapt a strategy introduced by Carath\'{e}odory to solve ordinary differential equations $\dot y=f(t,y)$ when $f$ is not smooth. Roughly speaking it consists in solving
$\dot y =f(t, y(t-h))$ where $h$ is a positive time-delay  and then let $h$ tend to $0$. Here we do not put the time-delay on $y$ (it does not seem to be possible) but only on the feedback law: $ u(t,y(t))$ is replaced by $ u(t,y(t-h))$.

\begin{proof}[Proof of Theorem \ref{th-feedback-not-Lipschitz}]
Let us  define $H: [0,+\infty) \rightarrow [0,+\infty)$ by
 \begin{equation}
 H(a) := \int_0^{a} \frac{1}{(C_B(\sqrt{E}))^2} dE=2\int_0^{{\color{black} \sqrt{a}}}\frac{R}{(C_B(R))^2} dR.
 \end{equation}
 From \eqref{Clinearlybounder}, we know that $H$ is a bijection from $[0,+\infty)$ into $[0,+\infty)$. We denote by $H^{-1}:[0,+\infty)\to [0,+\infty)$ the inverse of this map.

For  given $y_0 \in L^2(0, L)$ and $s\in \mathbb{R}$, let us prove that there exists a solution $y$ defined on $[s, + \infty)$ to the Cauchy problem \eqref{kdv-closed-loop-I}, which also satisfies
\begin{equation}\label{yes}
\lVert y(t) \lVert_{L^2(0, L)}^2 \leqslant  H^{-1}\left(H \left(\lVert y(s) \lVert_{L^2_L}^2\right) + (t - s)\right)  < + \infty,\;\;
\forall  t \in [s, + \infty).
\end{equation}

Let $n \in \mathbb{N}^{*}$. Let us  consider the following Cauchy system on $[s, s+ 1/n]$
\begin{gather}
\begin{cases}\label{sys0}
y_t+y_{xxx}+y_x+y y_x=0 \;\;\;\;\;&\textrm{in}\;\;  (s, s + (1/n)) \times (0,L), \\
y(t,0)= y(t,L)=0   &\textrm{on}\;\; (s, s +  (1/n)),\\
y_x(t,L)=u(t,y_0)    &\textrm{on}\;\; (s, s + (1/n)), \\
y(s,x)=y_0(x) &\textrm{on}\;\;  (0,L).
\end{cases}
\end{gather}
By Theorem~\ref{th-feedback-Lipschitz} applied with the feedback law $(t,y)\mapsto u(t,y_0)$
(a measurable bounded feedback law which now does not depend on $y$ and therefore satisfies \eqref{lip-condition-R}),
the Cauchy problem \eqref{sys0} has  one and only one solution $y$. Let us  now  consider the following Cauchy problem on $[s+(1/n), s+ (2/n)]$
\begin{gather}
\begin{cases}\label{sys-1-sur-n}
y_t+y_{xxx}+y_x+y y_x=0 \;\;\;\;\;&\textrm{in}\;\;  (s+(1/n), s+ (2/n)) \times (0,L), \\
y(t,0)= y(t,L)=0   &\textrm{on}\;\; (s+(1/n), s+ (2/n)),\\
y_x(t,L)=u(t,y(t-(1/n)))    &\textrm{on}\;\; (s+(1/n), s+ (2/n)), \\
y(s,x)=y_0(x) &\textrm{on}\;\;  (0,L).
\end{cases}
\end{gather}
As for \eqref{sys0}, this Cauchy problem  has one and only one solution, that we still denote by $y$. We keep going and, by induction
on the integer $i$, define $y\in C^0([s,+\infty);L^2(0,L))$ so that,
on $[s+(i/n), s+((i+1)/n)]$, $i\in \mathbb{N}\setminus\{0\}$, $y$ is the solution to the Cauchy problem
\begin{gather}
\begin{cases}\label{sys-i-sur-n}
y_t+y_{xxx}+y_x+y y_x=0 \;\;\;\;\;&\textrm{in}\;\;  (s+(i/n), s+((i+1)/n)) \times (0,L), \\
y(t,0)= y(t,L)=0   &\textrm{on}\;\; (s+(i/n), s+((i+1)/n)),\\
y_x(t,L)=u(t,y(t-(1/n)))    &\textrm{on}\;\; (s+(i/n), s+((i+1)/n)), \\
y(s+(i/n))=y(s+(i/n)-0) &\textrm{on}\;\;  (0,L),
\end{cases}
\end{gather}
where, in the last equation, we mean that the initial value, i.e. the value at time $(s+(i/n))$, is the value at time $(s+(i/n))$ of the $y$
defined previously on $[(s+((i-1)/n)), s+(i/n)]$.

Again, we let, for $t\in [s,+\infty)$,
\begin{equation}\label{energy-again}
E(t) :=  \int_0^L | y(t, x) |^2 dx.
\end{equation}
Then $E\in C^0([s,+\infty))$ and, in the distribution sense, it satisfies (compare with \eqref{decay-energy-E-C})
\begin{gather}
\label{decay-energy-E-C-0}
\frac{d E}{d t} \leqslant  | u(t, y_0) |^2 \leqslant  C^2_B(\sqrt{E(s)}), \, t\in (s, s+ (1/n)),
\\
\label{decay-energy-E-C-i}
\frac{d E}{d t} \leqslant  | u(t, y(t-(1/n)) |^2 \leqslant  C^2_B(\sqrt{E(t-(1/n))}), \, t\in (s+(i/n), s+((i+1)/n)), i>0.
\end{gather}
Let $\varphi :[0,+\infty)\to [0,+\infty)$ be the solution of
\begin{equation}\label{defvarphi}
\frac{d \varphi }{d t}= C_B^2\left(\sqrt{\varphi(t)}\right),\, \varphi(s)= E(s).
\end{equation}
Using \eqref{decay-energy-E-C-0}, \eqref{decay-energy-E-C-i}, \eqref{defvarphi} and simple comparaison arguments, one gets
 that
\begin{equation}\label{comparaison-E-varphi}
E(t)\leqslant \varphi (t), \;\; \forall t\in[s,+\infty),
\end{equation}
 i.e.
\begin{equation}
\label{decay-energy-E-C-final}
E(t)\leqslant H^{-1}\left(H\left(E(s)\right)+ (t-s)\right),\;\;\forall t\in [s, +\infty).
\end{equation}

We now want to let $n\rightarrow +\infty$. In order to show the dependance on $n$, we write $y^n$ instead of $y$.
In particular \eqref{decay-energy-E-C-final} becomes
\begin{equation}
\label{decay-energy-E-C-i-avec-n}
\lVert y^n(t) \lVert_{L^2(0, L)}^2 \leqslant  H^{-1}\left(H (\lVert y_0 (s) \lVert_{L^2_L}^2) + (t - s)\right)
,\;\;\forall t\in [s, +\infty).
\end{equation}
 From Lemma~\ref{lemma-small-H}, \eqref{decay-energy-E-C-i-avec-n} and the construction of $y^n$, we get that, for every $T>s$,
 there exists $M(T)>0$ such that
\begin{equation}
\label{estimate-y-n-B}
 \lVert y^n \lVert_{\mathcal{B}_{s,  T}} \leqslant M(T), \;\; \forall n\in \mathbb{N}.
\end{equation}
Hence, upon extracting a subsequence of $(y^n)_{n}$ that we still denote by $(y^n)_{n}$, there exists

\begin{equation}\label{property-y-limit}
y \in L^\infty_{\text{loc}}([s,+\infty);L^2(0,L))\cap L^2_{\text{loc}}([s,+\infty);H^1(0,L)),
\end{equation}
such that, for every $T>s$,
\begin{gather}
\label{weakLinftyL2}
y^n\rightharpoonup y \text{ in } L^\infty(s,T;L^2(0,L)) \text{ weak } * \text{ as } n\rightarrow +\infty,
\\
\label{weakL2H1}
y^n\rightharpoonup y \text{ in } L^2(s,T;H^1(0,L)) \text{ weak }  \text{ as } n\rightarrow +\infty.
\end{gather}
Let us define $z^n:[s,s+\infty)\times(0,L) \to \R$ and $\gamma^n:[s,+\infty)\to \R$ by
\begin{gather}
\label{defz-n-t-petit}
z^n(t):=y_0, \;\; \forall t \in [s,s+(1/n)],
\\
\label{defz-n-t-grand}
z^n(t):=y^n(t-(1/n)), \;\; \forall t\in (s+(1/n), +\infty),
\\
\label{def-gamma-n}
\gamma^n(t):=u(t,z^n), \;\; \forall t\in [s, +\infty).
\end{gather}
Note that $y^n$ is the solution to the Cauchy problem
\begin{gather}
\begin{cases}\label{sysyn}
y^n_t+y^n_{xxx}+y^n_x+y^n y^n_x=0 \;\;\;\;\;&\textrm{in}\;\;  (s,+\infty) \times (0,L), \\
y^n(t,0)= y^n(t,L)=0   &\textrm{on}\;\; (s,+\infty),\\
y^n_x(t,L)=\gamma^n(t)    &\textrm{on}\;\; (s,+\infty), \\
y^n(s,x)=y_0(x) &\textrm{on}\;\; (0,L).
\end{cases}
\end{gather}
 From \eqref{estimate-y-n-B} and the first line of \eqref{sysyn}, we get that
\begin{equation}\label{esap1}
\forall T>0, \, \left( \frac{d}{d t} y^n \right)_{n\in \mathbb{N}}\;\; \textrm{ is bounded in } \;\;  L^2(s, s+T ; H^{-2}(0,L)).
\end{equation}
 From \eqref{weakLinftyL2}, \eqref{weakL2H1}, \eqref{esap1} and the Aubin-Lions Lemma \cite{1963-Aubin-CRAS}, we get that
 \begin{equation}\label{strong}
 y^n \rightarrow y \;\; \textrm{in} \;\; L^2(s,T; L^2(0,L))
\text{ as } n\rightarrow +\infty,\;\;\forall T >s.
\end{equation}
From \eqref{strong} we know that, upon extracting a subsequence if necessary, a subsequence still denoted by $(y^n)_n$,
\begin{equation}\label{yny}
\lim_{n\rightarrow +\infty} \lVert y^n (t) -y(t) \lVert_{L^2_L}=0, \text{ for almost every } t \in (s,+\infty).
\end{equation}
 Letting $n\rightarrow +\infty$ in inequality \eqref{decay-energy-E-C-final} for $y^n$ and using \eqref{yny}, we get that
 \begin{equation}\label{ineq-y-OK}
\lVert y(t) \lVert_{L^2(0, L)}^2 \leqslant  H^{-1}\left(H (\lVert y_0 \lVert_{L^2_L}^2) + (t - s)\right),
\text{ for almost every } t\in (0,+\infty).
\end{equation}
 Note that, for every $T>s$,
\begin{equation}\label{est-intermediaire-z-n-y}
\begin{array}{rcl}
\displaystyle
\lVert z^n -y \lVert_{L^2((s,T);L^2_L)}&\leq&
(1/\sqrt{n})\lVert y_0 \lVert_{L^2_L}
+
\lVert y^n(\cdot-(1/n))-y(\cdot-(1/n))\lVert_{L^2(s+(1/n),T;L^2(0,L))}
\\
&&
\displaystyle
+
\lVert y(\cdot-(1/n))-y(\cdot)\lVert_{L^2(s+(1/n),T;L^2(0,L))}
+
\lVert y \lVert_{L^2(s,s+(1/n);L^2(0,L))}
\\
&\leq&
\displaystyle
(1/\sqrt{n})\lVert y_0 \lVert_{L^2_L}
+
\lVert y^n-y\lVert_{L^2(s,T;L^2(0,L))}
\\
&&
\displaystyle
+
\lVert y(\cdot-(1/n))-y(\cdot)\lVert_{L^2(s+(1/n),T;L^2(0,L))}
+
\lVert y(\cdot)\lVert_{L^2(s,s+(1/n);L^2(0,L))}.
\end{array}
\end{equation}
From \eqref{defz-n-t-petit}, \eqref{defz-n-t-grand}, \eqref{strong} and \eqref{est-intermediaire-z-n-y}, we get that
\begin{equation}\label{strongzn}
z^n \rightarrow y \;\; \textrm{in} \;\; L^2(s,T; L^2(0,L))
\text{ as } n\rightarrow +\infty,\;\;\forall T >s.
\end{equation}
 Extracting, if necessary, from the sequence $ (z^n)_n$ a subsequence, a subsequence still denoted $(z^n)_n$,
 and using \eqref{strongzn}, we have
\begin{equation}\label{gamma}
\lim_{n\rightarrow +\infty} \lVert z^n (t) -y(t) \lVert_{L^2_L} = 0, \text{ for almost every } t\in (s,+\infty).
\end{equation}
 From \eqref{locallybounded}, \eqref{measurable}, \eqref{almostcontinuity}, \eqref{estimate-y-n-B}, \eqref{defz-n-t-petit}, \eqref{defz-n-t-grand} and \eqref{gamma},
extracting a subsequence  from the sequence $(\gamma ^n)_n$ if necessary, a subsequence still denoted $(\gamma ^n)_n$, we may assume that
\begin{equation}\label{cvgamman}
\gamma^n \rightharpoonup \gamma(t):=u(t,y(t)) \text{ in } L^\infty(s,T) \text{ weak } *
\text{ as } n\rightarrow +\infty,\;\;\forall T>s.
\end{equation}
Let us now check that
\begin{equation}\label{eq-y-solution}
  \text{$y$ is a solution to the Cauchy problem \eqref{kdv-closed-loop-I}. }
\end{equation}
Let $\tau\in [s,+\infty)$ and let
$\phi \in C^3([s, \tau] \times [0,L])$ be such that
\begin{equation}
\phi (t,0) = \phi (t,L) = \phi_x (t,0) = 0, \;\;\forall t \in [T_1,\tau].
\end{equation}
 From \eqref{sysyn}, one has, for every $n\in \mathbb{N}$,
\begin{align}
\label{y-n-solution-integral}
-\int_{T_1}^{\tau} \int_0^L &(\phi_t + \phi_x + \phi_{xxx})y^n dxdt - \int_{T_1}^{\tau} \gamma^n \phi_x (t,L) dt + \int_{T_1}^{\tau} \int_0^L \phi y^ny^n_x dxdt  \notag\\
 & + \int_0^L y(\tau,x)\phi(\tau,x) dx - \int_0^L y_0 \phi(s,x) dx = 0.
\end{align}
Let $\tau $ be such that
\begin{equation}\label{ynytau}
\lim_{n\rightarrow +\infty} \lVert y^n (\tau) -y(\tau) \lVert_{L^2_L}=0.
\end{equation}
Let us recall that, by \eqref{yny}, \eqref{ynytau} holds for almost every $\tau\in [s,+\infty)$. Using \eqref{weakL2H1}, \eqref{strong},
\eqref{cvgamman}, \eqref{ynytau} and letting $n\rightarrow +\infty $ in \eqref{y-n-solution-integral}, we get
\begin{align}
\label{y-solution-integral}
-\int_{T_1}^{\tau} \int_0^L &(\phi_t + \phi_x + \phi_{xxx})y dxdt - \int_{T_1}^{\tau} u(t,y(t)) \phi_x (t,L) dt + \int_{T_1}^{\tau} \int_0^L \phi yy_x dxdt  \notag\\
 & + \int_0^L y(\tau,x)\phi(\tau,x) dx - \int_0^L y_0 \phi(s,x) dx = 0.
\end{align}
This shows that $y$ is a solution to \eqref{likdv}, with $T_1:=s$, $T_2$ arbitrary in $(s,+\infty)$, $\tilde{h}:=-yy_x\in L^1([s,T_2]; L^2(0,L))$ and
$h=u(\cdot,y(\cdot))\in L^2(s,T_2)$. Let us emphasize that, by Lemma~\ref{lem1}, it also implies that $y\in \mathcal{B}_{s,T} $ for every
 $T\in (s,+\infty)$. This concludes
the proof of \eqref{eq-y-solution} and  of  Theorem~\ref{th-feedback-not-Lipschitz}.
\end{proof}

\section{Proof of Proposition \ref{prop-order-2-H1-au-lieu-de-L2}}
\label{sec-appendix-C}
Let us first recall that Proposition \ref{prop-order-2-H1-au-lieu-de-L2} is due to Eduardo Cerpa if one requires only $u$ to be in $L^2(0,T)$ instead of being in $H^1(0,T)$ : see \cite[Proposition 3.1]{cerpa07} and \cite[Proposition 3.1]{cerpa09}.
In his proof, Eduardo Cerpa  uses Lemma \ref{lem4}, the controllability in $H$ with controls $u \in L^2$. Actually, the only place in Eduardo Cerpa's proof where the controllability in $H$ is used is in page 887 of \cite{cerpa07} for  the construction of $\alpha_1$, where, with the notations of \cite{cerpa07} $\Re(y_{\lambda})$, $\Im(y_{\lambda}) \in H$. We notice that $\Re(y_{\lambda})$, $\Im(y_{\lambda})$ share more regularity and better boundary conditions. Indeed, one has
 \begin{gather*}\label{ylam}
 \begin{cases}
 \lambda y_{\lambda} + y_{\lambda}^{'} +  y_{\lambda}^{'''} = 0,\\
 y_{\lambda}(0) = y_{\lambda}(L) = 0,
 \end{cases}
 \end{gather*}
which implies that
 \[  \Re (y_{\lambda}), \Im (y_{\lambda})  \in \mathcal{H}^3,            \]
where
 \begin{equation}
 \mathcal{H}^3 := H \cap \{  \omega \in H^3(0,L); \omega(0) = \omega(L) = 0   \}.
\end{equation}
In order to adapt Eduardo Cerpa's proof in the framework of $u \in H^1(0,T)$, it is  sufficient to prove  the following controllability result in $\mathcal{H}^3$ with control $u \in H^1(0,T)$.
 \begin{proposition}\label{prop-controllability-with-H1-control}
For every $y_0$, $y_1 \in \mathcal{H}^3$ and for every $T >0$,
there exists a control $u \in H^1(0,T)$ such that the solution $y \in \mathcal{B}$ to the Cauchy problem
\begin{gather*}
\begin{cases}
y_t+y_{xxx}+y_x = 0,\\
y(t,0)= y(t,L)=0, \\
y_x(t,L)= u(t),
\\
y(0,\cdot) = y_0,
\end{cases}
\end{gather*}
satisfies $y(T,\cdot) = y_1$.
 \end{proposition}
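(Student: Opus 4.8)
The plan is to deduce Proposition~\ref{prop-controllability-with-H1-control} from the $L^2$-controllability of Lemma~\ref{lem4} by a differentiation-in-time trick: an $H^1(0,T)$ control $u$ for the state $y$ should correspond, after differentiating the equation with respect to $t$, to an $L^2(0,T)$ control $v=u'$ for the state $z:=y_t$, and Lemma~\ref{lem4} is exactly what steers $z$. First I would record that $\mathcal{A}$ maps $\mathcal{H}^3$ into $H$. Indeed, for $w\in\mathcal{H}^3$ and $\psi\in M$, three integrations by parts (using $w(0)=w(L)=0$ and $\psi(0)=\psi(L)=\psi_x(0)=\psi_x(L)=0$) give
\begin{equation*}
\langle \mathcal{A}w,\psi\rangle_{L^2_L}=-\langle w,\psi_x+\psi_{xxx}\rangle_{L^2_L},
\end{equation*}
and by \eqref{varj} the map $\psi\mapsto\psi_x+\psi_{xxx}$ sends $M$ into $M$; since $w\in H=M^\perp$, the right-hand side vanishes, so $\mathcal{A}w\in H$. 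In particular $\mathcal{A}y_0,\mathcal{A}y_1\in H$.

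Next I would apply Lemma~\ref{lem4} to obtain $v\in L^2(0,T)$ steering the linear system from $\mathcal{A}y_0$ to $\mathcal{A}y_1$: the solution $z$ of \eqref{likdv} with $z(0)=\mathcal{A}y_0$, $\tilde h=0$, $h=v$ satisfies $z(T)=\mathcal{A}y_1$. I then set $u(t):=y_{0x}(L)+\int_0^t v(s)\,ds$, so that $u\in H^1(0,T)$, $u'=v$ and $u(0)=y_{0x}(L)$, and let $y\in\mathcal{B}$ be the solution of the controlled linear KdV equation with this $u$ and $y(0)=y_0$ (Lemma~\ref{lem1}). Since $y_0\in H^3$ satisfies the compatibility relation $y_{0x}(L)=u(0)$, the solution is regular enough near $t=0$ that $y_t(0)=\mathcal{A}y_0$; differentiating the equation in time, $y_t$ solves the same linear KdV equation as $z$, with Neumann datum $y_{tx}(\cdot,L)=u'=v$ and initial datum $\mathcal{A}y_0$, so $y_t=z$ by the uniqueness in Lemma~\ref{lem1}. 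In particular $y_t(T)=z(T)=\mathcal{A}y_1\in L^2(0,L)$.

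It remains to turn this into $y(T)=y_1$. Using $y_t(T)\in L^2(0,L)$, the function $y(T,\cdot)$ solves the third-order linear ODE in $x$
\begin{equation*}
y_{xxx}(T,\cdot)+y_x(T,\cdot)=-y_t(T,\cdot)\in L^2(0,L),
\end{equation*}
so $y(T)\in H^3(0,L)$; together with $y(T,0)=y(T,L)=0$ and the invariance of $H$ (which gives $y(t)\in H$ for all $t$, as $y_0\in H$), this yields $y(T)\in\mathcal{H}^3$. Reading the equation at $t=T$ as $y_t(T)=\mathcal{A}y(T)$ and comparing with $y_t(T)=\mathcal{A}y_1$ gives
\begin{equation*}
\mathcal{A}\big(y(T)-y_1\big)=0,\qquad y(T)-y_1\in\mathcal{H}^3 .
\end{equation*}

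The hard part will be the last step. The difference $w:=y(T)-y_1$ solves $w'+w'''=0$ with only the two Dirichlet conditions $w(0)=w(L)=0$, and the space of such $w$ is one-dimensional, spanned by the explicit function $w_0=\sin L\,(\cos x-1)+(1-\cos L)\sin x$; hence a priori $y(T)=y_1+\mu w_0$ for some scalar $\mu$. The key point is that $w_0\notin H$ (equivalently, $\ker\mathcal{A}$ is trivial on $\mathcal{H}^3$): $w_0$ is a single Fourier mode of frequency $1$, which for $L\in\mathcal{N}_2\cup\mathcal{N}_3$ is not resonant with the frequencies entering $M$, so one checks $P_M(w_0)\neq 0$. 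Since $w=\mu w_0$ lies in $H$ while $w_0\notin H$, this forces $\mu=0$, i.e. $y(T)=y_1$, completing the proof. I expect verifying $w_0\notin H$ — the triviality of the pure-Dirichlet kernel of $\mathcal{A}$ inside $H$ — to be the one point requiring genuine care (the $H^3$ bootstrap being routine), since everything else reduces to direct applications of Lemma~\ref{lem4} and the uniqueness in Lemma~\ref{lem1}.
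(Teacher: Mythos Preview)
Your differentiation-in-time strategy is natural and close in spirit to what the paper does, but the final step fails: the claim $w_0\notin H$ is false for $L\in\mathcal{N}_2\cup\mathcal{N}_3$. Indeed, any $w\in H^3(0,L)$ with $w'+w'''=0$ and $w(0)=w(L)=0$ automatically lies in $H$: integrating by parts against $\varphi^j$ (using $\varphi^j(0)=\varphi^j(L)=(\varphi^j)'(0)=(\varphi^j)'(L)=0$ from \eqref{varj}) gives
\begin{equation*}
0=\int_0^L(w'+w''')\overline{\varphi^j}\,dx=-\int_0^L w\,\overline{\big((\varphi^j)'+(\varphi^j)'''\big)}\,dx=i\omega^j\int_0^L w\,\overline{\varphi^j}\,dx,
\end{equation*}
and since $\omega^j\neq 0$ for every $j$ when $L\in\mathcal{N}_2\cup\mathcal{N}_3$ (all pairs have $l_j>k_j$), this forces $w\perp M$. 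Thus $w_0\in H$, and the relation $y(T)-y_1=\mu w_0$ with both sides in $H$ places no constraint on $\mu$. (Your heuristic that $w_0$ is ``non-resonant with $M$'' in fact points toward $w_0\perp M$, the opposite of what you wanted.)

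What is missing is the compatibility at $t=T$: you imposed $u(0)=y_{0x}(L)$ but left $u(T)$ free. Since $w_x(L)=u(T)-y_{1x}(L)$ and $w_0'(L)=\cos L-1\neq 0$ (for $L\in\mathcal{N}_2\cup\mathcal{N}_3$ one has $L\neq 2m\pi$, because any such $L$ would admit the pair $(m,m)$ and hence lie in $\mathcal{N}_1\cup\mathcal{N}_4$), the scalar $\mu$ equals $(u(T)-y_{1x}(L))/(\cos L-1)$ and vanishes iff $u(T)=y_{1x}(L)$. To salvage the argument you would need $v\in L^2(0,T)$ that both steers $z$ from $\mathcal{A}y_0$ to $\mathcal{A}y_1$ \emph{and} satisfies $\int_0^T v=y_{1x}(L)-y_{0x}(L)$; this extra scalar constraint is not delivered by a bare invocation of Lemma~\ref{lem4}. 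The paper sidesteps the issue by appealing to \cite[Proposition~10.3.4]{2009-Tucsnak-Weiss-book}, which handles both endpoint compatibilities at once and reduces everything to showing that $\mathcal{A}:\mathcal{D}(\mathcal{A})\cap H\to H$ is onto --- a fact verified via the very same integration-by-parts identity (see \eqref{scal-0}).
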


 The proof of Proposition \ref{prop-order-2-H1-au-lieu-de-L2} is the same as the one of \cite[Proposition 3.1]{cerpa07}, with the only difference  that one uses  Proposition \ref{prop-controllability-with-H1-control} instead of Lemma \ref{lem4}.

\begin{proof}[Proof of Proposition~\ref{prop-controllability-with-H1-control}]
Let us first point out that $0$ is not an eigenvalue of the operator $\mathcal{A}$. Indeed this follows from Property \hyperref[P2]{($\mathcal{P}_2$)}, \eqref{varj} and \eqref{eq-omega}. Using Lemma~\ref{lem4} and
\cite[Proposition 10.3.4]{2009-Tucsnak-Weiss-book} with $\beta=0$, it suffices to check that

\begin{equation}\label{solvability}
\text{for every $f \in H$, there exists $y\in \mathcal{H}^3$ such that $-y_{xxx}-y_x=f$}.
\end{equation}
Let $f\in H$. We know that there exists $y\in H^3(0,L)$ such that
\begin{gather}
\label{equation-y-reg}
-y_{xxx}-y_x=f,
\\
\label{condition-bord-y-reg}
y(0)=y(L)=y_x(L)=0.
\end{gather}
Simple integrations by parts, together with \eqref{varphi}, \eqref{varphi2}, \eqref{equation-y-reg} and \eqref{condition-bord-y-reg},  show that, with $\varphi:=\varphi_1+i\varphi_2$,
\begin{equation}\label{scal-0}
0=\int_0^Lf\varphi  dx = \int_0^L(-y_{xxx}-y_x)\varphi dx =\int_0^Ly(\varphi_{xxx}+\varphi_{x})dx =
i\frac{2\pi}{p}\int_0^Ly\varphi dx,
\end{equation}
which, together with \eqref{condition-bord-y-reg}, implies that $y\in \mathcal{H}^3$. This concludes the proof of
  \eqref{solvability} as well as the proof of Proposition~\ref{prop-controllability-with-H1-control}
  and of Proposition~\ref{prop-order-2-H1-au-lieu-de-L2}.

\end{proof}

\end{document}